\documentclass[reqno,twoside,11pt,english]{amsart}
\usepackage{amsmath,amsfonts,amssymb,amsthm,epsfig}
\newcommand{\RNum}[1]{\uppercase\expandafter{\romannumeral #1\relax}}
\usepackage{comment}

\usepackage{color}

\usepackage[pagebackref,colorlinks,citecolor=blue,linkcolor=blue]{hyperref}
\usepackage{graphicx}
\usepackage{titletoc,epsf}
\usepackage{amsmath,amsfonts,latexsym,amsthm,amsxtra,amssymb,bbm}
\allowdisplaybreaks
\usepackage{graphicx,float}

\allowdisplaybreaks

\let\th=T
\let\pa=\partial

\def\h{\frak h}

\newtheorem{theorem}{Theorem}[section]
\newtheorem{lemma}[theorem]{Lemma}
\newtheorem{proposition}[theorem]{Proposition}
\newtheorem{corollary}[theorem]{Corollary}
\newtheorem{definition}[theorem]{Definition}
\newtheorem{remark}[theorem]{Remark}

\numberwithin{equation}{section}

\numberwithin{equation}{section}

\topmargin - .23 in

\begin{document}

\title[GWP of NLS on hyperbolic spaces]{Global well-posedness and scattering of energy-critical defocusing nonlinear Schr\"odinger equation on high-dimensional hyperbolic spaces}

\author{Shunqin Zhang, Jiqiang Zheng, and Yunrui Zheng}

\address[S. Zhang]{School of mathematics, Hohai University, Nanjing 210098, Jiangsu,  China { and Laboratory of Mathematical Modeling and Intelligent Computing for Water Systems, Hohai University, Nanjing, 211100, China}}
\email{{\tt 250219010006@hhu.edu.cn}}

\address[J. Zheng]{Institute of applied physics and computational mathematics and national key laboratory of computational physics, Beijing 100088, China}
\email{{\tt   zheng\_jiqiang@iapcm.ac.cn}}

\address[Y. Zheng]{ School of mathematics, Shandong University, Shandong 250100, Jinan,   China.}
\email{{\tt yunrui\_zheng@sdu.edu.cn}}

\thanks{S. Zhang has been partially supported by the NSF of Jiangsu Province (Grant No.~BK20221497). J. Zheng is supported by National Key R\&D program of China: 2021YFA1002500 and NSF Grant of China (No.~12271051). Y. Zheng is  partially supported by the NSF of
	Shandong Province (No.~ZR2026MS0070) and the National Natural Science Foundation of China( No.~11901350 and No.~12371172).}

\begin{abstract}
We prove global well-posedness and scattering for the energy-critical defocusing nonlinear Schr\"odinger equation
\begin{equation*}
\begin{cases} 
	i\partial_t u + \Delta u = |u|^{\frac{4}{d-2}} u, \\ 
	u(0) = u_0,
\end{cases}
\end{equation*}
on hyperbolic space $\mathbb{H}^d(d \geq 4)$. We mainly utilize the concentration-compactness/rigidity method. We establish a linear profile decomposition on high-dimensional hyperbolic spaces and prove that the nonlinear profile on hyperbolic spaces can be approximated by the solutions of the energy-critical defocusing nonlinear Schr\"odinger equation on Euclidean spaces, then obtaining the global well-posedness and scattering of the energy-critical defocusing nonlinear Schr\"odinger equation on the high-dimensional hyperbolic space.

\bigskip

\noindent \textbf{Keywords}: Nonlinear Schrödinger equation, scattering, hyperbolic space, energy-critical, concentration-compactness/rigidity method.

\bigskip

\noindent {\textbf{Mathematics Subject Classification (2020)} Primary: 35L05; Secondary: 35L71, 35Q40, 35P25, 35B40}
\end{abstract}

\maketitle

\section{Introduction}

In this article, we consider the nonlinear Schrödinger equation (NLS) posed on the hyperbolic space $\mathbb{H}^d$ when $d \geq 4$, which is the unique simply connected complete Riemannian manifold with a constant negative sectional curvature $-1$: 
\begin{align}\label{eq1.1v1} 
i\partial_t u + \Delta_g u = |u|^{\frac{4}{d-2}} u, \quad (t,x) \in \mathbb{R} \times \mathbb{H}^d,
\end{align} 
where the unknown function $u : \mathbb{R} \times \mathbb{H}^d \to \mathbb{C}$. Here, $\Delta_g$ is the Laplace-Beltrami operator on $\mathbb{H}^d$. This equation is the Hamiltonian flow of the energy functional:
\begin{align}\label{eq1.1v2} 
M(u(t)) = \int_{\mathbb{H}^d} |u(t)|^2 \, d\mu, 
\quad E(u(t)) = \frac{1}{2} \int_{\mathbb{H}^d} |\nabla_g u(t)|^2 \, d\mu + \frac{d-2}{2d} \int_{\mathbb{H}^d} |u(t)|^{\frac{2d}{d-2}} \, d\mu.
\end{align} 
The nonlinear Schrödinger equation is an important model in physics, with rich applications in nonlinear optics, plasma physics, and quantum field theory. In particular, it describes phenomena such as the refraction of electromagnetic waves through media under certain field strengths, the propagation of electromagnetic fields in plasma, and condensation \cite{Akhmanov1966,Chiao1964,Kelley1965}. Additionally, the inviscid limit of the complex Ginzburg-Landau equation leads to the nonlinear Schrödinger equation. For further reading on the complex Ginzburg-Landau equation and nonlinear Schrödinger equation, see \cite{Cheng2026a,Cheng2025b,Cheng2025a,Cheng2025c,Li2025,Liu2025,Yang2018,Zhang2026}. 

The global well-posedness and scattering of the nonlinear Schrödinger equation on hyperbolic spaces have been extensively studied over the past two decades. J.-P. Anker and V. Pierfelice \cite{Anker2009}, V. Banica \cite{Banica2007}, and V. Banica, R. Carles, and G. Staffilani \cite{Banica2008} proved the local well-posedness and small data scattering for the nonlinear Schrödinger equation on $\mathbb{H}^d$. V. Banica, R. Carles, and T. Duyckaerts \cite{Banica2009a}, V. Banica and T. Duyckaerts \cite{Banica2007b,Banica2015}, and A. Ionescu and G. Staffilani \cite{Ionescu2009} consider
 the energy-subcritical defocusing/focusing case on $\mathbb{H}^d$ and established global well-posedness and scattering for the nonlinear Schrödinger equation. In \cite{Ionescu2012}, A. D. Ionescu, B. Pausader, and G. Staffilani proved global well-posedness and scattering for the energy-critical defocusing nonlinear Schrödinger equation on $\mathbb{H}^3$ using the concentration-compactness/rigidity argument. In this paper, we aim to extend their result to higher dimensions $d\geq4$ as follows:

\begin{theorem}\label{th1v1} 
\emph{(Global well-posedness and scattering.)} If $\phi \in H^1(\mathbb{H}^d)$, then there exists a unique global solution $u \in C(\mathbb{R}; H^1(\mathbb{H}^d))$ of the initial-value problem
\begin{align}\label{eq1.1v3} 
\begin{cases} 
i\partial_t u + \Delta_g u = |u|^{\frac{4}{d-2}} u, & (t,x) \in \mathbb{R} \times \mathbb{H}^d, \\ 
u(0) = \phi. 
\end{cases} 
\end{align} 
In addition, the mapping $\phi \to u$ is continuous from $H^1(\mathbb{H}^d)$ to $C(\mathbb{R}; H^1(\mathbb{H}^d))$, and the quantities $M(u)$ and $E(u)$ defined in \eqref{eq1.1v2} are conserved. 
	
Moreover, we have the bound
\begin{align}\label{eq1.1v4} 
\|u\|_{L^{\frac{2(d+2)}{d-2}}(\mathbb{R} \times \mathbb{H}^d)} \leq C(\|\phi\|_{H^1(\mathbb{H}^d)}). 
\end{align} 	
As a consequence, there exist unique \(u_\pm \in H^1(\mathbb{H}^d)\) such that
\begin{align}\label{eq1.1v5} 
\|u(t) - e^{it\Delta_g} u_\pm\|_{H^1(\mathbb{H}^d)} \to 0, \quad \text{as } t \to \pm\infty
\end{align} 
in hyperbolic space $\mathbb{H}^d$. 
\end{theorem}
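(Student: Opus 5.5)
We follow the concentration-compactness/rigidity scheme of Ionescu--Pausader--Staffilani for $\mathbb{H}^3$, adapted to $d\ge4$. \textbf{Step 1 (local theory and stability).} Using the Strichartz estimates on $\mathbb{H}^d$ of Anker--Pierfelice and Banica (which hold for a wider range of admissible pairs than in $\mathbb{R}^d$, with no loss of derivatives), we will prove local well-posedness in $H^1(\mathbb{H}^d)$ and small-data global existence and scattering. We will also prove a long-time perturbation (stability) lemma in the norm $Z(I)=L^{\frac{2(d+2)}{d-2}}_{t,x}$ together with an $H^1$-Strichartz control. For $d\ge 4$ the nonlinearity $|u|^{4/(d-2)}u$ is only $C^{1,\frac{4}{d-2}}$ (with Hölder exponent $<1$ when $d>6$). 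The stability lemma will therefore be done as in Tao--Visan/Killip--Visan, through an exotic Strichartz space or fractional chain rule, rather than by differentiating the nonlinearity twice. Once stability holds, global existence with the bound \eqref{eq1.1v4} implies scattering \eqref{eq1.1v5} by the standard Duhamel argument, and continuity of $\phi\mapsto u$ follows from stability. Conservation of $M,E$ is standard.

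\textbf{Step 2 (Euclidean profiles).} We define $\Lambda(L)=\sup\{\|u\|_{Z(I)}: E(u)\le L\}$ and $E_{\max}=\sup\{L:\Lambda(L)<\infty\}$. Suppose $E_{\max}<\infty$. The key new input is a linear profile decomposition for bounded sequences in $H^1(\mathbb{H}^d)$ with respect to $e^{it\Delta_g}$. Profiles are of two kinds. The first are Euclidean profiles concentrating at scale $N_k\to\infty$ around points $h_k\in\mathbb{H}^d$ (realised by transplanting $N_k^{(d-2)/2}\phi(N_k\cdot)$ via exponential coordinates and isometries $h_k$). The second are scale-one profiles $e^{it_k\Delta_g}(h_k\cdot\phi)$, with $t_k$ either $0$ or $\to\pm\infty$. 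The decomposition comes with orthogonality of parameters, asymptotic decoupling of the $H^1$ norm and energy, and smallness of the remainder in $e^{it\Delta_g}$ in the $Z$ norm. We will construct it by the refined Strichartz/inverse Strichartz inequality: a solution with $\|e^{it\Delta_g}f\|_Z\ge\epsilon$ must have a frequency-localised bump of size $\gtrsim \epsilon^{C}$. This uses the Fourier/harmonic analysis on $\mathbb{H}^d$ (Helgason transform, Littlewood--Paley via the heat semigroup) and the isometry group acting transitively. For the Euclidean profiles we must show the nonlinear hyperbolic evolution is approximated by the Euclidean solution, which exists globally with finite $Z$ norm by Colliander--Keel--Staffilani--Takaoka--Tao and Ryckman--Visan/Visan. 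We plan to do this by taking $\chi(N_k^{-1/2}\cdot)$-truncated, rescaled Euclidean solutions up to times $T N_k^{-2}$ and comparing via stability. For large times we use that after the Euclidean solution has scattered, its linear hyperbolic evolution is small in $Z$ by dispersion (the hyperbolic linear flow on profiles at frequency $N_k$ matches the Euclidean one up to errors vanishing as curvature effects scale out). We expect this to be the \textbf{main obstacle}: controlling the error terms requires local smoothing or extinction lemmas for $e^{it\Delta_g}$ on high-frequency, spatially concentrated data, and in $d\ge4$ the nonlinear error estimates need the fractional calculus mentioned above. Scale-one profiles with $t_k\to\pm\infty$ are handled via wave operators (small data theory near infinity).

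\textbf{Step 3 (critical element and rigidity).} Combining Steps 1--2 with the stability lemma gives the usual dichotomy. Either the sequence has one profile, or all nonlinear profiles have energy $<E_{\max}$ and hence finite $Z$ norm, contradicting the blow-up of $Z$. Euclidean and time-divergent profiles are excluded because they would also give finite $Z$ norm. Hence we obtain a critical element $u$ with $E(u)=E_{\max}$, infinite $Z$ norm in both directions, and $\{u(t)\}$ precompact in $H^1(\mathbb{H}^d)$ modulo the isometry group, i.e. up to translations $h(t)$ only, with no scaling. Rigidity then follows from a Morawetz-type inequality on $\mathbb{H}^d$. We plan to use the weight $a(r)=r$, or $\log\cosh r$, whose Hessian is positive and whose bilaplacian has a good sign. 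This gives the interaction/virial estimate $\int_I\int_{\mathbb{H}^d}|u|^{2d/(d-2)}\,d\mu\,dt\lesssim \sup_t\|u\|_{H^1}^2$, uniformly in $I$. By compactness, $\int|u(t)|^{2d/(d-2)}d\mu\ge c>0$ for all $t$, and this lower bound contradicts the Morawetz bound as $|I|\to\infty$. Hence $E_{\max}=\infty$, which gives \eqref{eq1.1v4}.
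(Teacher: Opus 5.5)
Your proposal follows essentially the same route as the paper. It uses local theory and stability (with Tao--Visan exotic Strichartz spaces for large $d$), a profile decomposition into Euclidean profiles at scales $N_k\to\infty$ and scale-one hyperbolic profiles, and approximation of concentrating nonlinear profiles by rescaled Ryckman--Visan/Visan solutions via stability and local smoothing; it then extracts a critical element compact modulo isometries and contradicts the Ionescu--Staffilani-type Morawetz estimate with a uniform lower bound on $\|u(t)\|_{L^{2d/(d-2)}}$. The one step you leave implicit, which the paper argues explicitly, is that the critical element is global (compactness modulo isometries gives a uniform local existence time); this is needed before letting $|I|\to\infty$ in the Morawetz step.
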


\subsection{Methodology}

In Euclidean spaces, the concentration compactness/rigidity method has been developed for both subcritical and critical problems, for instance, \cite{Bourgain1999, Cazenave2003, Colliander2008, Grillakis2000, Kenig2006, Ryckman2007, Tao2006, Visan2007}. Subsequently, this method has been applied to other large-data critical dispersive problems in hyperbolic spaces, such as \cite{Anker2009, Cheng2025c}. We continue to adopt this approach. The proof strategy of the main theorem uses linear profile decomposition and the rigidity theorem. To prove linear profile decomposition, we need the heat kernel and to establish heat kernel approximation for general dimensions.  The heat kernels are much complicated due to different dimensions. Here we give a very comprehensive analysis for the asymptotics of the heat kernels on $\mathbb{H}^d$ as the limit $k \to \infty$ for the profile scale $N_k$, compared to the heat kernels on $\mathbb{R}^d$. 

For critical problems, the global geometry of the manifold cannot provide any improvements. To illustrate this, consider data of the form
\begin{align}\label{eq1.1v6} 
\phi_N(x) = N^{\frac{d-2}{2}} \phi(N\Psi_I^{-1}(x)), 
\end{align} 
where $\phi \in C_0^\infty(\mathbb{R}^d)$ is fixed. Assuming $\phi$ is fixed and letting $N \to \infty$, the functions $\phi_N \in C_0^\infty(\mathbb{H}^d)$ have uniformly bounded $H^1$ norms.

Let $v$ be the solution on $(-T, T)$ of the Euclidean nonlinear Schr\"odinger equation(NLS)
\begin{align}\label{eq1.1v8} 
\begin{cases} 
i \partial_t v + \Delta v = |v|^{\frac{4}{d-2}} v, & (t, x) \in \mathbb{R} \times \mathbb{R}^d, \\ 
v(0) = \phi. 
\end{cases} 
\end{align} 
For any fixed $T \geq 0$, we can prove that $v$ corresponding to initial data $\phi_N$ is well approximated by
\begin{align}\label{eq1.1v7} 
N^{\frac{d-2}{2}} v(N\Psi_I^{-1}(x), N^2 t) 
\end{align} 
on the time interval $(-TN^{-2}, TN^{-2})$. In other words, the solution of the hyperbolic NLS \eqref{eq1.1v3} with data $\phi_N$ can be regular on the interval $(-TN^{-2}, TN^{-2})$ only if the solution of the Euclidean NLS \eqref{eq1.1v8} is regular on $(-T, T)$. This shows that understanding the Euclidean scale-invariant problem is a prerequisite for understanding the problem on any other manifold. Fortunately, we can use the main theorem of E. Ryckman and M. Visan \cite{Ryckman2007, Visan2007} as a black box.

The structure of this paper is as follows. Section \ref{sec:BasLWP} introduces the notation and summarizes the key dispersive estimates for the linear Schr\"odinger flow on hyperbolic spaces; several auxiliary lemmas used later are also established there. In Section \ref{sec:Eucapp}, we study nonlinear solutions of \eqref{eq1.1v3} whose initial data are concentrated at a point as in \eqref{eq1.1v7}. By applying the main results of \cite{Ryckman2007, Visan2007}, we prove that such solutions exist globally in time and satisfy the corresponding dispersive bounds.  Section \ref{sec:Prodechs} provides the core profile decomposition for $H^1$-bounded sequences on hyperbolic spaces. There we distinguish two types of profiles: Euclidean profiles, which may concentrate after time and space translations, and hyperbolic profiles, which essentially live at frequency $N=1$; hyperbolic geometry ensures that low‑frequency profiles ($N\ll 1$) can be treated as perturbations. Finally, Section \ref{sec:ProofMainThm} completes the proof of Proposition \ref{pr.2v8} by combining the profile decomposition with orthogonality arguments.



\section{Basic lemmas and local well-posedness results}\label{sec:BasLWP}

In this section, we first recall some geometric and harmonic analysis aspects of hyperbolic space, which can be found in \cite{Bray1994, Helgason1994}. Then, we will revisit the Strichartz estimates and present the local well-posedness of the nonlinear Schr\"odinger equation (NLS) on $\mathbb{H}^d$.

\subsection{Basic harmonic analysis on $\mathbb{H}^d$}
\textbf{Hyperbolic Space and Isometries.} 
For integers $d \geq 2$, consider the Minkowski space $\mathbb{R}^{d+1}$ with the standard Minkowski metric
$$-(dx_0)^2 + (dx_1)^2 + \cdots + (dx_d)^2,$$ 
and define the bilinear form
$$[x, y] = x_0y_0 - x_1y_1 - \cdots - x_dy_d.$$
The hyperbolic space $\mathbb{H}^d$ is defined as
$$\mathbb{H}^d = \{x \in \mathbb{R}^{d+1} : [x, x] = 1,\; x_0 > 0\}.$$
Denote the origin by $\mathbf{0} = (1,0,\ldots,0)$. The Minkowski metric induces a Riemannian metric $g$ on $\mathbb{H}^d$. Let $D$ be the covariant derivative, and $\mathrm{d}\mu$ be the induced measure. 

Let $\mathbb{G} := SO(d,1) = SO_e(d,1)$ be the connected Lie group of $(d+1) \times (d+1)$ matrices preserving $[\cdot, \cdot]$. A matrix $X$ belongs to $SO(d,1)$ if
$$X^T \cdot I_{d,1} \cdot X = I_{d,1},\quad \det X = 1,\quad X_{00} > 0,$$ 
where $I_{d,1} = \operatorname{diag}[-1, 1, \ldots, 1]$ (such that $[x,y] = -x^T I_{d,1} y$). Let $\mathbb{K} = SO(d)$ be the subgroup fixing the origin $\mathbf{0}$, which acts as compact rotations on $(x_1,\ldots,x_d)$. Define the commutative subgroup
\[\mathbb{A} := \left\{ a_s = \begin{pmatrix}
	\cosh s & \sinh s & 0\\
	\sinh s & \cosh s & 0\\
	0 & 0 & I_{d-1}
\end{pmatrix} : s \in \mathbb{R} \right\},\]
and the Cartan decomposition
\[\mathbb{G} = \mathbb{K} \mathbb{A}_+ \mathbb{K} ,\qquad \mathbb{A}_+ := \{a_s : s \ge 0\}.\]
The group $\mathbb{G}$ acts transitively on $\mathbb{H}^d$; indeed, $\mathbb{H}^d \simeq \mathbb{G}/\mathbb{K}$. For any $h \in \mathbb{G}$, the map $L_h : \mathbb{H}^d \to \mathbb{H}^d$ defined by $L_h(x) = h \cdot x$ is an isometry. This induces an isometry on functions
$$\pi_h : L^2(\mathbb{H}^d) \to L^2(\mathbb{H}^d),\quad \pi_h(f)(x) = f(h^{-1}x). $$ 
For any $h \in \mathbb{G}$, define the diffeomorphism
\begin{align}\label{diff.2v4} 
\Psi_I : \mathbb{R}^d \to \mathbb{H}^d,\quad \Psi_I(v^1,\ldots,v^d) = h \cdot \bigl(\sqrt{1+|v|^2}, v^1, \ldots, v^d\bigr).
\end{align} 
Using this, we set
$$\tilde{\pi}_h : C(\mathbb{R}^d) \to C(\mathbb{H}^d),\quad \tilde{\pi}_h(f)(x) = f(\Psi_I^{-1}(x)). $$ 
The integration formula
$$\int_{\mathbb{H}^d} f(x)\, d\mu(x) = \int_{\mathbb{R}^d} f(\Psi_I(v)) (1+\vert{}v\vert{}^2)^{-\frac{1}{2}}\, dv,\qquad f\in C_0(\mathbb{H}^d),$$ 
holds.\\
\textbf{Fourier Analysis on $\mathbb{H}^d$.} 
The Fourier transform maps functions on $\mathbb{H}^d$ to functions on $\mathbb{R}_+ \times \mathbb{S}^{d-1}$. For $\omega \in \mathbb{S}^{d-1}$ and $\lambda \in \mathbb{C}$, set
$$b(\omega) = (1, \omega) \in \mathbb{R}^{d+1},\qquad h_{\lambda,\omega}(x) = [x, b(\omega)]^{i\lambda - \frac{d-1}{2}}. $$ 
These are eigenfunctions of the Laplace–Beltrami operator
$$\Delta_g h_{\lambda,\omega} = -(\lambda^2 + \rho^2)h_{\lambda,\omega},\quad \rho = \frac{d-1}{2}. $$ 
For $f \in C_0(\mathbb{H}^d)$, define the Fourier transform
$$
\hat{f}(\lambda,\omega) = \int_{\mathbb{H}^d} f(x) h_{\lambda,\omega}(x)\, d\mu = \int_{\mathbb{H}^d} f(x)[x, b(\omega)]^{-i\lambda - \frac{d-1}{2}}\, d\mu. 
$$ 
The inversion formula states that for $f \in C_0^\infty(\mathbb{H}^d)$
$$
f(x) = \int_0^\infty \int_{\mathbb{S}^{d-1}} \hat{f}(\lambda,\omega)[x, b(\omega)]^{-i\lambda - \frac{d-1}{2}} \vert{}c(\lambda)\vert{}^{-2}\, d\lambda\, d\omega, 
$$ 
where the Harish–Chandra $c$-function is given by
$$
c(\lambda) = C\,\frac{\Gamma(i\lambda)}{\Gamma\bigl(\frac{d-1}{2}+i\lambda\bigr)} 
$$ 
with a suitable constant $C$, and the measure on $\mathbb{S}^{d-1}$ is normalized to $1$. The Fourier transform diagonalizes $\Delta_g$ according to
\begin{align}\label{eq1.2v10} 
\widehat{\Delta_g f}(\lambda,\omega) = -\bigl(\lambda^2 + \rho^2\bigr)\hat{f}(\lambda,\omega).
\end{align} 
The Plancherel identity states that the map $f \mapsto \hat{f}$ extends to an isometry from $L^2(\mathbb{H}^d)$ onto $L^2(\mathbb{R}_+ \times \mathbb{S}^{d-1}, \vert{}c(\lambda)\vert{}^{-2} d\lambda\, d\omega)$. Moreover,
$$
\int_{\mathbb{H}^d} f_1(x)\overline{f_2(x)}\, d\mu = \frac{1}{2} \iint_{\mathbb{R}_+ \times \mathbb{S}^{d-1}} \hat{f}_1(\lambda,\omega)\overline{\hat{f}_2(\lambda,\omega)}\, \vert{}c(\lambda)\vert{}^{-2}\, d\lambda\, d\omega. 
$$ 
\textbf{Multiplier Operators and $L^p$ Boundedness.} 
A bounded multiplier $m : \mathbb{R}_+ \to \mathbb{C}$ defines a bounded operator $T_m$ on $L^2(\mathbb{H}^d)$ by
$$
\widehat{T_m f}(\lambda,\omega) = m(\lambda)\hat{f}(\lambda,\omega).
$$ 
For $p \neq 2$, the question of $L^p$ boundedness is more delicate. A necessary condition for $T_m$ to be bounded on $L^p(\mathbb{H}^d)$ is that $m$ extends to an even analytic function in the interior of the strip
$$
\mathcal{F}_p = \{\lambda \in \mathbb{C} : \vert{}\Im\lambda\vert{} < \vert{}2/p-1\vert{}/\rho\}, 
$$ 
(see \cite{Clerc1974}). Conversely, if $p \in (1,\infty)$ and $m : \mathcal{F}_p \to \mathbb{C}$ is an even analytic function satisfying the symbol-type bounds
\begin{align}\label{eq1.2v13} 
	|\partial^\alpha m(\lambda)| \leq C(1+|\lambda|)^{-\alpha} \quad \text{for all }\alpha\in[0,d+2]\cap\mathbb{Z},\;\lambda\in\mathcal{F}_p, 
\end{align} 
then $T_m$ extends to a bounded operator on $L^p(\mathbb{H}^d)$ (\cite{Stanton1978}).\\ 
\textbf{Convolution and Radial Functions.} 
Normalize the Haar measures on $\mathbb{K}$ and $\mathbb{G}$ so that $\int_{\mathbb{K}} 1\, dk = 1$ and
$$
\int_{\mathbb{G}} f(g\cdot \mathbf{0})\, dg = \int_{\mathbb{H}^d} f(x)\, d\mu,\qquad f\in C_0(\mathbb{H}^d).
$$ 
For $f_1, f_2 \in C_0(\mathbb{G})$, define convolution
$$
(f_1 * f_2)(h) = \int_{\mathbb{G}} f_1(g) f_2(g^{-1}h)\, dg. 
$$ 
A function $K:\mathbb{G}\to\mathbb{C}$ is $\mathbb{K}$-biinvariant if $K(k_1 g k_2) = K(g)$ for all $k_1, k_2 \in \mathbb{K}$. A function $K:\mathbb{H}^d\to\mathbb{C}$ is $\mathbb{K}$-invariant if $K(k\cdot x) = K(x)$ for all $k \in \mathbb{K}, x \in \mathbb{H}^d$. For $f, K \in C_0(\mathbb{H}^d)$ with $K$ radial, define
$$
(f*K)(x) = \int_{\mathbb{G}} f(g\cdot \mathbf{0})\, K(g^{-1}\cdot x)\, dg.
$$ 
For radial $K$, the Fourier transform reduces to a spherical transform
\begin{align}\label{eq1.2v17} 
	\tilde{K}(\lambda,\omega) = \tilde{K}(\lambda) = \int_{\mathbb{H}^d} K(x)\Phi_{-\lambda}(x)\, d\mu,
\end{align} 
where
$$
\Phi_\lambda(x) = \int_{\mathbb{S}^{d-1}} [x, b(\omega)]^{-i\lambda - \frac{d-1}{2}}\, d\omega
$$ 
is the elementary spherical function. The inversion formula becomes
$$
K(x) = \int_0^\infty \tilde{K}(\lambda)\Phi_\lambda(x)\, \vert{}c(\lambda)\vert{}^{-2}\, d\lambda. 
$$ 
The convolution identity holds
$$
\widehat{f*K}(\lambda,\omega) = \hat{f}(\lambda,\omega)\cdot \tilde{K}(\lambda).
$$ 
\textbf{Sobolev Spaces.} 
For $s \in \mathbb{C}$, define the operator $(-\Delta)^{s/2}$ via the Fourier multiplier $\lambda \mapsto (\lambda^2+\rho^2)^{s/2}$. For $p \in (1,\infty)$ and $s \in \mathbb{R}$, the Sobolev space $W^{s,p}(\mathbb{H}^d)$ is the closure of $C_0^\infty(\mathbb{H}^d)$ under the norm
$$
\|f\|_{W^{s,p}(\mathbb{H}^d)} = \|(-\Delta)^{s/2}f\|_{L^p(\mathbb{H}^d)}. 
$$ 
Set $H^s = W^{s,2}$. Because the symbol of $(-\Delta)^{s/2}$ satisfies \eqref{eq1.2v13}, this definition coincides with the usual one on Riemannian manifolds. For $p \in (1,\infty)$ and $s = 1$, we have
$$
\|f\|_{W^{1,p}(\mathbb{H}^d)} \approx_p \Bigl( \int_{\mathbb{H}^d} \vert{}\nabla_g f\vert{}^p\, d\mu \Bigr)^{1/p}, 
$$ 
where $\vert{}\nabla_g f\vert{} = (D^\alpha f D_\alpha \bar{f})^{1/2}$. The Sobolev embedding theorem holds
$$
W^{s,p} \hookrightarrow L^q\quad \text{if } 1 < p \leq q < \infty,\; s = \frac{d}{p} - \frac{d}{q}. 
$$ 
\textbf{Notation.} 
Let $\mathbb{N}$ denote the natural numbers. For a time interval $I\subset\mathbb{R}$, define
$$
S^0(I) := L_t^\infty L_x^2 \cap L_t^2 L_x^{\frac{2d}{d-2}}(I\times\mathbb{H}^d),\quad
Z(I) := L_{t,x}^{\frac{2(d+2)}{d-2}}(I\times\mathbb{H}^d), 
$$ 
and for $s \in \mathbb{R}$
$$
S^s(I) := \bigl\{ u : (-\Delta_g)^{s/2}u \in S^0(I) \bigr\}. 
$$ 
Let $N^0(I)$ be the dual Strichartz space, and
$$
N^s(I) := \bigl\{ F : (-\Delta_g)^{s/2}F \in N^0(I) \bigr\}. 
$$ 
\textbf{A Substitute for Littlewood-Paley Projections.} 
For $N > 0$, define the operator $P_N : L^2(\mathbb{H}^d)\to L^2(\mathbb{H}^d)$ by
$$
P_N := N^{-2}\Delta_g e^{N^{-2}\Delta_g}, 
$$ 
with Fourier symbol
$$
\widehat{P_N f}(\lambda,\omega) = -N^{-2}(\lambda^2+\rho^2)e^{-N^{-2}(\lambda^2+\rho^2)}\hat{f}(\lambda,\omega). 
$$ 
Since only real-analytic multipliers are $L^p$-bounded on $\mathbb{H}^d$ for $p \neq 2$ \cite{Clerc1974}, $P_N$ serves as a replacement for Euclidean Littlewood-Paley projections (localizing to frequencies $\sim N$). By Fourier inversion
\[
P_N f(x) = \int_{\mathbb{H}^d} f(y)\, P_N(d(x,y))\, d\mu(y),
\]
with 
\begin{align}\label{Fourier}
	\left|P_N(r)\right| \lesssim N^d (1+Nr)^{-\frac{d+2}{d-2}} e^{-\frac{d+1}{d-2}r}.
\end{align}

According to \cite{GN}, the explicit form of the heat kernel is as follows 
\begin{theorem}\label{th.2v1}
	Assume $m \in \mathbb{N}$. Then the heat kernel $p_t(x,y)$ is given by the following formulas.
	\begin{enumerate}
		\item[(1)] When $d = 2m + 1$, we have
		\begin{align*}
			p_t(x,y) = \frac{(-1)^m}{2^m \pi^m} \frac{1}{(4\pi t)^{\frac{1}{2}}} \left( \frac{1}{\sinh r} \frac{\partial}{\partial r} \right)^m e^{-mt-\frac{r^2}{4t}} .
		\end{align*}
		\item[(2)] When $d = 2m + 2$, we have
		\begin{align*}
			p_t(x,y) = \frac{(-1)^m}{2^{m+\frac{5}{2}}\pi^{m+\frac{3}{2}}} t^{-\frac{3}{2}} e^{-\frac{(2m+1)^2 t}{4}} \left( \frac{1}{\sinh r} \frac{\partial}{\partial r} \right)^m \int_r^\infty \frac{se^{-\frac{s^2}{4t}}}{(\cosh s - \cosh r)^{\frac{1}{2}}} \, ds.
		\end{align*}
	\end{enumerate}
\end{theorem}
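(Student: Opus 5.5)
The plan is to derive both formulas from the three-dimensional heat kernel, using two classical tools for radial functions on $\mathbb{H}^d$: the Millson (Davies--Mandouvalos) intertwining recursion in odd dimensions, and the Abel-type descent from $\mathbb{H}^{d+1}$ to $\mathbb{H}^d$ in even dimensions. Throughout, write $r=d(x,y)$ and regard the heat kernel $p^{(d)}_t(r)$ as the radial solution of
\[
\partial_t p=\partial_r^2p+(d-1)\coth r\,\partial_r p,\qquad p\to\delta_{\mathbf 0}\ \text{as } t\to 0 .
\]

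The first step is the base case $d=3$, i.e. $m=1$. The conjugation $\Delta_{\mathbb{H}^3}\bigl(\tfrac{f(r)}{\sinh r}\bigr)=\tfrac{f''(r)-f(r)}{\sinh r}$ reduces the radial problem to the one-dimensional heat equation with the shift $-1$. This gives
\[
p^{(3)}_t=(4\pi t)^{-3/2}\,\frac{r}{\sinh r}\,e^{-t-r^2/4t}=\frac{-1}{2\pi}(4\pi t)^{-1/2}\,\frac1{\sinh r}\,\partial_r e^{-t-r^2/4t},
\]
which is formula (1) with $m=1$. The constant is fixed by matching the small-$t$ behaviour with the Euclidean kernel.

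The second step is the odd-dimensional recursion. If $u(t,r)$ solves the radial heat equation in dimension $d$, then $v=-\frac{1}{2\pi}\frac1{\sinh r}\partial_r u$ solves it in dimension $d+2$ with an extra factor $e^{-dt}$. This is checked by differentiating the radial equation in $r$ and commuting $\frac1{\sinh r}\partial_r$ past $\partial_r^2+(d-1)\coth r\,\partial_r$. The computation produces the shift $\lambda$-spectrum $\rho_{d+2}^2-\rho_d^2=d$. Iterating from $d=3$ and collecting the exponentials, the shifts combine into $e^{-m^2t}$. Since the paper's formula (1) writes the factor as $e^{-mt}$, the normalization of the exponent needs to be checked against the spectral gap $\rho^2=m^2$. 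I would verify that the exponent is $\rho^2 t=m^2t$, reading the displayed statement with that convention from \cite{GN}. The delta initial condition and the constant $(-1)^m/(2\pi)^m$ follow from comparing the leading term as $t\to0$ with $(4\pi t)^{-d/2}e^{-r^2/4t}$.

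The third step treats even $d=2m+2$ by descent from $\mathbb{H}^{2m+3}$. Integrating a radial function on $\mathbb{H}^{n+1}$ along the horocyclic/geodesic fibres projecting to $\mathbb{H}^n$ gives
\[
p^{(n)}_t(r)=\sqrt2\int_r^\infty \frac{p^{(n+1)}_t(s)\sinh s}{(\cosh s-\cosh r)^{1/2}}\,ds ,
\]
up to an exponential factor $e^{(\rho_{n+1}^2-\rho_n^2)t}$. Substituting formula (1) with $m+1$ and integrating by parts once, so that one $\frac1{\sinh s}\partial_s$ is absorbed against $\sinh s\,ds$, leaves $\bigl(\tfrac1{\sinh r}\partial_r\bigr)^m$ outside the integral. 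This works because the operator $\frac1{\sinh r}\partial_r$ commutes with the Abel transform in the appropriate sense. The result is the integral $\int_r^\infty se^{-s^2/4t}(\cosh s-\cosh r)^{-1/2}ds$ together with the prefactor $t^{-3/2}e^{-(2m+1)^2t/4}$. The constant $2^{-m-5/2}\pi^{-m-3/2}$ is tracked through these steps.

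The main obstacle is this last step. The difficulty is justifying the commutation of $\frac{1}{\sinh r}\partial_r$ with the singular Abel integral, by differentiating under the integral after the substitution $\cosh s=\cosh r+\tau^2$, and keeping the constants exact. I would first carry out the case $m=0$ (the $\mathbb{H}^2$ McKean formula) explicitly and then induct using the commutation identity.
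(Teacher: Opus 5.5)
Your plan is correct and is essentially the standard Grigor'yan--Noguchi argument: the recursion $p^{(d+2)}_t=-\frac{e^{-dt}}{2\pi\sinh r}\partial_r p^{(d)}_t$ starting from the $\mathbb{H}^3$ kernel, followed by the descent $p^{(n)}_t=\sqrt2\,e^{\frac{(2n-1)t}{4}}\int_r^\infty p^{(n+1)}_t(s)\sinh s\,(\cosh s-\cosh r)^{-1/2}\,ds$; the paper does not prove the theorem but only cites \cite{GN}. Your exponents and constants check out, and you are right that (1) must read $e^{-m^2t}$: the printed $e^{-mt}$ violates the heat equation for $m\ge 2$, and the paper's own later computations use $e^{-(d-1)^2t/4}$.

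The one step you leave unproved is the descent identity. It follows from the intertwining $L_n\mathcal{B}=\mathcal{B}\bigl(L_{n+1}+\tfrac{2n-1}{4}\bigr)$. Here $L_n=\partial_r^2+(n-1)\coth r\,\partial_r=(a^2-1)\partial_a^2+na\,\partial_a$ with $a=\cosh r$, and $\mathcal{B}g(r)=\int_r^\infty g(s)\sinh s\,(\cosh s-\cosh r)^{-1/2}ds=\int_0^\infty G(a+y)\,y^{-1/2}\,dy$. In these variables the intertwining is a short integration by parts.

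The same change of variables removes your ``main obstacle'': $\frac{1}{\sinh r}\partial_r=\partial_a$ visibly commutes with $\mathcal{B}$. You can even bypass that commutation entirely, because your recursion never uses the parity of $d$. One descent $\mathbb{H}^3\to\mathbb{H}^2$ followed by $m$ recursion steps already yields (2).
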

\subsection{Strichartz estimates and local well-posedness theory.} \textbf{Dispersive Estimates.} 
For $\phi \in H^s(\mathbb{H}^d)$, define the free Schrödinger evolution by
$$
\widehat{e^{it\Delta_g}\phi}(\lambda,\omega) = \hat{\phi}(\lambda,\omega)\, e^{-it(\lambda^2+\rho^2)}. 
$$ 
The following dispersive estimate is essential (see \cite{Anker2009, Banica2007, Banica2008, Ionescu2009, Pierfelice2008}):
\begin{align}\label{dispersive-e} 
	\|e^{it\Delta_g}\|_{L^p\to L^{p'}} \lesssim |t|^{-d(1/p-1/2)},\qquad p \in \Bigl[\frac{2d}{d+2}, 2\Bigr],\; p' = \frac{p}{p-1}, 
\end{align} 
for any $t \neq 0$.
Then, by the argument in \cite{Keel1998,Miao2004,Miao2025}, we obtain the Strichartz estimate for NLS:

\begin{proposition}[Strichartz estimates]
	Assume that $d \geq 4$ and $I = (a, b) \subseteq \mathbb{R}$ is an interval.
	\begin{enumerate}
		\item[(1)] If $\phi \in L^2(\mathbb{H}^d)$, then
		\begin{align*}
			\|e^{it\Delta_g} \phi\|_{L^\infty_t L^2_x \cap L^2_t L^{\frac{2d}{d-2}}_x(I \times \mathbb{H}^d)} \lesssim \|\phi\|_{L^2}.
		\end{align*}
		\item[(2)] If $F \in L^1_t L^2_x + L^2_t L^{\frac{2d}{d+2}}_x(I \times \mathbb{H}^d)$, then
		\begin{align*}
			\left\| \int_a^t e^{i(t-s)\Delta_g} F(s) \, ds \right\|_{L^\infty_t L^2_x \cap L^2_t L^{\frac{2d}{d-2}}_x(I \times \mathbb{H}^d)} \lesssim \|F\|_{L^1_t L^2_x + L^2_t L^{\frac{2d}{d+2}}_x(I \times \mathbb{H}^d)}.
		\end{align*}
	\end{enumerate}
\end{proposition}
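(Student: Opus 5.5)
The plan is to follow the abstract Keel–Tao framework, which needs two inputs for the family $U(t)=e^{it\Delta_g}$: an energy estimate and an untruncated decay estimate. The energy estimate is immediate. By the Plancherel identity and the definition of $e^{it\Delta_g}$ through the multiplier $e^{-it(\lambda^2+\rho^2)}$, $U(t)$ is unitary on $L^2(\mathbb{H}^d)$, so $\|U(t)\phi\|_{L^2}=\|\phi\|_{L^2}$. This already gives the $L^\infty_tL^2_x$ part of (1).

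For the decay estimate I take $p=\frac{2d}{d+2}$ in \eqref{dispersive-e}, which gives
\[
\|U(t)U(s)^*\|_{L^{\frac{2d}{d+2}}\to L^{\frac{2d}{d-2}}}=\|e^{i(t-s)\Delta_g}\|_{L^{\frac{2d}{d+2}}\to L^{\frac{2d}{d-2}}}\lesssim |t-s|^{-1}.
\]
Interpolating with the $L^2$ bound shows that $\|U(t)U(s)^*\|_{L^{r'}\to L^{r}}\lesssim|t-s|^{-d(\frac12-\frac1r)}$ for all $2\le r\le\frac{2d}{d-2}$. This is exactly the hypothesis of Keel–Tao with $\sigma=d/2$.

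Since $d\ge4$ gives $\sigma>1$, the endpoint pair $(q,r)=(2,\frac{2d}{d-2})$ is $\sigma$-admissible and non-sharp. Keel–Tao's endpoint theorem then gives the homogeneous bound (1) with the full norm $L^\infty_tL^2_x\cap L^2_tL^{\frac{2d}{d-2}}_x$. The same theorem gives the retarded estimate $\|\int_{s<t}U(t)U(s)^*F(s)\,ds\|_{L^q_tL^r_x}\lesssim\|F\|_{L^{\tilde q'}_tL^{\tilde r'}_x}$ for any two admissible pairs. I apply it with $(\tilde q,\tilde r)\in\{(\infty,2),(2,\frac{2d}{d-2})\}$, whose duals are $L^1_tL^2_x$ and $L^2_tL^{\frac{2d}{d+2}}_x$.

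Restricting to an interval $I=(a,b)$ means applying the result to $F\mathbf 1_{(a,b)}$, which turns $\int_{s<t}$ into $\int_a^t$. Linearity and the definition of the sum-space norm, namely taking the infimum over decompositions $F=F_1+F_2$, then give (2).

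The main obstacle is the double endpoint case of the inhomogeneous estimate, where both the solution and the forcing are measured in $L^2_t$ spaces. There the $TT^*$ argument combined with Christ–Kiselev fails, and one must run the bilinear dyadic decomposition of Keel–Tao for $|t-s|\sim2^j$. That decomposition needs the decay estimate for the truncated operators $U(t)U(s)^*$ in the full off-diagonal range of exponents near $(\frac{2d}{d+2},\frac{2d}{d-2})$, and interpolation supplies this as above. No Euclidean structure is used, since the argument is abstract in the measure space. So the only thing to check is that \eqref{dispersive-e} holds globally in $t$ with the Euclidean rate. This is precisely the hyperbolic dispersive estimate quoted from \cite{Anker2009, Ionescu2009}, and I would simply invoke it.
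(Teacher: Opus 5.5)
Your route is the paper's own: at this point the paper simply invokes Keel--Tao (and Miao et al.) on \eqref{dispersive-e}. However, your explicit reduction of the endpoint has a genuine gap, and the endpoint is the only nontrivial content of (1) (the $L^2_tL^{\frac{2d}{d-2}}_x$ bound) and of (2).

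The family of bounds $L^{r'}\to L^{r}$, $2\le r\le\frac{2d}{d-2}$, with rate $|t-s|^{-d(\frac12-\frac1r)}$ is not Keel--Tao's hypothesis. Their hypothesis is the $L^1\to L^\infty$ bound $|t-s|^{-\sigma}$, and the difference is exactly what the endpoint argument uses. Set $r=\frac{2d}{d-2}$, $\sigma=\frac d2$ and $\beta(a,b)=\sigma-1-\frac{\sigma}{a}-\frac{\sigma}{b}$. Their key lemma is $|T_j(F,G)|\lesssim 2^{-j\beta(a,b)}\|F\|_{L^2_tL^{a'}_x}\|G\|_{L^2_tL^{b'}_x}$ for all $(\frac1a,\frac1b)$ in an open neighbourhood of $(\frac1r,\frac1r)$. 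The bilinear real interpolation that sums the $T_j$ needs points with $\beta>0$, i.e.\ $\frac1a+\frac1b<\frac2r$. In Keel--Tao these come from the vertex $a=b=\infty$, that is, from the $L^1\to L^\infty$ decay.

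Your inputs are \eqref{dispersive-e} at $p=\frac{2d}{d+2}$, $L^2$ conservation, and the cases $a=2$ or $b=2$ (obtained from energy and non-endpoint estimates). Interpolating these only reaches the quadrant $\{\frac1a\ge\frac1r,\ \frac1b\ge\frac1r\}$. There $\beta\le0$, and $(\frac1r,\frac1r)$ is a corner, not an interior point. So the claim that interpolation supplies the needed range of exponents is false, and the dyadic sum does not close. Incidentally, $(2,\frac{2d}{d-2})$ is \emph{sharp} $\frac d2$-admissible in Keel--Tao's terminology; $\sigma>1$ only ensures it is not the excluded case $(2,\infty,1)$.

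Nor can you simply assume Keel--Tao's hypothesis. On $\mathbb{H}^d$ the kernel of $e^{it\Delta_g}$ decays only like $|t|^{-3/2}$ for $|t|\ge1$, because $|c(\lambda)|^{-2}\sim\lambda^2$ near $\lambda=0$. Hence $\|e^{it\Delta_g}\|_{L^1\to L^\infty}\lesssim|t|^{-d/2}$ fails for large $|t|$ when $d\ge4$.

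The missing input is dispersive decay strictly beyond the endpoint. Anker--Pierfelice \cite{Anker2009} prove $\|e^{it\Delta_g}\|_{L^{p'}\to L^{p}}\lesssim|t|^{-d(\frac12-\frac1p)}$ for $0<|t|\le1$ and $\lesssim|t|^{-3/2}$ for $|t|\ge1$, for every $p\in(2,\infty]$. For $\frac{2d}{d-2}<p\le\frac{2d}{d-3}$ one has $d(\frac12-\frac1p)\le\frac32$, so the Euclidean rate holds for all $t\neq0$. The vertex $(\frac1p,\frac1p)$ can then replace Keel--Tao's $(0,0)$: it gives $|T_j|\lesssim 2^{-j\beta(p,p)}$ with $\beta(p,p)>0$. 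The triangle it spans with the $a=2$ and $b=2$ points contains a neighbourhood of $(\frac1r,\frac1r)$.

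Alternatively, split $U(t)U(s)^*$ into two pieces:
\begin{itemize}
\item $|t-s|\le1$, handled by Keel--Tao with the local-in-time bound $\|e^{it\Delta_g}\|_{L^1\to L^\infty}\lesssim|t|^{-d/2}$;
\item $|t-s|\ge1$, where the integrable decay $|t-s|^{-3/2}$ and Young's inequality in $t$ suffice.
\end{itemize}
Since \cite{Anker2009} prove the hyperbolic Strichartz estimates, endpoint included, you could also cite their theorem directly. The paper's one-line appeal to Keel--Tao passes over the same point; the range of \eqref{dispersive-e} as stated does not by itself justify the endpoint.
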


Next, by the decay property of Littlewood-Paley multiplier  \eqref{Fourier}, we obtain the following property.
\begin{lemma}\label{le.2v3}
	\begin{enumerate}
		\item[(1)] Let $\varepsilon \in (0, 1]$, $R_\varepsilon \geq 1$, for each $x \in \mathbb{H}^d$, $N \geq 1$,
		\begin{align*}
			|P_N f(x)| \lesssim N^{\frac{d-2}{2}} \left( \|f \cdot \mathbf{1}_{B(x,R_\varepsilon N^{-1})}\|_{L^{\frac{2d}{d-2}}(\mathbb{H}^d)} + \varepsilon \|f\|_{L^{\frac{2d}{d-2}}(\mathbb{H}^d)} \right).
		\end{align*}
		\item[(2)] If $f \in H^1(\mathbb{H}^d)$, then
		\begin{align*}
			\|f\|_{L^{\frac{2d}{d-2}}(\mathbb{H}^d)} \lesssim \|\nabla_g f\|_{L^2(\mathbb{H}^d)}^{\frac{2}{d}} \sup_{N \geq 1,\, x \in \mathbb{H}^d} \left[ N^{-\frac{d-2}{2}} \left|P_N f(x)\right| \right]^{\frac{d-2}{d}}.
		\end{align*}
	\end{enumerate}
\end{lemma}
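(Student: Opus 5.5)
For part (1) I will use the kernel representation $P_Nf(x)=\int_{\mathbb{H}^d} f(y)P_N(d(x,y))\,d\mu(y)$ together with the pointwise bound \eqref{Fourier}. I split the integral into the regions $d(x,y)<R_\varepsilon N^{-1}$ and $d(x,y)\ge R_\varepsilon N^{-1}$ and apply Hölder with exponents $\frac{2d}{d-2}$ and $\frac{2d}{d+2}$ on each piece. On the near region this gives
\[
\|f\mathbf{1}_{B(x,R_\varepsilon N^{-1})}\|_{L^{\frac{2d}{d-2}}}\,\|P_N(d(x,\cdot))\|_{L^{\frac{2d}{d+2}}(\mathbb{H}^d)} .
\]
On the far region it gives $\|f\|_{L^{\frac{2d}{d-2}}}\,\|P_N(d(x,\cdot))\mathbf{1}_{d\ge R_\varepsilon/N}\|_{L^{\frac{2d}{d+2}}}$. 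To compute these norms I pass to polar coordinates, $d\mu=c_d\sinh^{d-1}r\,dr\,d\omega$. Then
\[
\int_{r\ge R/N}\bigl[N^d(1+Nr)^{-\frac{d+2}{d-2}}e^{-\frac{d+1}{d-2}r}\bigr]^{\frac{2d}{d+2}}\sinh^{d-1}r\,dr .
\]

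This integral splits into two ranges.
\begin{enumerate}
\item[(a)] For $r\le1$ we use $\sinh r\approx r$. Substituting $s=Nr$, we get $N^{\frac{2d^2}{d+2}-d}\int_{s\ge R}(1+s)^{-\frac{2d}{d-2}}s^{d-1}\,ds$.
\item[(b)] For $r\ge1$ the exponential weight is $e^{-\frac{2d(d+1)}{(d-2)(d+2)}r}$ against $e^{(d-1)r}$. Here the expected hyperbolic-volume obstacle arises: one must check that $\frac{2d(d+1)}{(d-2)(d+2)}>d-1$ or otherwise compensate with the polynomial factor.
\end{enumerate}

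This exponent check is the step I expect to be delicate. If the stated decay in \eqref{Fourier} is insufficient for large $d$, I would instead use the sharper heat-kernel decay from Theorem \ref{th.2v1}, which behaves like $e^{-\rho r}$ times Gaussian factors. With that, the tail is summable because the Gaussian $e^{-N^2r^2/4}$ with $N\ge1$ dominates the volume growth $e^{(d-1)r}$.

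In the Euclidean-type range the exponent $\frac{2d}{d-2}>d$ is what one needs for convergence. Heuristically the kernel behaves like $N^d(Nr)^{-(d+2)}$, and I would read \eqref{Fourier} as giving enough polynomial decay for $\int_R^\infty s^{d-1}(1+s)^{-\cdots}\,ds\to0$ as $R\to\infty$. The net power of $N$ is $N^{d-\frac{d(d+2)}{2d}}=N^{\frac{d-2}{2}}$, which is the claimed factor. Choosing $R_\varepsilon$ large makes the tail contribution smaller than $\varepsilon$, and the near region gives the first term.

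For part (2) I follow the standard refined Sobolev argument of Ionescu–Pausader–Staffilani. I write $f=\sum_{N\ge1\ \text{dyadic}}P_N f$ via a resolution of identity adapted to $P_N$. Concretely, $f=\int_0^\infty P_{N}f\,\frac{dN}{N}$ up to constants, since $\int_0^\infty \lambda e^{-\lambda/N^2}\,d(N^{-2})=1$. The low-frequency part $N\le1$ is controlled using the spectral gap $\lambda^2+\rho^2\ge\rho^2$ on $\mathbb{H}^d$, which makes these pieces harmless.

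The core is the estimate
\[
\|f\|_{L^{\frac{2d}{d-2}}}^{\frac{2d}{d-2}}\lesssim\int\!\!\int |P_{N}f|\,|P_{M}f|^{\frac{d+2}{d-2}}\cdots .
\]
Instead of expanding it, I would interpolate in the simpler Bernstein-type way. I bound $\|P_Nf\|_{L^{\frac{2d}{d-2}}}^{\frac{2d}{d-2}}\le\|P_Nf\|_{L^\infty}^{\frac{4}{d-2}}\|P_Nf\|_{L^2}^2$. Then I sum over $N$ using $\|P_Nf\|_{L^\infty}\le N^{\frac{d-2}{2}}S$, where $S$ is the supremum in the statement, together with $\sum_N N^2\|P_Nf\|_{L^2}^2\lesssim\|\nabla_g f\|_{L^2}^2$, which follows from Plancherel. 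This yields $\sum_N\|P_Nf\|^{\frac{2d}{d-2}}_{L^{\frac{2d}{d-2}}}\lesssim S^{\frac{4}{d-2}}\|\nabla f\|_2^2$.

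The main obstacle is passing from the sum of pieces to $f$ itself. For this I would use a square-function / Littlewood–Paley inequality on $\mathbb{H}^d$ for the family $P_N$; the symbols are analytic, so the Stanton–Tomas multiplier theorem \eqref{eq1.2v13} applies to randomized sums. Alternatively, I would use the Euclidean trick of expanding $|f|^{\frac{2d}{d-2}}$ into a double sum over $N\le M$ and exploiting geometric decay in $N/M$. Raising to the power $\frac{d-2}{2d}$ then gives the stated exponents $\frac{2}{d}$ and $\frac{d-2}{d}$.
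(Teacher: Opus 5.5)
\textbf{Part (2) has a genuine gap: the key step is false.} For $p=\frac{2d}{d-2}>2$ there is no inequality $\|f\|_{L^p}^p\lesssim\sum_N\|P_Nf\|_{L^p}^p$, nor its continuous analogue with $\frac{dN}{N}$. A square-function estimate cannot supply one. It gives $\|f\|_{L^p}\approx\|(\sum_N|P_Nf|^2)^{1/2}\|_{L^p}$, and since $p/2>1$ one has $(\sum_N|P_Nf|^2)^{p/2}\ge\sum_N|P_Nf|^p$, which is the wrong direction. Already in $\mathbb{R}^d$, the lacunary sum $\chi(x)\sum_{j\le M}e^{i2^jx_1}$ has $\|f\|_{L^p}^p\sim M^{p/2}$, while the sum of the pieces is $O(M)$.

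The missing idea is that the supremum hypothesis must be applied to the whole low-frequency part of $f$ below a threshold adapted to the level $\lambda$, not to each $P_Nf$ against itself. Your double-sum alternative would need this same principle on top of a genuine reverse square-function estimate; as written it is not an argument. One route that works is the distribution-function argument. Apply $\|g\|_{L^p}^p=p\int_0^\infty\lambda^{p-1}|\{|g|>\lambda\}|\,d\lambda$ to $g=f-e^{\Delta_g}f=-2\int_1^\infty P_Nf\,\frac{dN}{N}$, and set $S:=\sup_{N\ge1,\,x}N^{-\frac{d-2}{2}}|P_Nf(x)|$.
\begin{itemize}
\item For $\lambda\gtrsim S$, choose $A=A(\lambda)\ge1$ with $A^{\frac{d-2}{2}}S=c\lambda$, so that $2\int_1^{A}|P_Nf|\,\frac{dN}{N}\le\lambda/2$ pointwise. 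Apply Chebyshev in $L^2$ to the remainder $f-e^{A^{-2}\Delta_g}f$ and integrate in $\lambda$ against the spectral measure.
\item For $\lambda\lesssim S$, use Chebyshev with $\|f\|_{L^2}\lesssim\|\nabla_g f\|_{L^2}$.
\end{itemize}
This gives $\|g\|_{L^p}^p\lesssim S^{\frac{4}{d-2}}\|f\|_{H^1}^2$, which is the bound your heuristic was aiming at.

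Taking the $\frac{d-2}{2d}$ power, however, produces $\|\nabla_g f\|_{L^2}^{\frac{d-2}{d}}S^{\frac{2}{d}}$. These exponents are the reverse of the printed ones, and your claim to recover ``the stated exponents'' silently swaps them. The printed version is in fact false for $d\ge5$. Take a sum of $M$ translates $\pi_{h_j}\phi$ of a fixed bump, with pairwise distances large depending on $M$. Then $\|f\|_{L^{\frac{2d}{d-2}}}\sim M^{\frac{d-2}{2d}}$, $\|\nabla_g f\|_{L^2}\sim M^{\frac12}$ and $S\lesssim1$, so the printed inequality would force $M^{\frac{d-2}{2d}}\lesssim M^{\frac1d}$. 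You should state and prove the corrected form, which is the standard refined Sobolev inequality. It coincides with the printed one for $d=4$, and it still gives what the profile decomposition needs: a small supremum forces a small $L^{\frac{2d}{d-2}}$ norm.

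\textbf{Two further points need repair.}

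\emph{The low-frequency piece in part (2).} The piece $e^{\Delta_g}f$ (frequencies $N<1$) is not harmless merely because of the $L^2$ spectral gap. The hypothesis controls only $N\ge1$, and $e^{\Delta_g}f=\Delta_g^{-1}P_1f$ cannot be bounded in $L^\infty$ by $\|P_1f\|_{L^\infty}$. The reason is that $\Delta_g^{-1}$ is unbounded on $L^\infty(\mathbb{H}^d)$: its Green function decays only like $e^{-(d-1)r}$, exactly compensated by the volume growth. The estimate does work through $L^q$ with $\frac{2d}{d-2}<q<\infty$, where $\Delta_g^{-1}$ is bounded. Interpolate between $L^2$ and $L^q$ and use $\|P_1f\|_{L^q}\le\|P_1f\|_{L^\infty}^{1-\frac2q}\|P_1f\|_{L^2}^{\frac2q}$; this gives exactly $\|e^{\Delta_g}f\|_{L^{\frac{2d}{d-2}}}\lesssim\|f\|_{H^1}^{\frac{d-2}{d}}\|P_1f\|_{L^\infty}^{\frac2d}$.

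\emph{The kernel bound in part (1).} Your part (1) is the standard near/far H\"older argument of Ionescu--Pausader--Staffilani, to which the paper defers. You correctly take $R_\varepsilon$ large depending on $\varepsilon$; the printed ``$R_\varepsilon\ge1$'' has to be read this way. But \eqref{Fourier} is insufficient for every $d\ge4$, not only because of its exponential factor. On $r\le1$ its polynomial part leads to $\int_R^N(1+s)^{-\frac{2d}{d-2}}s^{d-1}\,ds$, which is unbounded in $N$ since $\frac{2d}{d-2}\le d$ when $d\ge4$. So your reading of \eqref{Fourier} in the Euclidean range is mistaken. The fallback you mention has to be used in both ranges. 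The kernel $P_N(r)=N^{-2}\partial_tp_t(r)|_{t=N^{-2}}$ obeys a Gaussian bound $|P_N(r)|\lesssim N^de^{-cN^2r^2}$ for $N\ge1$. This gives $\|P_N(d(x,\cdot))\|_{L^{\frac{2d}{d+2}}(\mathbb{H}^d)}\lesssim N^{\frac{d-2}{2}}$, and a far-region contribution that is $o(1)\,N^{\frac{d-2}{2}}\|f\|_{L^{\frac{2d}{d-2}}}$ as $R_\varepsilon\to\infty$, uniformly in $N\ge1$.
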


\begin{proof}
Although \cite{Ionescu2012} only provides the proof for three dimensions, it can be extended to higher dimensions in a similar manner. 
\end{proof}

Next, we prove a Kato's type local smoothing estimate, which plays an important role in the proof of Proposition \ref{pr.2v8}.

\begin{lemma}[Local smoothing estimate]\label{le.2v4}
	Suppose that $\psi \in H^1(\mathbb{H}^d)$, for $R, T > 0$, and $\theta \in (0, 1]$, then
	\begin{align}\label{eq1.2v1}
		\| \nabla_g e^{it\Delta_g} \psi \|_{L^2_{t,x}([0,T] \times B(x_0,R))} \leq C(R, T, \theta) \|e^{it\Delta_g} \psi\|_{L^2([0,T])}^\frac{\theta}{1+2\theta} \|\psi\|_{H^1}^\frac{1+\theta}{1+2\theta} 
	\end{align}
	uniformly in $\psi$, where
	\begin{align*}
		C(R, T, \theta) = T^{\frac{2}{(d+2)(1+2\theta)}}R^{\frac{d(1+\theta)-2(1-\theta)+\theta(d+2)}{(d+2)(1+2\theta)}} + (1+R^3T) \left( T^{\frac{1}{d+2}} L^{\frac{2d(1+2\theta)-2(3-2\theta)}{4(d+2)}} \right)^{\frac{1}{1+2\theta}}, \quad \theta \in (0, 1].
	\end{align*}
\end{lemma}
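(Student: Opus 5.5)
The plan is to follow the frequency-splitting argument of \cite{Ionescu2012} and balance a low-frequency piece against a high-frequency piece. The split is made with the heat semigroup rather than sharp projections, since only analytic multipliers are $L^p$-bounded on $\mathbb{H}^d$. I read the undefined symbol $L$ in the constant $C(R,T,\theta)$ as the frequency cut-off parameter; it is fixed only at the end by optimization. I also read $\|e^{it\Delta_g}\psi\|_{L^2([0,T])}$ as a space-time norm of $e^{it\Delta_g}\psi$ over $[0,T]$, localized near $B(x_0,R)$. For $L\ge 1$, write
\[
\psi=e^{L^{-2}\Delta_g}\psi+\bigl(1-e^{L^{-2}\Delta_g}\bigr)\psi=:\psi_{lo}+\psi_{hi},
\]
and bound the two resulting terms in $\|\nabla_g e^{it\Delta_g}\psi\|_{L^2_{t,x}([0,T]\times B(x_0,R))}$ separately.

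\textbf{High frequencies: Kato smoothing.} First I would prove a Kato-type local smoothing estimate,
\[
\|\nabla_g e^{it\Delta_g}f\|_{L^2([0,T]\times B(x_0,R))}\lesssim (1+R^3T)\,\|f\|_{H^{1/2}}.
\]
By applying an isometry $\pi_h$, one may take $x_0=\mathbf{0}$. The estimate comes from the virial/Morawetz identity
\[
\frac{d}{dt}\,\Im\int_{\mathbb{H}^d}\bar u\,\nabla a\cdot\nabla u\,d\mu=\int \mathrm{Hess}\,a(\nabla u,\overline{\nabla u})\,d\mu-\frac14\int\Delta_g^2a\,|u|^2\,d\mu,
\]
applied to $u=e^{it\Delta_g}f$ with $a=\langle r\rangle$ and $r=d(x,\mathbf{0})$. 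On $\mathbb{H}^d$ the Hessian $\mathrm{Hess}\,r=\coth r\,(g-dr\otimes dr)$ is positive, so the Hessian term controls $\int_{B_R}|\nabla u|^2$ up to an $R$-dependent weight. The time boundary terms are bounded by $\|f\|_{H^{1/2}}^2$, and the bi-Laplacian term contributes $T\|f\|_{L^2}^2$ with polynomial dependence on $R$; this is where the factor $(1+R^3T)$ arises. The symbol $1-e^{-L^{-2}(\lambda^2+\rho^2)}$ is bounded by $\min\{1,L^{-2}(\lambda^2+\rho^2)\}$, so $\|\psi_{hi}\|_{H^{1/2}}\lesssim L^{-1/2}\|\psi\|_{H^1}$. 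Hence the high part contributes $(1+R^3T)L^{-1/2}\|\psi\|_{H^1}$.

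\textbf{Low frequencies: heat-kernel smoothing.} Here $\nabla_g e^{it\Delta_g}\psi_{lo}=\nabla_g e^{L^{-2}\Delta_g}\bigl(e^{it\Delta_g}\psi\bigr)$. I would use the explicit kernels of Theorem \ref{th.2v1}, in both parities $d=2m+1$ and $d=2m+2$, to show that $\nabla_g e^{L^{-2}\Delta_g}$ has a kernel bounded by $L^{d+1}(1+Lr)^{-M}e^{-cr}$. The tail outside $B(x_0,2R)$ is harmless because of the exponential decay. Then Hölder and Young on the ball bound the gradient pointwise by $L^{1+\alpha}$ times the local space-time norm of $e^{it\Delta_g}\psi$, for a suitable $\alpha>0$. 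Integrating over $[0,T]\times B(x_0,R)$ then gives a contribution of the form $T^{a}R^{b}L^{\beta}\|e^{it\Delta_g}\psi\|$. I would also interpolate this with the trivial bound $\|\nabla_g e^{it\Delta_g}\psi_{lo}\|\le T^{1/2}\|\psi\|_{H^1}$ through the parameter $\theta$. This produces a term $T^{a}R^{b}L^{\gamma(\theta)}\|e^{it\Delta_g}\psi\|^{\theta'}\|\psi\|_{H^1}^{1-\theta'}$.

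\textbf{Optimization.} Finally, I would choose $L$ to equate the two contributions, namely $L^{-1/2}\|\psi\|_{H^1}$ against $L^{\gamma}\|e^{it\Delta_g}\psi\|^{\theta'}\|\psi\|^{1-\theta'}$. Solving for $L$ should yield exactly the exponents $\frac{\theta}{1+2\theta}$ and $\frac{1+\theta}{1+2\theta}$, together with the $T,R$ powers in $C(R,T,\theta)$. If instead $L<1$ is forced, the Sobolev bound $\|\nabla_g e^{it\Delta_g}\psi\|\le T^{1/2}\|\psi\|_{H^1}$ applies directly and is absorbed into the constant.

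The main obstacle is the local smoothing estimate. The lower-order curvature terms in the Morawetz identity must be tracked carefully so that only polynomial losses in $R$ and $T$ appear. A secondary difficulty is proving uniform gradient heat-kernel bounds from Theorem \ref{th.2v1} for all $d\ge4$, in particular handling the $(\sinh^{-1}r\,\partial_r)^m$ derivatives and the Abel-type integral in even dimensions.
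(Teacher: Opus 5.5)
Your high-frequency step is essentially the paper's Step 2: a Morawetz action with a weight that is convex on $B(x_0,R)$, giving $(1+R^3T)L^{-1/2}\|\psi\|_{H^1}$. Your split-and-optimize scheme is also the paper's. The gap is in your low-frequency step, which as described cannot deliver the stated exponents.

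\textbf{Why the pointwise bound fails.} A pointwise bound on $\nabla_g e^{L^{-2}\Delta_g}u(t,x)$ in terms of $\|u(t)\|_{L^q}$ necessarily costs $L^{1+d/q}$. So $\alpha=\frac{d(d-2)}{2(d+2)}$ for $q=\frac{2(d+2)}{d-2}$, and $\alpha=\frac d2$ for a localized $L^2$ norm. Integrating over $[0,T]\times B(x_0,R)$ only produces volume factors and never recovers powers of $L$. Interpolating with $T^{1/2}\|\psi\|_{H^1}$ then gives $L^{(1+\alpha)\theta'}\|u\|^{\theta'}\|\psi\|_{H^1}^{1-\theta'}$. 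Balancing this against $L^{-1/2}\|\psi\|_{H^1}$ puts the exponent $\frac{\theta'}{1+2(1+\alpha)\theta'}\le\frac{1}{3+2\alpha}<\frac13$ on $\|e^{it\Delta_g}\psi\|$. Since $\|e^{it\Delta_g}\psi\|_{Z([0,T])}\lesssim\|\psi\|_{H^1}$, a smaller exponent is a strictly weaker estimate. Indeed, with $X=\|e^{it\Delta_g}\psi\|/\|\psi\|_{H^1}$, one has $\inf_{L\ge1}\bigl(\min\{L^{1+\alpha}X,1\}+L^{-1/2}\bigr)\sim X^{1/(3+2\alpha)}$ as $X\to0$. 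So your scheme reaches $\frac{\theta}{1+2\theta}$ only when $\theta(1+2\alpha)\le1$. The case $\theta=1$ (the classical exponents $\frac13,\frac23$) is out of reach, contrary to your expectation that the optimization ``should yield exactly'' these exponents.

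\textbf{The fix.} Keep the low piece in a space-time Lebesgue norm, so that the power of $L$ lost equals the power of the $Z$-norm gained. The paper does this with three steps: Hölder on $[0,T]\times B(x_0,R)$; Bernstein, $\|\nabla P_L f\|\lesssim L^{\theta}\|(-\Delta_g)^{(1-\theta)/2}f\|$; and interpolation of $(-\Delta_g)^{(1-\theta)/2}e^{it\Delta_g}\psi$ between the $Z$-norm and the Strichartz bound for $\nabla_g e^{it\Delta_g}\psi$. This yields $L^{\theta}\|e^{it\Delta_g}\psi\|_{Z}^{\theta}\|\psi\|_{H^1}^{1-\theta}$.

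In your heat-semigroup setting it is simpler still. Schur's test gives $\|\nabla_g e^{L^{-2}\Delta_g}f\|_{L^q}\lesssim L\|f\|_{L^q}$ uniformly in $L\ge1$. Hölder then gives $\|\nabla_g e^{it\Delta_g}\psi_{lo}\|_{L^2([0,T]\times B(x_0,R))}\lesssim (T|B(x_0,R)|)^{\frac{2}{d+2}}L\|e^{it\Delta_g}\psi\|_{Z([0,T])}$. Your min-interpolation with $T^{1/2}\|\psi\|_{H^1}$ then gives exactly the exponents $\frac{\theta}{1+2\theta}$ and $\frac{1+\theta}{1+2\theta}$.

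This reads the right-hand norm as the global-in-space $Z([0,T])$ norm, which is what the paper's proof actually uses, so no tail argument is needed. For the Schur test, keep the Gaussian factor $e^{-cL^2r^2}$ of the true heat kernel. A bound of the form $L^{d+1}(1+Lr)^{-M}e^{-cr}$ is not integrable against the volume density $\sinh^{d-1}r$ unless $c>d-1$.
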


\begin{proof}
	\text{Step 1.} The low frequency case. Let $L > 0$, it follows from H\"older's, Bernstein's (for instance, see \cite{Ivanovici2017,Klainerman2006}), and Strichartz inequalities that
	\[\begin{aligned}
		& \| \nabla (e^{it\Delta_g} P_L \psi) \|_{L^2_{t,x}([0,T] \times B(x_0,R))} \\
		 \leq &T^{\frac{2d(1+\theta)-4(1-\theta)}{2(d+2)}} \| \nabla (e^{it\Delta_g} P_L \psi) \|_{L^2_{t,x}([0,T])}^{\frac{2(d+2)}{d-2}} L^{\frac{2d(d+2)}{d^2+4-2d\theta-4\theta}} \\
		  \leq& T^{\frac{2d(1+\theta)-4(1-\theta)}{2(d+2)}} L^{\theta} \|(-\Delta_g)^{(1-\theta)/2} e^{it\Delta_g} P_L \psi \|_{L^2_{t,x}([0,T])}^{\frac{2(d+2)}{d-2}} L^{\frac{2d(d+2)}{d^2+4-2d\theta-4\theta}} \\
		 \leq &T^{\frac{2d(1+\theta)-4(1-\theta)}{2(d+2)}} L^{\theta} \|e^{it\Delta_g} \psi \|_{Z([0,T])}^{\theta} \|(-\Delta_g)^{1/2} e^{it\Delta_g} \psi \|_{L^2_{t,x}([0,T])}^{1-\theta} L^{\frac{2d(d+2)}{d^2+4}} \\
		 \leq& T^{\frac{2d(1+\theta)-4(1-\theta)}{2(d+2)}} L^{\theta} \|e^{it\Delta_g} \psi \|_{Z([0,T])}^{\theta} \|\psi\|_{H^1(\mathbb{H}^d)}^{1-\theta}.
	\end{aligned}
	\]
	\text{Step 2.} The high frequency case. Let $K \geq L$, we show that
	\begin{align}\label{eq1.2v2}
		\left\| \nabla_g \left(P_L e^{it\Delta_g} \psi\right) \right\|_{L^2_{t,x}([0,T] \times B(x_0,R))} \lesssim \frac{1+R^3T}{\sqrt{L}} \|\psi\|_{H^1(\mathbb{H}^d)}.
	\end{align}
	There exists a real-valued function \(a = a_R \in C^\infty(\mathbb{H}^d)\) that satisfies
	\begin{align}\label{eq1.2v3}
		&|D^\alpha a D_\alpha a| \lesssim 1 \text{ in } \mathbb{H}^d, \quad |\Delta_g^2 a| \lesssim R^3 \text{ in } \mathbb{H}^d, \nonumber\\
		&X^\alpha X_\alpha \cdot R\mathbf{1}_{B(x_0,R)} \lesssim X^\alpha X^\beta D_\alpha D_\beta a \text{ in } \mathbb{H}^d \text{ for any vector-field } X \in T(\mathbb{H}^d).
	\end{align}
	Next, let $u := P_K(e^{it\Delta_g} \psi), \text{ where } K \geq L \geq 1$ and define the Morawetz action
	\begin{align*}
		M_a(t) = 2\Im \int_{\mathbb{H}^d} D^\alpha a(x)\,\overline{u(x)}\, D_\alpha u(x) \, d\mu(x).
	\end{align*}
	After differentiation, we obtain
	\begin{align*}
		M'_a(t) = -\int_{\mathbb{H}^d} \Delta_g^2 a\,|u|^2 \, d\mu + 4\Re \int_{\mathbb{H}^d} D^\beta D^\alpha a\, D_\beta \overline{u}\, D_\alpha u \, d\mu.
	\end{align*}
Thus, by integrating over the time interval \([-T, T]\) and using the first two properties from \eqref{eq1.2v3}, we have
\begin{align*}
 4 \int_{-T}^T \int_{\mathbb{H}^d} \Re\bigl(D^\alpha D^\beta a \cdot D_\alpha u \,\overline{D_\beta u}\bigr) \, d\mu \, dt  \leq& 2 \sup_{|t| \leq T} |M_a(t)| + \int_{-T}^T \int_{\mathbb{H}^d} |\Delta_g(a)| \cdot |u|^2 \, d\mu \, dt \\
 \lesssim& \int_{\mathbb{H}^d} |D^\alpha a|\,|\overline{u}|\,|D_\alpha u| \, d\mu + R^3 \int_{-T}^T \|u(t)\|_{L^2(\mathbb{H}^d)}^2 \, dt \\
\lesssim& \sup_{|t| \leq T} \|u(t)\|_{L^2(\mathbb{H}^d)} \|u(t)\|_{H^1(\mathbb{H}^d)} + R^3 \int_{-T}^T \|u(t)\|_{L^2(\mathbb{H}^d)}^2 \, dt \\
 \lesssim& K^{-1} \|\psi\|_{H^1(\mathbb{H}^d)}^2 + R^3 T K^{-1} \|\psi\|_{H^1(\mathbb{H}^d)}^2.
	\end{align*}
	It follows from Plancherel's identity and techniques of Fourier analysis in $H^1(\mathbb{H}^d)$ that
	\begin{align*}
		\|u(t)\|_{H^1(\mathbb{H}^d)} \lesssim \|P_K e^{it\Delta_g} \psi\|_{H^1(\mathbb{H}^d)}.
	\end{align*}
Moreover, applying Bernstein's inequality, we have
	\begin{align*}
		\|P_K e^{it\Delta_g} \psi\|_{H^1(\mathbb{H}^d)}^2 \lesssim  \|\psi\|_{H^1(\mathbb{H}^d)}^2.
	\end{align*}
	Given that $a$ is real valued, the assumption $K \geq L$ holds, and using the inequality in the last line of \eqref{eq1.2v3}, we obtain the desired bound \eqref{eq1.2v2}. Finally, the estimate is achieved by setting
	\begin{align*}
		L = \left(T^{\frac{2}{d+2}} R^{\frac{d(1+2\theta)-2(3-2\theta)}{2(d+2)}} \|e^{it\Delta_g} \psi\|_{Z([0,T])}^{\theta} \|\psi\|_{H^1(\mathbb{H}^d)}^{-\theta}\right)^{-\frac{1}{1+2\theta}}.
	\end{align*}
The remainder of the proof is similar to Lemma 2.3 in \cite{Ionescu2012}, so we omit the details here.
\end{proof}
Below, we give the theory of local well-posedness and stability, the global Morawetz inequality, and a compactness argument that is used in the proof of Theorem \ref{th1v1} in Section 5. Suppose that $\mathbb{R}$ is an open interval, then
\begin{align*}
	\mathcal{P} = \{(I, u) : I \subseteq \mathbb{R},\; u \in C(I : H^1(\mathbb{H}^d))\}.
\end{align*}
Moreover, when $I \subseteq I'$, $u'(t) = u(t)$ for any $t \in I$,
\begin{align*}
	(I, u) \leq (I', u').
\end{align*}

By the Strichartz estimate, we easily obtain the local well-posedness and stability theory as follows:
\begin{proposition}[Local well-posedness]\label{pr.2v5}
	\label{prop:local}
	Suppose that $\phi \in H^1(\mathbb{H}^d)$. Then the initial-value problem
	\begin{align}\label{pr.2v51}
		\begin{cases}
			(i\partial_t + \Delta_g)u = u|u|^{\frac{4}{d-2}}, & (t,x) \in I \times \mathbb{H}^d, \\
			u(0) = \phi
		\end{cases} 
	\end{align}
	has a unique maximal solution $(I,u) = (I(\phi), u(\phi)) \in \mathcal{P}$ with $0 \in I$. On the interval $I$, both the mass $M(u)$ and the energy $E(u)$ as defined in \eqref{eq1.1v2} remain constant. Furthermore, for every compact subinterval $J \subseteq I$, we have $\|u\|_{S^1(J)} < \infty$. Moreover, if $I_+ := I \cap [0, \infty)$ is bounded, then $\|u\|_{Z(I_+)} = \infty$, and if $I_- := I \cap (-\infty, 0]$ is bounded, then $\|u\|_{Z(I_-)} = \infty$.
	
	That is, a local-in-time solution exists and can be continued as a strong solution unless its spacetime norm blows up.
\end{proposition}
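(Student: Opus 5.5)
We will prove Proposition \ref{pr.2v5} by a contraction mapping argument in Strichartz spaces, using the Strichartz estimates above with the Sobolev embedding on $\mathbb{H}^d$. Write the Duhamel formulation
\[
\Phi(u)(t) = e^{it\Delta_g}\phi - i\int_0^t e^{i(t-s)\Delta_g}\bigl(|u|^{\frac{4}{d-2}}u\bigr)(s)\,ds .
\]
We work in the space $X(J)=\{u\in S^1(J): \|u\|_{S^1(J)}\le 2C\|\phi\|_{H^1},\ \|u\|_{Z(J)}\le 2\eta\}$, where $J=[-T,T]$ and $\eta$ is small. The metric is $d(u,v)=\|u-v\|_{S^0(J)}$; the set is closed for this metric by weak lower semicontinuity.

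The key nonlinear estimates come from Hölder's inequality together with the fractional chain rule for $(-\Delta_g)^{1/2}$. The chain rule can be replaced by $|\nabla_g(|u|^{4/(d-2)}u)|\lesssim |u|^{4/(d-2)}|\nabla_g u|$ and the equivalence $\|f\|_{W^{1,p}}\approx\|\nabla_g f\|_{L^p}$. We use the dual admissible pair $L^2_tL^{2d/(d+2)}_x$ and the auxiliary norm $W(J)=L^{\frac{2(d+2)}{d-2}}_tW^{1,\frac{2d(d+2)}{d^2+4}}_x$, which is controlled by $S^1(J)$ through interpolation. This gives
\[
\bigl\|\nabla_g(|u|^{\frac4{d-2}}u)\bigr\|_{L^{\frac{2(d+2)}{d+4}}_tL^{\frac{2d(d+2)}{d^2+3d+4}}_x}\lesssim \|u\|_{Z(J)}^{\frac4{d-2}}\|u\|_{W(J)}.
\]
The left-hand norm is itself a dual Strichartz norm obtained from \eqref{dispersive-e} by the Keel--Tao argument. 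An analogous estimate holds for the difference $|u|^{4/(d-2)}u-|v|^{4/(d-2)}v$ in $N^0$.

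Smallness comes from the linear part. Since $\|e^{it\Delta_g}\phi\|_{Z(\mathbb{R})}\lesssim\|\phi\|_{H^1}$ by Strichartz and Sobolev, dominated convergence lets us choose $T$ with $\|e^{it\Delta_g}\phi\|_{Z(J)}\le\eta$. Then $\Phi$ maps $X(J)$ to itself and is a contraction. We note that $\eta$ depends on $\|\phi\|_{H^1}$, while $T$ depends on the profile of $\phi$ and not only on its norm; this is the critical feature. Continuous dependence follows from the same estimates. Uniqueness in $C(J;H^1)$ follows by first showing that any such solution lies in $S^1$ on short intervals, via the Strichartz bounds and continuity of $t\mapsto u(t)$ in $H^1$.

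For the maximal solution, take the union of all solutions in $\mathcal{P}$; they agree on overlaps by uniqueness. Conservation of $M$ and $E$ is proved first for smooth $H^2$ data, where the solution is regular and the identities follow by differentiating and integrating by parts on $\mathbb{H}^d$. The general case then follows by continuous dependence.

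For the blow-up criterion, suppose $I_+=[0,T_+)$ is bounded but $\|u\|_{Z(I_+)}<\infty$. Split $I_+$ into finitely many subintervals with small $Z$ norm. On each subinterval the nonlinear estimate bounds $\|u\|_{S^1}$, so $\|u\|_{S^1(I_+)}<\infty$. Duhamel's formula then shows that $e^{-it\Delta_g}u(t)$ is Cauchy in $H^1$ as $t\to T_+$. Hence the linear evolution from near $T_+$ has small $Z$ norm on a uniform interval, and $u$ extends past $T_+$, contradicting maximality.

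The main obstacle is the nonlinear estimate for non-integer powers when $d\ge 7$. There $4/(d-2)<1$, so the difference estimate must be carried out only at the $S^0$ level rather than in $S^1$; this is why we contract in the $S^0$ metric.
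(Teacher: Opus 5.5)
Your overall route (Duhamel contraction in Strichartz spaces with an $S^0$ metric, gluing local solutions by uniqueness, conservation laws by approximation, and the blow-up criterion by cutting $I_+$ into pieces of small $Z$ norm) is the standard argument the paper has in mind when it says the proposition follows from the Strichartz estimates. However, the self-mapping step fails for $d\ge 6$ and large data as you set it up. In your ball, the only control on the Duhamel term in $Z$ comes from Sobolev and the derivative estimate:
\[
\bigl\|\Phi(u)-e^{it\Delta_g}\phi\bigr\|_{Z(J)}\lesssim \|u\|_{Z(J)}^{\frac{4}{d-2}}\|u\|_{W(J)}\lesssim \eta^{\frac{4}{d-2}}\|\phi\|_{H^1}.
\]
Here $\|u\|_{W(J)}$ is only bounded by $\|u\|_{S^1(J)}\lesssim\|\phi\|_{H^1}$ and is not small. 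Keeping $\|\Phi(u)\|_{Z(J)}\le 2\eta$ therefore needs $\eta^{\frac{4}{d-2}-1}\|\phi\|_{H^1}\lesssim 1$. Shrinking $\eta$ achieves this only if $\frac{4}{d-2}>1$, i.e. $d=4,5$. At $d=6$ it forces $\|\phi\|_{H^1}\lesssim 1$, and for $d\ge 7$ a smaller $\eta$ makes it worse. The obstacle you identified (the difference estimate when $d\ge 7$) is real, and your $S^0$ metric handles it, but it is not the step that breaks.

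The repair is to place the smallness on a derivative-level Strichartz norm. Take the ball $\{\|u\|_{S^1(J)}\le 2C\|\phi\|_{H^1},\ \|u\|_{W(J)}\le 2\eta\}$ and choose $T$ by dominated convergence so that $\|e^{it\Delta_g}\phi\|_{W(J)}\le\eta$; this is possible because $\|e^{it\Delta_g}\phi\|_{W(\mathbb{R})}\lesssim\|\phi\|_{H^1}$. Since $\|u\|_{Z}\lesssim\|u\|_{W}$, the nonlinear term obeys $\|\nabla_g(|u|^{\frac{4}{d-2}}u)\|_{L^2_tL^{\frac{2d}{d+2}}_x}\lesssim\|u\|_{W(J)}^{\frac{d+2}{d-2}}$, which is superlinear in $\eta$ in every dimension. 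Your $S^0$ difference bound $\||u|^{\frac{4}{d-2}}u-|v|^{\frac{4}{d-2}}v\|_{L^{\frac{2(d+2)}{d+4}}_{t,x}}\lesssim(\|u\|_{Z}+\|v\|_{Z})^{\frac{4}{d-2}}\|u-v\|_{L^{\frac{2(d+2)}{d}}_{t,x}}$ still contracts. Your continuation past $T_+$ must then use smallness of the linear evolution in $W$ rather than in $Z$; the Cauchy property of $e^{-it\Delta_g}u(t)$ gives this equally well. The pure $Z$-smallness criterion, used later in Lemma \ref{le.5v1}, comes for $d\ge 6$ from the perturbation theory of Proposition \ref{pr.2v6}, not from a direct contraction.

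Two smaller corrections. First, Hölder with your exponents puts the nonlinearity in $L^2_tL^{\frac{2d}{d+2}}_x$, not in the norm you displayed, which is dual admissible only for $d=4$. Second, for $d\ge 7$ your estimates give only $S^0$-Lipschitz dependence. So pass energy conservation from $H^2$ to $H^1$ data via weak lower semicontinuity of $E$ applied forward and backward in time, not via continuity of the flow in $H^1$.
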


\begin{proposition}[Stability]\label{pr.2v6}
	\label{prop:stability}
	Suppose that $I \subseteq \mathbb{R}$ is an open interval, $\rho \in \{0, 1\}$ and $\tilde{u} \in C(I; H^1(\mathbb{H}^d))$ satisfies the approximate NLS
	\begin{align*}
		i\partial_t \tilde{u} + \Delta_g \tilde{u} = \rho |\tilde{u}|^{\frac{4}{d-2}} \tilde{u} + e \quad \text{in } I \times \mathbb{H}^d.
	\end{align*}
	Moreover
	\begin{align}\label{pr.2v61}
		\|\tilde{u}\|_{L_{t,x}^{\frac{2(d+2)}{d-2}}} + \sup_{t \in I} \|\tilde{u}(t)\|_{H^1(\mathbb{H}^d)} \leq M, 
	\end{align}
	for some $M > 0$.
	For $0 < \epsilon < \epsilon_1$, $\epsilon_1 = \epsilon_1(M) > 0$ is a small constant. Let $t_0 \in I$ and $u(t_0) \in H^1(\mathbb{H}^d)$ be such that the smallness condition
	\begin{align}\label{pr.2v62}
		\|u(t_0) - \tilde{u}(t_0)\|_{H^1(\mathbb{H}^d)} + \|e\|_{N^1(I)} \leq \epsilon 
	\end{align}
	holds. Then there exists a solution $u \in C(I; H^1(\mathbb{H}^d))$ of the NLS
	\begin{align*}
		i\partial_t u + \Delta_g u = \rho |u|^{\frac{4}{d-2}} u \quad \text{in } I \times \mathbb{H}^d,
	\end{align*}
	and
	\begin{align*}
		\|u\|_{S^1(I)} + \|\tilde{u}\|_{S^1(I)} \leq C(M), \qquad \|u - \tilde{u}\|_{S^1(I)} \leq C(M)\epsilon.
	\end{align*}
\end{proposition}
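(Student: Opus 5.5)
The plan is the standard perturbative argument for energy-critical NLS: short-time perturbation first, then iteration over a partition of $I$. It relies on the Strichartz estimates of the previous proposition and the nonlinear estimate
\begin{align*}
\bigl\| \nabla_g\bigl(|u|^{\frac{4}{d-2}}u - |\tilde u|^{\frac{4}{d-2}}\tilde u\bigr)\bigr\|_{L^2_tL^{\frac{2d}{d+2}}_x}
\lesssim \bigl(\|u\|_{X}+\|\tilde u\|_{X}\bigr)^{\frac{4}{d-2}} \|w\|_{S^1} + (\text{lower order}),
\end{align*}
where $w=u-\tilde u$ and $X = L^{\frac{2(d+2)}{d-2}}_tW^{1,\frac{2d(d+2)}{d^2+4}}_x$. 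The space $X$ is controlled by $S^1$ via interpolation, and by Sobolev embedding it controls the $Z$ norm. When $\rho=0$ the equation for $w$ is linear, so the statement follows directly from Strichartz. From now on we take $\rho=1$.

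\textbf{Step 1: $\tilde u$ in $S^1$.} We first upgrade \eqref{pr.2v61} to $\|\tilde u\|_{S^1(I)}\le C(M)$. Split $I$ into $J=J(M,\eta)$ subintervals $I_j$ on which $\|\tilde u\|_{Z(I_j)}\le\eta$. On each $I_j$, Strichartz and the fractional chain rule give
\begin{align*}
\|\tilde u\|_{S^1(I_j)}\lesssim M+\eta^{\frac{4}{d-2}}\|\tilde u\|_{S^1(I_j)}+\epsilon.
\end{align*}
The $Z$ norm enters through Sobolev embedding $\|\tilde u\|_{X}\lesssim\|\tilde u\|_{S^1}$ and the usual Hölder splitting of $|\tilde u|^{\frac{4}{d-2}}\nabla\tilde u$. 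A continuity argument then bounds $\|\tilde u\|_{S^1(I_j)}$, and summing over the $J$ pieces gives the claim.

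\textbf{Step 2: short-time stability.} Fix one $I_j$ with $\|\tilde u\|_{X(I_j)}\le\eta$. The difference satisfies
\begin{align*}
i\partial_tw+\Delta_gw=\bigl(|\tilde u+w|^{\frac{4}{d-2}}(\tilde u+w)-|\tilde u|^{\frac{4}{d-2}}\tilde u\bigr)-e.
\end{align*}
Estimating the Duhamel formula in $S^1(I_j)$ gives
\begin{align*}
\|w\|_{S^1(I_j)}\lesssim\|w(t_j)\|_{H^1}+\|e\|_{N^1}+\eta^{\frac{4}{d-2}}\|w\|_{S^1}+\|w\|_{S^1}^{\frac{d+2}{d-2}}+(\text{mixed terms}).
\end{align*}
If $\eta$ and $\|w(t_j)\|_{H^1}+\|e\|_{N^1}$ are small, a bootstrap yields $\|w\|_{S^1(I_j)}\lesssim\|w(t_j)\|_{H^1}+\|e\|_{N^1}$. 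Existence of $u$ on $I_j$ follows from this a priori bound together with Proposition \ref{pr.2v5}: the $Z$ norm of $u$ stays finite, so $u$ cannot blow up inside $I_j$.

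\textbf{Step 3: iteration and the main obstacle.} Iterating across the $J$ intervals, starting from the one containing $t_0$ and moving forward and backward, the error grows geometrically: $\|w(t_{j+1})\|_{H^1}\le C^{j}\epsilon$. We therefore choose $\epsilon_1(M)$ small enough that $C^{J}\epsilon_1$ stays below the Step 2 threshold.

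The main obstacle is the derivative of the difference of nonlinearities when $d>6$. There the power $\frac{4}{d-2}<1$ is not Lipschitz, so $\nabla(F(\tilde u+w)-F(\tilde u))$ contains a term like $|w|^{\frac{4}{d-2}}\nabla\tilde u$, which cannot be bounded linearly in $w$. The plan is to follow the Euclidean remedy of Tao–Visan: run the bootstrap in a weaker norm with fractional derivative $s<1$ (an exotic Strichartz space) where the Hölder continuity of $F'$ suffices. We then recover the full $S^1$ bound for $w$ a posteriori from the equation, using Step 1. On $\mathbb{H}^d$ the fractional chain rules needed for this can be obtained from the Stanton–Tomas multiplier theorem \eqref{eq1.2v13} applied to $(-\Delta_g)^{s/2}$, combined with the heat-kernel square-function substitute $P_N$, in place of Euclidean Littlewood–Paley theory. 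For $4\le d\le6$ this complication disappears and the argument reduces to Step 2 as written.
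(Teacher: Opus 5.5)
\textbf{Overall.} Your outline matches the paper's. The paper's proof of this proposition is only a remark: $4\le d\le 6$ follows from the usual critical iteration, and $d\ge 7$ from the exotic-Strichartz method of Tao--Visan. For $4\le d\le 6$ your Steps 1--3 are a correct version of that iteration, since $F'$ is locally Lipschitz there and the difference of nonlinearities is linear in $w$.

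\textbf{The gap for $d\ge 7$.} There, your argument does not produce the stated linear bound $\|u-\tilde u\|_{S^1(I)}\le C(M)\epsilon$. After the bootstrap in the fractional exotic norm, recovering the $S^1$ norm of $w$ from the Duhamel formula leaves the term $(F'(\tilde u+w)-F'(\tilde u))\nabla_g\tilde u$. Since $F'$ is only H\"older of order $\frac{4}{d-2}<1$, the natural bound is
\begin{align*}
\bigl\|(F'(\tilde u+w)-F'(\tilde u))\nabla_g\tilde u\bigr\|_{L^2_tL^{\frac{2d}{d+2}}_x}\lesssim\|w\|_{Z}^{\frac{4}{d-2}}\,\|\nabla_g\tilde u\|_{L^{\frac{2(d+2)}{d-2}}_tL^{\frac{2d(d+2)}{d^2+4}}_x}.
\end{align*}
The linear-in-$w$ alternative $|w|\,|\tilde u|^{\frac{4}{d-2}-1}|\nabla_g\tilde u|$ is singular where $\tilde u$ vanishes. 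So even granting $\|w\|_Z\lesssim\epsilon$, you get only $\|w\|_{S^1}\lesssim C(M)\epsilon^{\frac{4}{d-2}}$. This is why the Tao--Visan stability theorem in high dimensions is H\"older, of the form $C\epsilon^{c(d)}$ with $c(d)<1$, rather than Lipschitz.

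\textbf{Step 3 also breaks for $d\ge 7$.} If each short-time step turns $H^1$-smallness $\epsilon_j$ into roughly $\epsilon_j^{c}$ at the next node, propagating this across $J(M)$ intervals degrades the exponent to $c^{J(M)}$. In Tao--Visan the iteration is run in the exotic norm itself. There the short-time estimate is linear, using the fractional chain rule for H\"older continuous functions at regularity below $\frac{4}{d-2}$ together with the exotic inhomogeneous Strichartz estimate. Meanwhile $\|w\|_{S^1}$ is only carried along as a bounded quantity, and smallness in $S^1$ is extracted once at the end, with a H\"older loss.

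\textbf{Relation to the paper.} The paper's proof is just this citation, so it does not justify the linear bound for $d\ge 7$ either. The justified statement there has $C(M)\epsilon^{c(d)}$ in place of $C(M)\epsilon$. That weaker form suffices for how the proposition is used later, which needs only uniform $S^1$ bounds and closeness tending to zero.

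\textbf{Unproved ingredients.} Both you and the paper take for granted the $\mathbb{H}^d$ versions of the exotic Strichartz estimate and of the H\"older fractional chain rule for $(-\Delta_g)^{s/2}$. Since $P_N$ is not a genuine frequency projection, these need an actual proof rather than an appeal to the Stanton--Tomas theorem.
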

For $4 \le d \le 6$, the stability can be obtained by the usual critical iteration. When $d\ge 7$, we can use the arguments similar to the exotic Strichartz estimates obtained by Tao and Visan \cite{TV2007} to close Proposition \ref{pr.2v6}.

Next, we will state the global Morawetz estimate which was shown in  \cite{Ionescu2009} for sub-critical case. After a careful checking, we believe that the proof is still carried out for critical setting. So we omit the details here.

\begin{proposition}[Global Morawetz estimate, \cite{Ionescu2009}]\label{pr.2v7}
	Let $I \subseteq \mathbb{R}$ be an open interval, then
	\begin{align*}
		i\partial_t u + \Delta_g u = |u|^{\frac{4}{d-2}} u \quad \text{in } I \times \mathbb{H}^d,
	\end{align*}
	admits a unique solution $u \in C(I: H^1(\mathbb{H}^d))$. In addition
	\begin{align}\label{pr.2v71}
		\|u\|_{L^{\frac{2d}{d-2}}([t_1, t_2]\times \mathbb{H}^d)}^{\frac{2d}{d-2}} \lesssim \sup_{t \in [t_1, t_2]} \|u(t)\|_{L^2(\mathbb{H}^d)} \|u(t)\|_{H^1(\mathbb{H}^d)},
	\end{align}
	where $t_1, t_2\in I$.
\end{proposition}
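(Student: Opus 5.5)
The estimate \eqref{pr.2v71} comes from a Morawetz identity with the radial weight $a(x)=d(x,\mathbf{0})$, following the subcritical argument of \cite{Ionescu2009}; the only new point is to check that nothing uses subcriticality. By the local theory (Proposition \ref{pr.2v5}) and the stability result (Proposition \ref{pr.2v6}), I will approximate $u$ on $[t_1,t_2]$ in $C([t_1,t_2];H^1)$ by solutions with smoother data. So it suffices to prove the identity for smooth, decaying solutions and pass to the limit; the final bound involves only $H^1$ quantities and the $L^{2d/(d-2)}_{t,x}$ norm, and both are continuous under this approximation.

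For such solutions set
\begin{align*}
M_a(t)=2\Im\int_{\mathbb{H}^d} D^\alpha a\,\overline{u}\,D_\alpha u\,d\mu ,
\end{align*}
as in the proof of Lemma \ref{le.2v4}. Differentiating in time and using the equation (nonlinearity $|u|^{p-1}u$, $p=\frac{d+2}{d-2}$) gives
\begin{align*}
M_a'(t)=4\Re\int D^\alpha D^\beta a\,D_\alpha\overline{u}\,D_\beta u\,d\mu-\int \Delta_g^2 a\,|u|^2\,d\mu+\frac{2(p-1)}{p+1}\int \Delta_g a\,|u|^{p+1}\,d\mu .
\end{align*}
The geometry of $\mathbb{H}^d$ enters through three facts about $a(x)=r=d(x,\mathbf{0})$:
\begin{itemize}
\item[(i)] $|\nabla_g a|=1$;
\item[(ii)] $D^2a=\coth r\,(g-dr\otimes dr)\ge 0$, so the Hessian term is nonnegative;
\item[(iii)] $\Delta_g a=(d-1)\coth r\ge d-1$.
\end{itemize}
The third fact makes the potential term coercive: $\int\Delta_g a\,|u|^{p+1}\ge (d-1)\int|u|^{2d/(d-2)}$. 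For the bilaplacian term, a direct computation gives $\Delta_g^2 r=-(d-1)(d-3)\coth r\,\sinh^{-2}r$ away from the origin, plus a positive point mass at $\mathbf{0}$ when $d=3$. For $d\ge4$ the function is singular like $-r^{-3}$, which is integrable, and it has a sign: $-\int\Delta_g^2 a\,|u|^2\ge0$. So it can be dropped (or controlled by Hardy's inequality). To avoid this computation, I can instead replace $a$ by the smoothed weight $\sqrt{1+r^2}$ or $\log\cosh r$. These keep (ii) and (iii) up to constants, and for them $\Delta_g^2 a$ is bounded, with its contribution handled using $\|u\|_{L^2}$. I would check which choice is cleanest.

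Integrating $M_a'$ over $[t_1,t_2]$ and dropping the nonnegative terms gives
\begin{align*}
\frac{2(p-1)(d-1)}{p+1}\int_{t_1}^{t_2}\!\!\int|u|^{\frac{2d}{d-2}}\,d\mu\,dt\le |M_a(t_2)|+|M_a(t_1)|\le 4\sup_{t\in[t_1,t_2]}\|u(t)\|_{L^2}\|\nabla_g u(t)\|_{L^2},
\end{align*}
using $|\nabla a|\le1$ and Cauchy--Schwarz. This is \eqref{pr.2v71}.

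The main obstacle is the justification step at energy-critical regularity. In the subcritical case the needed regularity is easy, whereas here the time derivative of $M_a$ and the integrations by parts against the singular weight near $\mathbf{0}$ must be justified with only $u\in C(I;H^1)$ and $u\in S^1(J)$ on compact $J$. I will handle this by regularizing in two ways:
\begin{itemize}
\item truncating $a$ near $\mathbf{0}$ and at infinity, with errors controlled by the Hardy inequality on $\mathbb{H}^d$ and by the decay of $\coth r-1$;
\item mollifying the data, using the stability of Proposition \ref{pr.2v6} to pass to the limit, noting that Strichartz control gives $u\in L^{2d/(d-2)}_{t,x}$ locally via interpolation.
\end{itemize}
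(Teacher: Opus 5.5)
Your argument is correct and is essentially the Morawetz computation that the paper defers to \cite{Ionescu2009} without giving details. With $|\nabla_g a|\le 1$, $D^2a\ge 0$, $\Delta_g a\ge d-1$ and $-\Delta_g^2 a\ge 0$, you drop the Hessian and bilaplacian terms and bound the flux by $\sup_t\|u\|_{L^2}\|\nabla_g u\|_{L^2}$, which gives \eqref{pr.2v71} uniformly in $|t_2-t_1|$; for $a=r$ the bilaplacian term must be dropped by its sign, since Hardy's inequality does not control the weight $r^{-3}$ by $H^1$. One caution on your smooth alternative $a=\log\cosh r$: mere boundedness of $\Delta_g^2 a$ would leave an error $\lesssim |t_2-t_1|\sup_t\|u(t)\|_{L^2}^2$ and destroy that uniformity, but in fact $\Delta_g^2(\log\cosh r)=\cosh^{-2}r\,(6-2d-6\cosh^{-2}r)<0$, so that weight works too and also removes your justification issue at the origin.
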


Now, let us recall the conserved energy $E(u)$ from \eqref{eq1.1v2}. For each $E \in [0, \infty)$, we set
\begin{align*}
	S(E) = \sup\{ \|u\|_{Z(I)} : E(u) \leq E \},
\end{align*}
where the supremum is taken over all solutions $u \in C(I : H^1(\mathbb{H}^d))$ defined on some interval $I$ with energy not exceeding $E$. We also define
\begin{align*}
	E_{\max} = \sup\{ E : S(E) < \infty \}.
\end{align*}
By applying Proposition \ref{pr.2v6} with $\tilde{u} \equiv 0$, $e \equiv 0$, $I = \mathbb{R}$, $M = 1$, $\epsilon \ll 1$, it follows that $E_{\max} > 0$. Proposition \ref{pr.2v5} then implies that if $u$ solves \eqref{eq1.1v1} and $E(u) < E_{\max}$, then $u$ can be extended to a global solution that scatters.

If $E_{\max} = +\infty$, then Theorem \ref{th1v1} follows immediately from Propositions \ref{pr.2v5} and \ref{pr.2v6}. On the other hand, assuming $E_{\max} < +\infty$, there exists a sequence of solutions that satisfy the assumptions of the following key proposition to be proved later.

\begin{proposition}\label{pr.2v8}
	Consider a sequence of nonlinear solutions $u_k \in C((-T_k, T^k) : H^1(\mathbb{H}^d))$, $k = 1, 2, \dots$, to the nonlinear equation
	\begin{align*}
		i\partial_t u + \Delta_g u = |u|^{\frac{4}{d-2}} u,
	\end{align*}
	each defined on an open interval $(-T_k, T^k)$. Suppose that their energies satisfy $E(u_k) \to E_{\max}$. Choose a sequence of times $t_k \in (-T_k, T^k)$ such that
	\begin{align}\label{pr.2v81}
		\lim_{k \to \infty} \|u_k\|_{Z(-T_k, t_k)} = \lim_{k \to \infty} \|u_k\|_{Z(t_k, T^k)} = +\infty. 
	\end{align}
	Then, after possibly passing to a subsequence, there exist a function $w_0 \in H^1(\mathbb{H}^d)$ and a sequence of isometries $h_k \in \mathbb{G}$ for which
	\begin{align*}
		u_k(t_k, h_k^{-1} \cdot x) \to w_0(x)
	\end{align*}
	 strongly in $H^1(\mathbb{H}^d)$.
\end{proposition}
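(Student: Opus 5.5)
We prove Proposition \ref{pr.2v8} by the standard concentration-compactness argument of Kenig--Merle type, adapted to $\mathbb{H}^d$ as in \cite{Ionescu2012}. The two main inputs are a linear profile decomposition for $H^1$-bounded sequences on $\mathbb{H}^d$ (Section \ref{sec:Prodechs}) and the Euclidean approximation of nonlinear profiles (Section \ref{sec:Eucapp}). By time translation we may assume $t_k=0$. Energy conservation and the Sobolev embedding give $\sup_k\|u_k(0)\|_{H^1}<\infty$ and $E(u_k(0))\to E_{\max}$. We then apply the profile decomposition to $\phi_k:=u_k(0)$:
\begin{align*}
\phi_k=\sum_{\alpha\le J} \tilde\psi^{\alpha}_k+\sum_{\beta\le J}\tilde\phi^{\beta}_k+R^J_k .
\end{align*}
Here the $\tilde\psi^\alpha_k$ are Euclidean profiles: they concentrate at scales $N_k^\alpha\to\infty$ around points $h_k^\alpha\cdot\mathbf 0$, with time shifts $t^\alpha_k$, and have the form $\pi_{h^\alpha_k}e^{-it^\alpha_k\Delta_g}\tilde\pi\bigl((N^\alpha_k)^{\frac{d-2}{2}}\phi^\alpha(N^\alpha_k\cdot)\bigr)$. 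The $\tilde\phi^\beta_k=\pi_{h^\beta_k}e^{-it^\beta_k\Delta_g}\phi^\beta$ are hyperbolic profiles at frequency $1$. The remainder satisfies $\limsup_J\limsup_k\|e^{it\Delta_g}R^J_k\|_{Z(\mathbb{R})}=0$. Finally, we have the almost-orthogonality of the $H^1$ norms and of the energies:
\begin{align*}
E(\phi_k)=\sum_{\alpha\le J}E(\tilde\psi^\alpha_k)+\sum_{\beta\le J}E(\tilde\phi^\beta_k)+E(R^J_k)+o_k(1).
\end{align*}
The smallness of the remainder in $Z$ comes from Lemma \ref{le.2v3}(2) through a sup of $N^{-\frac{d-2}{2}}|P_Nf(x)|$. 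The energy decoupling uses the local smoothing Lemma \ref{le.2v4} to pass from weak $H^1$ convergence to the $L^{\frac{2d}{d-2}}$ decoupling.

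\textbf{Case 1: more than one profile, or a profile carrying energy strictly less than $E_{\max}$.} If every profile has energy $<E_{\max}-\delta$, we construct nonlinear profiles. For hyperbolic profiles, we use the definition of $E_{\max}$: the corresponding nonlinear solution $U^\beta$ (with data at time $-t^\beta_k$, taking a wave operator if $t^\beta_k\to\pm\infty$) is global with $\|U^\beta\|_{Z(\mathbb{R})}\le S(E(\phi^\beta))<\infty$. For Euclidean profiles, we use the results of Section \ref{sec:Eucapp} (relying on Ryckman--Visan \cite{Ryckman2007,Visan2007}). These give global hyperbolic solutions $U^\alpha_k$ with uniform $Z$ and $S^1$ bounds, well approximated near the concentration point by the rescaled Euclidean solution; this holds regardless of energy size, since Euclidean scattering holds for all energies. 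We then form the approximate solution
\begin{align*}
U^J_k=\sum_\alpha U^\alpha_k+\sum_\beta U^\beta_k+e^{it\Delta_g}R^J_k .
\end{align*}
The orthogonality of the parameters (scales, cores $h_k^{-1}h_k'$ diverging in $\mathbb{G}$, or time shifts) gives $\|U^J_k\|_{Z}\lesssim 1$ uniformly in $J$, by decoupling $\|\sum\cdot\|_Z^{\frac{2(d+2)}{d-2}}$. It also gives that the error $e=|U^J_k|^{\frac4{d-2}}U^J_k-\sum(\text{individual nonlinearities})$ is $o(1)$ in $N^1$ as $k\to\infty$ and then $J\to\infty$. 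Proposition \ref{pr.2v6} then yields $\|u_k\|_{Z(\mathbb{R})}\lesssim1$, contradicting \eqref{pr.2v81}.

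\textbf{Case 2: a single profile with full energy.} Then there is exactly one profile, and $\|R^1_k\|_{H^1}\to0$ by energy decoupling, since $E$ controls $\|\cdot\|_{\dot H^1}^2$ and the spectral gap $\rho>0$ controls the $L^2$ part. A Euclidean profile is excluded, because by Section \ref{sec:Eucapp} its nonlinear solution has bounded $Z$ norm, and stability again contradicts \eqref{pr.2v81}. So the profile is hyperbolic, $\phi_k=\pi_{h_k}e^{-it_k\Delta_g}\phi+o_{H^1}(1)$. If $t_k\to+\infty$, then $\|e^{it\Delta_g}\phi_k\|_{Z(-\infty,0)}\to0$, and stability with $\tilde u=e^{it\Delta_g}\phi_k$ gives $\|u_k\|_{Z(-T_k,0)}$ small, again a contradiction; the case $t_k\to-\infty$ is symmetric. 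Hence $t_k$ is bounded, and passing to a subsequence $t_k\to t_\ast$. Therefore $u_k(0,h_k\cdot x)\to w_0:=e^{-it_\ast\Delta_g}\phi$ in $H^1$. This is the claim, with the isometries $h_k^{-1}$ as in the statement.

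\textbf{Main obstacle.} The delicate step is the error estimate for the cross terms in $N^1$ in Case 1, especially for $d\ge7$. There the nonlinearity $|u|^{\frac4{d-2}}u$ is not $C^2$, so the gradient of the nonlinearity cannot be expanded naively. We would handle it as in Tao--Visan \cite{TV2007}: estimate the error in a weaker exotic Strichartz norm and use the fractional-chain-rule variant of stability. The interactions between Euclidean profiles and hyperbolic profiles or the remainder need the localization provided by Section \ref{sec:Eucapp}, namely the approximation by compactly supported rescaled Euclidean solutions on $|t|\le T(N^\alpha_k)^{-2}$ plus small linear tails outside.
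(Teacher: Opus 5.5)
Your overall scheme is the paper's. It consists of time translation, the profile decomposition of $u_k(0)$, nonlinear profiles (built from the definition of $E_{\max}$ for hyperbolic frames and from Lemma \ref{le.5v2} for Euclidean ones at any energy), the approximate solution $\sum_\gamma U^\gamma_k+e^{it\Delta_g}R^J_k$, and Proposition \ref{pr.2v6}.

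Your Case 2 supplies something the paper's written proof omits. The paper runs the argument only under the implicit assumption that every hyperbolic profile has energy below $E_{\max}-\eta$; this is the $\eta$ in its constants and in the hypothesis of Lemma \ref{le.5v3}. It then stops at a contradiction with \eqref{pr.2v81}, which by itself produces neither $w_0$ nor $h_k$. Your Case 2 steps are the right completion, as in \cite{Ionescu2012}:
\begin{itemize}
\item the exclusion of a lone Euclidean profile via Lemma \ref{le.5v2} and stability;
\item the exclusion of $t_k\to\pm\infty$ via Lemma \ref{le.5v1};
\item the limit $\pi_{h_k^{-1}}u_k(0)\to e^{-it_\ast\Delta_g}\phi$.
\end{itemize}
You should make explicit why your two cases are exhaustive. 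The limiting profile energies are summable with sum at most $E_{\max}$, so if none equals $E_{\max}$, their supremum is attained and equals some $E_{\max}-\delta$.

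The step that does not go through as you justify it is the $N^1$-smallness of the Case 1 error for the terms containing the remainder. Orthogonality of frames (Lemma \ref{le.5v3}) only handles profile--profile interactions. The $Z$-smallness of $e^{it\Delta_g}R^J_k$ disposes of every term except $|U^{\mathrm{app}}_k|^{\frac{4}{d-2}}\nabla_g e^{it\Delta_g}R^J_k$. There the derivative lands on the remainder, which is bounded but not small in any Strichartz norm. As in the paper, this reduces to $\lim_J\limsup_k\|U^\gamma_k\nabla_g e^{it\Delta_g}R^J_k\|_{L^{\frac{d+2}{d-1}}_{t,x}}=0$ for each fixed $\gamma$.

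Localizing $U^\gamma_k$ to $S^R_{N_k,h_k}$ is not enough. For Euclidean profiles this uses Section \ref{sec:Eucapp}; for hyperbolic profiles, which your description does not cover, it uses a $C_0^\infty$ approximation of the limiting solution $W^\nu$. Either way, it only reduces matters to showing $N_k\|\nabla_g e^{it\Delta_g}R^J_k\|_{L^2_{t,x}(S^R_{N_k,h_k})}\to0$. For that you need the local smoothing estimate, Lemma \ref{le.2v4}. It converts $Z$-smallness of the linear remainder into local $L^2$-smallness of its gradient: low frequencies are handled through the $Z$ norm, high frequencies through the Morawetz-type gain.

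You attached Lemma \ref{le.2v4} to the energy decoupling instead, where it plays no role. The $L^{\frac{2d}{d-2}}$ decoupling comes from Lemma \ref{le.4v4}(ii) together with the smallness of $\|R^J_k\|_{L^{\frac{2d}{d-2}}}$ from Lemma \ref{le.2v3}(2). Once local smoothing is used in the remainder-interaction estimate, your argument closes.
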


 We will gather the main tools in the next two sections and complete the proof of Proposition \ref{pr.2v8} in Section \ref{sec:ProofMainThm}.

\section{Euclidean Approximations}\label{sec:Eucapp}

This section is devoted to deriving precise estimates that allow us to compare, for both linear and nonlinear Schr\"odinger equations, solutions on Euclidean spaces with those on hyperbolic spaces. Since the Euclidean and hyperbolic geometries differ greatly at large scales, such a comparison makes sense only for rescaled data that are concentrated at a point.
We take a spherically symmetric function $\eta \in C_0^\infty(\mathbb{R}^d)$ supported in the ball of radius $2$ and identically equal to $1$ on the ball of radius $1$. For any $\phi \in \dot{H}^1(\mathbb{R}^d)$ and any real number $N \geq 1$, we set
\begin{equation}\label{eq1.3v1}
	\begin{aligned}
		 Q_N \phi \in C_0^\infty(\mathbb{R}^d), \quad& (Q_N \phi)(x) = \eta(x/N^{1/2}) \cdot (e^{\Delta/N} \phi)(x), \\
		 \phi_N \in C_0^\infty(\mathbb{R}^d),\quad &\phi_N(x) = N^{\frac{d-2}{2}} (Q_N \phi)(Nx),  \\
		 f_N \in C_0^\infty(\mathbb{R}^d),\quad &f_N(y) = \phi_N(\Psi_I^{-1}(y)), 
	\end{aligned}
\end{equation}
where $\Psi_I$ is defined in \eqref{diff.2v4}. Thus, $Q_N \phi$ is a regularized, compactly supported approximation of the profile $\phi$, $\phi_N$ is an $\dot{H}^1$-invariant rescaling of $Q_N \phi$, and $f_N$ is the function obtained by pulling $\phi_N$ back to a neighbourhood of $\mathbf{0}$ in $\mathbb{H}^d$. We also define
\begin{align*}
	E_{\mathbb{R}^d}(\phi) = \frac{1}{2} \int_{\mathbb{R}^d} |\nabla \phi|^2 \, dx + \frac{d-2}{2d} \int_{\mathbb{R}^d} |\phi|^{\frac{2d}{d-2}} \, dx.
\end{align*}

We shall employ the main result of \cite{Ryckman2007,Visan2007} in the following formulation.

\begin{theorem}\label{th.3v1}
	Let $\psi \in \dot{H}^1(\mathbb{R}^d)$. Then there exists a unique global solution $v \in C(\mathbb{R} : \dot{H}^1(\mathbb{R}^d))$ of the initial value problem
	\begin{align}\label{eq1.3v2}
		\begin{cases}
			i \partial_t v + \Delta v = |v|^{\frac{4}{d-2}} v, \\
			v(0) = \psi,
		\end{cases}
	\end{align}
	and it satisfies
	\begin{align}\label{eq1.3v3}
		\|\nabla v\|_{L_t^\infty L_x^2 \cap L_t^2 L_x^{\frac{2d}{d-2}}(\mathbb{R}^d \times \mathbb{R})} \leq \tilde{C}(E_{\mathbb{R}^d}(\psi)).
	\end{align}
	Moreover, the solution scatters, and there exist $\psi^{\pm\infty} \in \dot{H}^1(\mathbb{R}^d)$ such that
	\begin{align}\label{eq1.3v4}
		\|v(t) - e^{it\Delta} \psi^{\pm\infty}\|_{\dot{H}^1(\mathbb{R}^d)} \to 0 \quad \text{as } t \to \pm\infty.
	\end{align}
	In addition, if $\psi \in H^s(\mathbb{R}^d)$ with $s \geq 0$ when $d=3,4$, or   $0 \leq s \leq 1 + \frac{4}{d-2}$ when $d\geq5$, then
	\begin{align*}
		\sup_{t \in \mathbb{ R }} \|v(t)\|_{H^s(\mathbb{R}^d)} \lesssim _{\|\psi\|_{H^s}}1.
	\end{align*}
\end{theorem}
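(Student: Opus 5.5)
This theorem is imported as a black box; the paper only says it will "employ the main result of \cite{Ryckman2007,Visan2007} in the following formulation". So the plan is not to reprove global well-posedness on $\mathbb{R}^d$. Instead I would check that the stated formulation follows from those results together with standard persistence-of-regularity arguments. The first three assertions are exactly the main theorems of Ryckman--Visan ($d=4$) and Visan ($d\ge5$):
\begin{itemize}
\item existence and uniqueness of a global solution in $C(\mathbb{R}:\dot H^1)$;
\item the global spacetime bound $\|v\|_{L^{2(d+2)/(d-2)}_{t,x}}\le C(E_{\mathbb{R}^d}(\psi))$;
\item scattering in $\dot H^1$.
\end{itemize}
The only step needed for \eqref{eq1.3v3} is to upgrade the $L^{2(d+2)/(d-2)}_{t,x}$ bound to the full Strichartz bound on $\nabla v$. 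I would partition $\mathbb{R}$ into $J=J(E)$ intervals $I_j$ on each of which $\|v\|_{L^{2(d+2)/(d-2)}_{t,x}(I_j)}\le\delta$. On each interval, the Strichartz estimate and the fractional chain rule give
\[
\|\nabla v\|_{S^0(I_j)}\lesssim \|v(t_j)\|_{\dot H^1}+\delta^{\frac4{d-2}}\|\nabla v\|_{S^0(I_j)}.
\]
A bootstrap then closes on each interval, and summing over the $J$ intervals gives \eqref{eq1.3v3}. Scattering \eqref{eq1.3v4} follows from this bound: $e^{-it\Delta}v(t)$ is Cauchy in $\dot H^1$ as $t\to\pm\infty$, since the Duhamel tail is controlled by $\|\nabla(|v|^{4/(d-2)}v)\|_{L^2_tL^{2d/(d+2)}_x}$ over $[t,\infty)$.

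For the persistence of $H^s$ regularity I would again work interval by interval, using the same partition. On $I_j$, apply $\langle\nabla\rangle^s$ to the Duhamel formula and use the fractional chain rule:
\[
\|\langle\nabla\rangle^s(|v|^{\frac4{d-2}}v)\|_{L^2_tL^{\frac{2d}{d+2}}_x}\lesssim \|v\|_{L^{\frac{2(d+2)}{d-2}}_{t,x}}^{\frac4{d-2}}\|\langle\nabla\rangle^s v\|_{L^2_tL^{\frac{2d}{d-2}}_x},
\]
or its $\dot S^1$-based analogue. The small factor $\delta^{4/(d-2)}$ lets the bootstrap close on each interval, and iterating over the $J$ intervals gives $\sup_t\|v(t)\|_{H^s}\lesssim_{\|\psi\|_{H^s}}1$.

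The main obstacle is the restriction on $s$ when $d\ge5$. The nonlinearity $|v|^{4/(d-2)}v$ is then only $C^{1+4/(d-2)}$, so the fractional chain rule for Hölder-continuous derivatives applies only when $s\le 1+\frac4{d-2}$. This is precisely the range stated in the theorem. For $d=3,4$ the nonlinearity is a polynomial in $v,\bar v$, so $s\ge0$ is unrestricted. When $d\ge7$ the power is less than one, so the derivative of the nonlinearity is only Hölder continuous. In that case I would either invoke the fractional chain rule for Hölder-continuous functions as in Visan's thesis, or treat $0\le s\le1$ via $L^2$ and $\dot H^1$ interpolation and then climb to $s$ within the allowed range.
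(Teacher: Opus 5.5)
Your proposal matches the paper, which gives no proof of this statement and simply imports it from Ryckman--Visan ($d=4$) and Visan ($d\ge5$). The routine derivations you sketch are the standard way to get this formulation: splitting $\mathbb{R}$ into $J(E)$ intervals of small $L^{\frac{2(d+2)}{d-2}}_{t,x}$ norm and bootstrapping, scattering via the Duhamel tail, and interval-by-interval persistence of regularity.

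A few corrections are needed.
\begin{itemize}
\item Your displayed persistence estimate does not balance under H\"older. In time, $\frac{2}{d+2}+\frac12\neq\frac12$. The differentiated factor must go in $L^{\frac{2(d+2)}{d-2}}_tL^{\frac{2d(d+2)}{d^2+4}}_x$, exactly as in your $\dot H^1$ step.
\item For $d\ge7$ and $s>1$, the H\"older-continuous chain rule adds a lower-order term with no small factor. This is harmless, since that term is controlled by the bounds already obtained.
\item That chain rule only covers $s<1+\frac{4}{d-2}$ strictly. So the endpoint included in the statement, for $d=5$ and $d\ge7$, is not reached by the tool you cite.
\end{itemize}
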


\begin{lemma}\label{le.3v2}
	For any compact time interval $I$, we assume $v$ is the solution of \eqref{eq1.3v2} that yields
	\begin{align*}
		\left\| e^{\Delta/R} |v|^{\frac{4}{d-2}} v - |e^{\Delta/R} v|^{\frac{4}{d-2}} e^{\Delta/R} v \right\|_{L_t^2 W_x^{1, \frac{2d}{d+2}}(I \times \mathbb{R}^d)} \to 0, \quad \text{as } R \to \infty.
	\end{align*}
\end{lemma}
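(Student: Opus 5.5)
The argument is a density reduction combined with dominated convergence: $e^{\Delta/R}$ is an approximate identity that tends to the identity strongly in every $L^p$, $1<p<\infty$. Write $F(z)=|z|^{\frac{4}{d-2}}z$ and $I_R$ for the function whose norm appears in Lemma \ref{le.3v2}. We need to estimate both $I_R$ and $\nabla I_R$ in $L_t^2 L_x^{\frac{2d}{d+2}}$.

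By Theorem \ref{th.3v1}, $\nabla v\in L_t^2L_x^{\frac{2d}{d-2}}(I\times\mathbb{R}^d)$, and $v\in L_t^\infty \dot H^1$. Interpolating, and using Sobolev embedding together with the compactness of $I$, we get $v\in L_t^{\frac{2(d+2)}{d-2}}L_x^{\frac{2(d+2)}{d-2}}$ and $\nabla v\in L_t^{\frac{2(d+2)}{d-2}}L_x^{\frac{2d(d+2)}{d^2+4}}$.

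\textbf{The gradient term.} Since $\nabla$ commutes with $e^{\Delta/R}$, we have
\[
\nabla I_R=e^{\Delta/R}\big(F_z(v)\nabla v+F_{\bar z}(v)\nabla\bar v\big)-\big(F_z(e^{\Delta/R}v)\nabla e^{\Delta/R}v+F_{\bar z}(e^{\Delta/R}v)\overline{\nabla e^{\Delta/R}v}\big).
\]
Add and subtract $F_z(v)\nabla v$ and its conjugate counterpart. This splits $\nabla I_R$ into three pieces.

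First, $(e^{\Delta/R}-1)G$ with $G=F_z(v)\nabla v+F_{\bar z}(v)\nabla\bar v$. Hölder gives $G\in L_t^2L_x^{\frac{2d}{d+2}}$, with bound $\|v\|_{L_t^\infty L_x^{\frac{2d}{d-2}}}^{\frac4{d-2}}\|\nabla v\|_{L_t^2L_x^{\frac{2d}{d-2}}}$. For a.e. $t$ we have $\|(e^{\Delta/R}-1)G(t)\|_{L^{\frac{2d}{d+2}}}\to0$, and this quantity is dominated by $2\|G(t)\|_{L^{\frac{2d}{d+2}}}$. Dominated convergence in $t$ therefore shows the first piece tends to $0$.

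Second, $F_z(e^{\Delta/R}v)\big(\nabla v-\nabla e^{\Delta/R}v\big)$. By Hölder this is bounded by
\[
\|v\|_{L^\infty_tL_x^{\frac{2d}{d-2}}}^{\frac4{d-2}}\,\|(1-e^{\Delta/R})\nabla v\|_{L_t^2L_x^{\frac{2d}{d-2}}},
\]
using that $e^{\Delta/R}$ is a contraction on $L^p$. The last factor tends to $0$ by the same dominated convergence argument.

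Third, $\big(F_z(v)-F_z(e^{\Delta/R}v)\big)\nabla v$. This is the main obstacle, because the map $z\mapsto F_z(z)$ is only Hölder continuous of order $\frac4{d-2}<1$ when $d>6$, so there is no Lipschitz bound to use. I avoid needing one. Note that $w_R:=e^{\Delta/R}v(t)\to v(t)$ in $L^{\frac{2d}{d-2}}$ for each $t$. Hence, along any subsequence, there is a further subsequence converging a.e. in $x$. In addition, $|F_z(v)-F_z(w_R)|\,|\nabla v|\lesssim(|v|^{\frac4{d-2}}+|w_R|^{\frac4{d-2}})|\nabla v|$. Since $|w_R|^{\frac{2d}{d-2}}$ converges in $L^1$, the family $|w_R|^{\frac4{d-2}}|\nabla v|$ is uniformly integrable in $L^{\frac{2d}{d+2}}$. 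Vitali's theorem, or the generalized dominated convergence theorem, then gives convergence to $0$ for a.e. $t$. We get this for the full family $R\to\infty$, because every subsequence has a further subsequence with limit $0$.

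To integrate in time, we need a dominating function. Using the pointwise bound above, Hölder, and the contraction property, the $L_x^{\frac{2d}{d+2}}$ norm of the third piece at time $t$ is at most $C\|v(t)\|_{L^{\frac{2d}{d-2}}}^{\frac4{d-2}}\|\nabla v(t)\|_{L^{\frac{2d}{d-2}}}$. This is in $L_t^2$, so dominated convergence in $t$ finishes this piece.

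\textbf{The undifferentiated term.} The same three-step argument applies with $\nabla$ removed:
\[
I_R=(e^{\Delta/R}-1)F(v)+\big(F(v)-F(e^{\Delta/R}v)\big).
\]
Since $F$ is $C^1$, we have $|F(v)-F(w_R)|\lesssim(|v|+|w_R|)^{\frac4{d-2}}|v-w_R|$. Measure $v$ in $L_t^2L_x^{\frac{2d}{d-2}}$ on the compact interval $I$; this follows by interpolating mass control with $\nabla v$, and alternatively the Sobolev $W^{1,\frac{2d}{d+2}}$ piece can be handled through the homogeneous part when $\psi\in\dot H^1$ only. Hölder plus dominated convergence then again gives convergence to $0$.

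Collecting the terms proves the lemma.
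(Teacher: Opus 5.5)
Your argument is correct, but it handles the two delicate pieces differently from the paper.

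For $\bigl(F_z(v)-F_z(e^{\Delta/R}v)\bigr)\nabla v$, the paper uses an explicit modulus of continuity of $F_z$. When $d\ge 6$ it uses the H\"older bound $|F_z(v)-F_z(w)|\lesssim|v-w|^{\frac{4}{d-2}}$. When $d=4,5$ it uses the weighted Lipschitz bound $|v-w|\bigl(|v|^{\frac{6-d}{d-2}}+|w|^{\frac{6-d}{d-2}}\bigr)$. It then applies H\"older in $L^{\frac{2(d+2)}{d-2}}_{t,x}$ and $L_t^{\frac{2(d+2)}{d-2}}L_x^{\frac{2d(d+2)}{d^2+4}}$, so the error is bounded quantitatively by norms of $(1-e^{\Delta/R})v$ and $(1-e^{\Delta/R})\nabla v$. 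In particular, the missing Lipschitz bound for $d>6$ is not a real obstacle, since the H\"older bound already closes the estimate. Your subsequence-plus-Vitali argument is purely qualitative. In exchange, it needs only continuity and growth of $F_z$, avoids the dimensional case split, and stays entirely in the energy/endpoint pair $L^\infty_tL^{\frac{2d}{d-2}}_x$, $L^2_tL^{\frac{2d}{d-2}}_x$.

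For the undifferentiated part, the paper gets an $O(R^{-1/2})$ rate for $(1-e^{\Delta/R})F(v)$ from the Bernstein-type bound $\|(1-e^{\Delta/R})f\|_{L^p}\lesssim R^{-1/2}\|\nabla f\|_{L^p}$. It controls $F(v)-F(e^{\Delta/R}v)$ uniformly in $t$ through compactness of $\{v(t)\}_{t\in I}$ in $\dot H^1$. Your pointwise-in-$t$ dominated convergence needs only $v\in L^\infty_t\dot H^1$ and $|I|<\infty$, not continuity in time.

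One sentence should be repaired. There is no mass control here, since $\psi\in\dot H^1$ only. Also, the $L^{\frac{2d}{d+2}}$ component of the inhomogeneous $W^{1,\frac{2d}{d+2}}$ norm cannot be bypassed through the homogeneous part. What you actually need is $\sup_{t}\|v(t)\|_{L^{\frac{2d}{d-2}}}<\infty$, and hence $v\in L^2_tL^{\frac{2d}{d-2}}_x(I\times\mathbb{R}^d)$. This follows directly from $\dot H^1\hookrightarrow L^{\frac{2d}{d-2}}$ and compactness of $I$, exactly as you already used in the first piece of the gradient term.
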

\begin{proof}
	
The proof revolves around confirming the estimates
	\begin{align}\label{eq1.3v5}
		\lim_{R \to \infty} \|e^{\Delta/R} |v|^{\frac{4}{d-2}} v - |e^{\Delta/R} v|^{\frac{4}{d-2}} e^{\Delta/R} v\|_{L^2_t L_x^{\frac{2d}{d+2}}} = 0, 
	\end{align}
and
	\begin{align}\label{eq1.3v6}
		\lim_{R \to \infty} \left\| \nabla \left( e^{\Delta/R} |v|^{\frac{4}{d-2}} v - |e^{\Delta/R} v|^{\frac{4}{d-2}} e^{\Delta/R} v \right) \right\|_{L^2_t L_x^{\frac{2d}{d+2}}} = 0. 
	\end{align}
It's easy to see
	\begin{align}\label{eq1.3v7}
		&\left\| \nabla \left( e^{\Delta/R} |v|^{\frac{4}{d-2}} v - |e^{\Delta/R} v|^{\frac{4}{d-2}} e^{\Delta/R} v \right) \right\|_{L^2_t L_x^{\frac{2d}{d+2}}} \nonumber\\
		\leq &\left\| \nabla (1 - e^{\Delta/R})|v|^{\frac{4}{d-2}} v  \right\|_{L^2_t L_x^{\frac{2d}{d+2}}} + \left\| \nabla \left( |v|^{\frac{4}{d-2}} v - |e^{\Delta/R}v|^{\frac{4}{d-2}} e^{\Delta/R}v \right) \right\|_{L^2_t L_x^{\frac{2d}{d+2}}}.
	\end{align}
By \eqref{eq1.3v3}, it follows that
	\begin{align*}
		\|\nabla v\|_{S^0(\mathbb{R})} \leq C \bigl( \|\psi\|_{\dot{H}^1} \bigr),
	\end{align*}
which implies
	\begin{align*}
		\left\| \nabla |v|^{\frac{4}{d-2}} v \right\|_{L^2_t L_x^{\frac{2d}{d+2}}} \lesssim \left\| v \right\|_{L^{\frac{2(d+2)}{d-2}}_{t,x}}^\frac{4}{d-2} \left\| \nabla v \right\|_{L^{\frac{2(d+2)}{d-2}}_{t} L^{\frac{2d(d+2)}{d^2+4}}_{x}} \leq C \bigl( \left\| v(0) \right\|_{\dot{H}^1} \bigr),
	\end{align*}
This together with the dominated convergence theorem yields
	\begin{align*}
		\lim_{R \to \infty} \left\| \nabla (1 - e^{\Delta/R})|v|^\frac{4}{d-2}v \right\|_{L^2_t L_x^{\frac{2d}{d+2}}} = 0,
	\end{align*}
	and so the first term on the right side of \eqref{eq1.3v7} turns to zero.
	For $d \ge 6$, H\"older's inequality together with the dominated convergence theorem implies that the second term on the right side of \eqref{eq1.3v7} satisfies
	\begin{align}\label{eq1.3v8}
	&	\left\| \nabla \left( |v|^{\frac{4}{d-2}} v - |e^{\Delta/R}v|^{\frac{4}{d-2}} e^{\Delta/R}v \right) \right\|_{L^2_t L_x^{\frac{2d}{d+2}}} \nonumber\\
		 \lesssim &\left\| (1 - e^{\Delta/R})v \right\|_{L_{t,x}^{\frac{2(d+2)}{d-2}}}^{\frac{4}{d-2}} \left\| \nabla v \right\|_{L_t^{\frac{2(d+2)}{d}} L_x^{\frac{2d(d+2)}{d^2+4}}} \nonumber\\
		 &+ \left\| v \right\|_{L_{t,x}^{\frac{2(d+2)}{d-2}}}^{\frac{4}{d-2}} \left\| (1 - e^{\Delta/R})\nabla v \right\|_{L_t^{\frac{2(d+2)}{d}} L_x^{\frac{2d(d+2)}{d^2+4}}} \to 0, \text{ as } R \to \infty. 
	\end{align}	
	For $d = 4$ or $d = 5$, the first term in the second line of \eqref{eq1.3v8} is replaced by
\begin{align*}
	\left\| (1 - e^{\Delta/R})v \right\| \left( |v|^{\frac{6-d}{d-2}} + |e^{\Delta/R}v|^{\frac{6-d}{d-2}} \right) |\nabla u| \Big\|_{L_t^2 L_x^{\frac{2d}{d+2}}} \lesssim \left\| (1 - e^{\Delta/R})v \right\|_{L_{t,x}^{\frac{2(d+2)}{d-2}}} \|v\|_{L_{t,x}^{\frac{2(d+2)}{d-2}}}^{\frac{6-d}{d-2}} \|\nabla v\|_{L_t^{\frac{2(d+2)}{d}} L_x^{\frac{2d(d+2)}{d^2+4}}}.
\end{align*}
Consequently, the dominated convergence theorem implies the convergence of this term to 0, thereby proving \eqref{eq1.3v6}. A similar argument gives \eqref{eq1.3v5}. Indeed, we have
\begin{align*}
		&\left\| e^{\Delta/R} |v|^{\frac{4}{d-2}} v - |e^{\Delta/R}v|^{\frac{4}{d-2}} e^{\Delta/R}v \right\|_{L_t^2 L_x^{\frac{2d}{d+2}}} \\
		\lesssim& \left\| (1 - e^{\Delta/R})|v|^{\frac{4}{d-2}}v \right\|_{L_t^2 L_x^{\frac{2d}{d+2}}} + \left\| |v|^{\frac{4}{d-2}} v - |e^{\Delta/R}v|^{\frac{4}{d-2}} e^{\Delta/R}v \right\|_{L_t^2 L_x^{\frac{2d}{d+2}}}.
\end{align*}
It follows from H\"older's inequality, Bernstein's inequality, and the chain rule that
$$\left\| (1 - e^{\Delta/R})|v|^{\frac{4}{d-2}} v  \right\|_{L_t^2 L_x^{\frac{2d}{d+2}}} \lesssim R^{-\frac{1}{2}} \|v\|_{L_{t,x}^{\frac{2(d+2)}{d-2}}}^{\frac{4}{d-2}} \| \nabla v \|_{L_t^{\frac{2(d+2)}{d}} L_x^{\frac{2d(d+2)}{d^2+4}}} = O\left(R^{-\frac{1}{2}}\right).$$	
	Then, applying Bernstein's inequality, H\"older's inequality, and Sobolev's inequality, we have
	\begin{align*}
		\left\| |v|^{\frac{4}{d-2}} v - |e^{\Delta/R}v|^{\frac{4}{d-2}} e^{\Delta/R}v \right\|_{L_t^2 L_x^{\frac{2d}{d+2}}} \lesssim& \left\| \left(|v|^{\frac{4}{d-2}} + |e^{\Delta/R}v|^{\frac{4}{d-2}} \right) (1 - e^{\Delta/R})v \right\|_{L_t^2 L_x^{\frac{2d}{d+2}}} \nonumber \\
		\lesssim&_{|I|} \| \nabla v \|_{L_t^\infty L_x^2}^{\frac{4}{d-2}} \left\| \nabla (1 - e^{\Delta/R})v \right\|_{L_t^\infty L_x^2}.
	\end{align*}	
	Given that $v \in C^0_t \dot{H}^1_x(I \times \mathbb{R}^d)$, we conclude that the set $\{v(t)\}_{t \in I}$ is compact in $\dot{H}^1(\mathbb{R}^d)$. By the Riesz characterization of compactness in $L^2$, it follows that the right-hand side tends to 0 as $R \to \infty$.
\end{proof}

		The main result of this section is the following lemma.
		
		\begin{lemma}\label{le.3v3}
			Suppose we are given $\phi \in \dot{H}^1(\mathbb{R}^d)$, $T_0 \in (0, \infty)$, and $\rho \in \{0, 1\}$, and define $f_N$ as in \eqref{eq1.3v1}.
			
			\begin{enumerate}
				\item[(i)] There exists $N_0 = N_0(\phi, T_0)$ sufficiently large such that for every $N \ge N_0$, it can be found a unique solution $U_N \in C([-T_0N^{-2}, T_0N^{-2}] : H^1(\mathbb{H}^d))$ of the initial value problem
				\begin{align}\label{le.3v31}
				\begin{cases}
					i\partial_t U_N + \Delta_g U_N = \rho |U_N|^{\frac{4}{d-2}}U_N, \\
					U_N(0) = f_N.
				\end{cases}
				\end{align}
				Moreover, for every $N \ge N_0$,
				\begin{align*}
					\|U_N\|_{S^1([-T_0N^{-2}, T_0N^{-2}])} \lesssim_ {E_{\mathbb{R}^d}(\phi)}1. 
				\end{align*}
				
				\item[(ii)] Let $\varepsilon_1 \in (0, 1)$ be sufficiently small, depending only on $E_{\mathbb{R}^d}(\phi)$. Choose $\phi' \in H^s(\mathbb{R}^d)$, $s \ge d+2$, such that
				\[
				\|\phi - \phi'\|_{\dot{H}^1(\mathbb{R}^d)} \le \varepsilon_1.
				\]
				Denote by $v' \in C(\mathbb{R} : H^s)$ the solution of the initial-value problem
				\begin{align*}
				\begin{cases}
					i\partial_t v' + \Delta v' = \rho |v'|^{\frac{4}{d-2}} v', \\
					v'(0) = \phi'.
				\end{cases}
				\end{align*}
				Then there exists $N, R_0 \ge 1$ depending on $T_0, \phi'$, and $\varepsilon_1$ such that for every $R \ge R_0$,
				\begin{align}\label{le.3v32}
					\limsup_{N \to \infty} \|U_N - V_{R,N}\|_{S^1([-T_0N^{-2}, T_0N^{-2}])} \lesssim_{E_{\mathbb{R}^d}(\phi)} \varepsilon_1,
				\end{align}
			\begin{align*}
					V_{R,N}(t,x) = N^{\frac{d-2}{2}} \eta \left( \frac{N \Psi_I^{-1}(h^{-1}x)}{R} \right) \left( e^{\Delta/R} v' \right) \left( N^2 t, N \Psi_I^{-1}(h^{-1}x) \right), \text{ for } -T_0N^{-2} \le t \le T_0N^{-2}.
			\end{align*}
			\end{enumerate}
		\end{lemma}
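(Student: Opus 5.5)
The plan follows the standard Ionescu--Pausader--Staffilani argument. Part (i) will be deduced from part (ii) together with the stability result, Proposition \ref{pr.2v6}. The idea is to build, from the Euclidean solution $v'$ supplied by Theorem \ref{th.3v1}, an approximate solution $V_{R,N}$ on $[-T_0N^{-2},T_0N^{-2}]$. We then check three things: it satisfies the bounds \eqref{pr.2v61} uniformly in $N$; its error is small in $N^1$; and its initial data are $H^1$-close to $f_N$. Proposition \ref{pr.2v6} then gives existence of $U_N$, the $S^1$ bound in (i), and the comparison \eqref{le.3v32}.

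\textbf{Step 1 (initial data).} Take $\phi'\in H^s$ with $\|\phi-\phi'\|_{\dot H^1}\le\varepsilon_1$. Theorem \ref{th.3v1} then gives $v'$ global, with $\dot H^1$ Strichartz bounds controlled by $E_{\mathbb{R}^d}(\phi)$ and uniform $H^{s'}$ bounds for moderate $s'$. Note that $\Psi_I$ has metric $g_{ij}=\delta_{ij}-\frac{v_iv_j}{1+|v|^2}$ and volume density $(1+|v|^2)^{-1/2}$. On the support $|v|\lesssim R/N$ these equal the Euclidean quantities up to $O(R^2/N^2)$. Changing variables $y=N\Psi_I^{-1}(h^{-1}x)$ therefore gives
\[
\|f_N - V_{R,N}(0)\|_{H^1(\mathbb{H}^d)}\lesssim \|Q_N\phi-\eta(\cdot/R)e^{\Delta/R}\phi'\|_{\dot H^1(\mathbb{R}^d)}+o_N(1).
\]
The $L^2$ part of the $H^1$ norm carries a factor $N^{-1}$ and is harmless. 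Letting $N\to\infty$ and then $R\to\infty$, the right side is $\lesssim\varepsilon_1$, since $Q_N\phi\to\phi$ in $\dot H^1$. The uniform bounds \eqref{pr.2v61} follow in the same way, from the Euclidean bounds and the near-isometry of the rescaled chart on balls of radius $R/N$.

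\textbf{Step 2 (error term).} This is the main step. Write $\tilde v_R=\eta(y/R)\,e^{\Delta/R}v'(t,y)$. Since $\Delta_g=N^2\bigl(\Delta_y+O(R^2N^{-2})\,\partial^2+O(N^{-1})\,\partial\bigr)$ in the rescaled coordinates on the support, we get
\[
e=N^{\frac{d+2}{2}}\Bigl[\eta(\cdot/R)\bigl(e^{\Delta/R}|v'|^{\frac{4}{d-2}}v'-|e^{\Delta/R}v'|^{\frac{4}{d-2}}e^{\Delta/R}v'\bigr)+\mathcal{C}_R+\mathcal{E}_{R,N}\Bigr](N^2t,y),
\]
plus the nonlinear discrepancy $\rho(\eta-\eta^{\frac{d+2}{d-2}})|e^{\Delta/R}v'|^{\frac{4}{d-2}}e^{\Delta/R}v'$. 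The three pieces are handled as follows.
\begin{enumerate}
\item[(a)] The first bracket tends to $0$ in $L^2_tW^{1,\frac{2d}{d+2}}$ by Lemma \ref{le.3v2}.
\item[(b)] The cutoff commutator $\mathcal{C}_R=2\nabla\eta(\cdot/R)R^{-1}\cdot\nabla e^{\Delta/R}v'+R^{-2}\Delta\eta(\cdot/R)\,e^{\Delta/R}v'$ is bounded on the compact interval $[-T_0,T_0]$ using $v'\in L^\infty_tH^{s'}$. Together with the discrepancy term, which lives on $R\le|y|\le 2R$, it tends to $0$ as $R\to\infty$, by the compactness of $\{v'(t)\}$ and decay in $L^{2d/(d-2)}$ outside large balls. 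We place these errors in $L^1_tH^1$ to exploit the compact time interval.
\item[(c)] The metric error satisfies $\|\mathcal{E}_{R,N}\|\lesssim_R N^{-1}$ in $L^1_tH^1$. This uses the higher regularity of $e^{\Delta/R}v'$ and is where $s\ge d+2$ is needed.
\end{enumerate}
Undoing the scaling, which preserves $N^1$ norms up to $1+o(1)$, gives $\limsup_N\|e\|_{N^1}\le o_R(1)$. Choosing $R_0$ large then makes this $\le\varepsilon_1$.

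\textbf{Step 3 (conclusion).} Apply Proposition \ref{pr.2v6} with $M=M(E_{\mathbb{R}^d}(\phi))$ and $\epsilon\sim\varepsilon_1$ on $I=[-T_0N^{-2},T_0N^{-2}]$. This yields $U_N$ satisfying both (i) and \eqref{le.3v32}. Uniqueness comes from Proposition \ref{pr.2v5}.

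The expected obstacle is the bookkeeping in Step 2: controlling derivatives of the nonlinearity and the metric error in $N^1$ for low $d$. For $d=4,5$ the nonlinearity is not smooth enough to differentiate freely. There I would work in the dual space $L^1_tH^1$ and use only one derivative plus the smoothing $e^{\Delta/R}$, rather than higher-order chain rules. Exactly which $H^{s'}$ bound of Theorem \ref{th.3v1} is invoked would be adjusted accordingly.
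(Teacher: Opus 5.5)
Your proposal is correct and follows essentially the same route as the paper, which itself follows Ionescu--Pausader--Staffilani. You build $V_{R,N}$ from $\eta(\cdot/R)e^{\Delta/R}v'$, split the error into the Lemma~\ref{le.3v2} term, the cutoff commutator plus the $\eta-\eta^{\frac{d+2}{d-2}}$ discrepancy near $|y|\sim R$, and the $\Delta_g-\Delta$ metric error, then close with Proposition~\ref{pr.2v6}. One small correction: the metric error in your step (c) is already controlled, with $R$-dependent constants, by the $e^{\Delta/R}$ smoothing alone, so it is not where $s\ge d+2$ is needed. That is just as well, since Theorem~\ref{th.3v1} only propagates $H^s$ regularity for $s\le 1+\frac{4}{d-2}$ when $d\ge 5$.
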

		
		\begin{proof}
			We now show that for any sufficiently large $R_0$, there exists an $N_0$ such that for all $N \ge N_0$, the function $V_{R_0,N}$ is an approximate solution to \eqref{le.3v31}. Then, by applying Proposition \ref{pr.2v6}, we can turn this approximate solution into an exact solution of the initial-value problem \eqref{le.3v31}, which completes the proof of the lemma.
			
			 For $k$ large enough, it follows from the Strichartz inequality and Theorem \ref{th.3v1} that $V_{R_0,N}$ has a finite spacetime bound and that $f_N$ is close to $V_{R_0,N}(0)$.
			Because of
			\begin{align}\label{3v14}
				E_{R,N}(t,x) = \left( (i\partial_t + \Delta_g)V_{R,N} - |V_{R,N}|^{\frac{4}{d-2}}V_{R,N} \right)(t,x),
			\end{align}
			$$
			v'_{R,N}(t,x) = N^{\frac{d-2}{2}} \eta \left( \frac{Nx}{R} \right) \left( e^{\Delta/R}v' \right) (N^2 t, N x), \text{ for } -N^{-2}T_0 \le t \le N^{-2}T_0,
			$$
			and
		$$
			v'_R(t,x) = \eta \left( \frac{x}{R} \right) \left( e^{\Delta/R}v' \right)(t,x).
		$$	
			Then
			\[
			V_{R,N}(t,x) = v'_{R,N}(t, \Psi_I^{-1}(h^{-1}x)), \text{ for } -N^{-2}T_0 \le t \le N^{-2}T_0.
			\]
			Moreover
			\begin{align}\label{eq1.3v15}
				E_{R,N}(t,x) = e_{R,N}(t, \Psi_I^{-1}(h^{-1}x)) + \Delta_g V_{R,N}(t,x) - (\Delta v'_{R,N})(t, \Psi_I^{-1}(h^{-1}x)),
			\end{align}
			where $e_{R,N}(t,x) := \left( (i\partial_t + \Delta)v'_{R,N} - \rho |v'_{R,N}|^{\frac{4}{d-2}} v'_{R,N} \right)(t,x).$
			For estimating the difference in the preceding formula, we introduce the standard vector fields $\partial_j$ on $\mathbb{R}^d$, $j=1,\dots,d$ and the induced vector fields $\tilde{\partial}_j := (\Psi_T)_*(\partial_j)$ on $\mathbb{H}^d$, $j=1,\dots,d$. Then, for each $k=1,2,3$, define
			\begin{align*}
					\left| \tilde{\nabla}^k h(x) \right| := \sum_{k_1+k_2+\dots+k_d=k} \left| \tilde{\partial}_1^{k_1} \tilde{\partial}_2^{k_2} \dots \tilde{\partial}_d^{k_d} h(x) \right|.
			\end{align*}
			Then the following estimate holds:
			\begin{align*}
					\left| \tilde{\nabla}^1 (\Delta_g f(x) - (\Delta (f \circ \Psi_I)) (\Psi_T^{-1}(x))) \right|  \lesssim \sum_{k=1}^3 \left| \Psi_I^{-1}(x) \right|^{k-1} \left| \tilde{\nabla}^k f(x) \right|,
				\end{align*}
				for every $C^3$ function $f: \mathbb{H}^d \to \mathbb{C} \text{ supported in the ball of radius } 1 \text{ centered at } \mathbf{0}$.  Hence,
				\begin{align}\label{eq1.3v16}
					&\left| \tilde{\nabla}^1(\Delta_g V_{R,N}(t,x) - (\Delta v'_{R,N})(t, \Psi_I^{-1}(h^{-1}x))) \right|  \nonumber\\
					\lesssim &\sum_{k=1}^3 \sum_{k_1+k_2+\dots+k_d=k} |\Psi_I^{-1}(h^{-1}x)|^{k-1}  \left| \left( \tilde{\partial}_1^{k_1} \tilde{\partial}_2^{k_2} \dots \tilde{\partial}_d^{k_d} u_k \right)(t, \Psi_I^{-1}(h^{-1}x)) \right| \nonumber\\
					\lesssim& R^3 N^{\frac{d-2}{2}} \sum_{k_1+k_2+\dots+k_d \in \{1,2,3\}}  \left| \left( \tilde{\partial}_1^{k_1} \tilde{\partial}_2^{k_2} \dots \tilde{\partial}_d^{k_d} \left( \eta \left( \frac{\cdot}{R} \right) u(t,\cdot) \right) \right) (N^2 t, N \Psi_I^{-1}(h^{-1}x)) \right|. 
			\end{align}
			We obtain $e_{R,N}(t,x) = N^{\frac{d+2}{2}} e_R(N^2 t, Nx)$, where
			\begin{align*}
				e_R(t,x) := &\left( (i\partial_t + \Delta)v'_R - \rho |v'_R|^{\frac{4}{d-2}} v'_R \right)(t,x) \\
				= &\rho \left( \eta\left(\frac{x}{R}\right) e^{\Delta/R}|v' |^{\frac{4}{d-2}} v' - \eta\left(\frac{x}{R}\right)^{\frac{d+2}{d-2}} |e^{\Delta/R}v'|^{\frac{4}{d-2}} e^{\Delta/R}v' \right) \\
				&+ \frac{1}{R^2} e^{\Delta/R}v'(t,x)(\Delta \eta) \left( \frac{x}{R} \right) + \frac{2}{R} \nabla e^{\Delta/R}v'(t,x) \cdot (\nabla \eta) \left( \frac{x}{R} \right)\\
				:=& e_R^1(t,x) + e_R^2(t,x) + e_R^3(t,x).
			\end{align*}
			Let
		\begin{align*}
				e_R^1(t,x) = &\rho \eta \left( \frac{x}{R} \right) \left( e^{\Delta/R}|v'|^{\frac{4}{d-2}} v' - |e^{\Delta/R}v'|^{\frac{4}{d-2}} e^{\Delta/R}v' \right) + \rho \eta \left( \frac{x}{R} \right) \left( 1 - \eta \left( \frac{x}{R} \right)^{\frac{4}{d-2}} \right) |e^{\Delta/R}v'|^{\frac{4}{d-2}} e^{\Delta/R}v'\\
				=&e_R^{1,1}(t,x)+e_R^{1,2}(t,x).
		\end{align*}
			It follows from the Leibniz rule and a change of variables that
		\begin{align*}
			\left\| \nabla \left( e_R^{1,1}(t,x) \right) \right\|_{L_t^2 L_x^{\frac{2d}{d+2}} ([-T_0,T_0] \times \mathbb{R}^d)} \le &\left\| \nabla \left( e^{\Delta/R}|v'|^{\frac{4}{d-2}} v' - |e^{\Delta/R}v'|^{\frac{4}{d-2}} e^{\Delta/R}v' \right) \right\|_{L_t^2 L_x^{\frac{2d}{d+2}}}\\
			 &+ \left\| \nabla \left( \eta \left( \frac{x}{R} \right) \right) \cdot \left( e^{\Delta/R}|v'|^{\frac{4}{d-2}} v' - |e^{\Delta/R}v'|^{\frac{4}{d-2}} e^{\Delta/R}v' \right) \right\|_{L_t^2 L_x^{\frac{2d}{d+2}}}.
		\end{align*}
			Lemma \ref{le.3v2} makes the first term vanish in the limit $R \to \infty$. The same lemma also covers the second term.
			\begin{align*}
					&\left\| \left( e^{\Delta/R}|v'|^{\frac{4}{d-2}} v' - |e^{\Delta/R}v'|^{\frac{4}{d-2}} e^{\Delta/R}v' \right) \right\|_{L_t^2 L_x^{\frac{2d}{d+2}}} \\
					\lesssim& \frac{1}{R} \left\| e^{\Delta/R}|v|^{\frac{4}{d-2}} v - |e^{\Delta/R}v|^{\frac{4}{d-2}} e^{\Delta/R}v \right\|_{L_t^2 L_x^{\frac{2d}{d+2}}} \to 0, \text{ as } R \to \infty.
			\end{align*}
			Thus
		$$
			\left\| \nabla \left( e_R^{1,1}(t,x) \right) \right\|_{L_t^2 L_x^{\frac{2d}{d+2}} ([-T_0,T_0] \times \mathbb{R}^d)} \to 0, \text{ as } R \to \infty.
		$$
			In order to estimate $e_R^{1,2}(t,x)$, we make use of the Leibniz rule, a change of variables, Hölder's inequality, Sobolev's inequality, and finally the dominated convergence theorem to deduce
			\begin{align*}
				&\left\| \nabla \left( e_R^{1,2}(t,x) \right) \right\|_{L_t^2 L_x^{\frac{2d}{d+2}} ([0,T] \times \mathbb{R}^d)} \\
				& \lesssim \left\| |e^{\Delta/R}v'|^{\frac{4}{d-2}} \nabla e^{\Delta/R}v' \right\|_{L_t^2 L_x^{\frac{2d}{d+2}} ([0,T] \times \{|x| \sim R\})} + \frac{1}{R} \left\| e^{\Delta/R}v' \right\|_{L_t^\infty L_x^{\frac{2d}{d-2}}}^{\frac{d+2}{d-2}} \\
				& \lesssim \left\| \nabla e^{\Delta/R}v' \right\|_{L_t^{\frac{2(d+2)}{d-2}} L_x^{\frac{2d(d+2)}{d^2+4}}} \left\| e^{\Delta/R}v' \right\|_{L_{t,x}^{\frac{2(d+2)}{d-2}}([0,T] \times \{|x| \sim R\})}^{\frac{4}{d-2}}  + O \left( \frac{1}{R} \right) \\
				& \le C(E(v'(0))) \left( \|v'\|_{L_{t,x}^{\frac{2(d+2)}{d-2}} ([0,T] \times \{|x| \sim R\})} + \left\| (1-e^{\Delta/R})v' \right\|_{L_{t,x}^{\frac{2(d+2)}{d-2}}} \right)^{\frac{4}{d-2}} + O \left( \frac{1}{R} \right) \\
				& = o(1) + O \left( \frac{1}{R} \right).
			\end{align*}
			For the term $e_R^2(t,x)$, we apply H\"older's inequality and Sobolev's inequality to obtain
			\begin{align*}
				\left\| \nabla \left( e_R^2(t,x) \right) \right\|_{L_t^1 L_x^2 ([-T_0,T_0] \times \mathbb{R}^d)} & \sim \frac{1}{R^2} \left\| \nabla \left( e^{\Delta/R}v'(t,x)(\Delta \eta) \left( \frac{x}{R} \right) \right) \right\|_{L_t^1 L_x^2 ([-T_0,T_0] \times \mathbb{R}^d)} \\
				& \lesssim \frac{T_0}{R^2} \|\nabla v'\|_{L_t^\infty L_x^2} + \frac{T_0}{R^2} \|v'\|_{L_t^\infty L_x^{2} }\\
				&\lesssim \frac{T_0}{R^2} \|\nabla v'\|_{L_t^\infty L_x^2} \to 0, \text{ as } R \to \infty.
			\end{align*}
			Finally, to estimate $e_R^3(t,x)$, we using Hölder's inequality and Bernstein's inequality to have
			\begin{align*}
				&\left\| \nabla \left( e_R^3(t,x) \right) \right\|_{L_t^1 L_x^2 ([-T_0,T_0] \times \mathbb{R}^d)}\\
				\lesssim& \frac{1}{R} \left\| \Delta e^{\Delta/R}v'(t,x)(\nabla \eta) \left( \frac{x}{R} \right) \right\|_{L_t^1 L_x^2 ([-T_0,T_0] \times \mathbb{R}^d)} + \frac{1}{R} \left\| \nabla e^{\Delta/R}v'(t,x) \frac{1}{R} (\nabla^2 \eta) \left( \frac{x}{R} \right) \right\|_{L_t^1 L_x^2 ([-T_0,T_0] \times \mathbb{R}^d)}\\
				\lesssim& \frac{T_0}{R^2} \sqrt{R} \|\nabla v'\|_{L_t^\infty L_x^2} + \frac{T_0}{R^2} \|\nabla v'\|_{L_t^\infty L_x^2} \to 0, \text{ as } R \to \infty.
			\end{align*}
			Hence,
			\begin{align}\label{eq1.3v17}
				\|\nabla e_R\|_{N^0([-T_0,T_0] \times \mathbb{R}^d)} \to 0, \text{ as } R \to \infty. 
			\end{align}
			Let $N^0([-T_0,T_0] \times \mathbb{R}^d)$ be the dual Strichartz space on $\mathbb{R}^d$, whose definition mirrors that of the dual Strichartz space on $\mathbb{H}^d$. From \eqref{eq1.3v17} we deduce the existence of some $R_0 \ge 1$ such that, for any $R \ge R_0$ and $N \ge 1$,
\begin{align}\label{eq1.3v18}
\limsup_{R \to \infty} \|\nabla e_{R,N}\|_{N^0([-N^{-2}T_0, N^{-2}T_0] \times \mathbb{R}^d)} \to 0. 
\end{align}
Then, from \eqref{eq1.3v15} and \eqref{eq1.3v16}, we have
\begin{align*}
\left| \tilde{\nabla}^1 E_{R,N}(t,x) \right| \lesssim& \left| (\nabla e_{R,N}) (t, \Psi_I^{-1}(h^{-1}x)) \right| + \left| \tilde{\nabla}^1(\Delta_g V_{R,N}(t,x) - (\Delta v'_{R,N})(t, \Psi_I^{-1}(h^{-1}x)) )\right| \\
\lesssim& \left| (\nabla e_{R,N}) (t, \Psi_I^{-1}(h^{-1}x)) \right| \\
&+ R^3 N^{\frac{d-2}{2}} \sum_{k_1+k_2+\dots+k_d \in \{1,2,3\}} \left| \left( \tilde{\partial}_1^{k_1} \tilde{\partial}_2^{k_2} \dots \tilde{\partial}_d^{k_d} \left( \eta \left( \frac{\cdot}{R} \right) u(t,\cdot) \right) \right) (N^2 t, N \Psi_I^{-1}(h^{-1}x)) \right|.
\end{align*}
Combining this with \eqref{eq1.3v18}, we obtain that for every sufficiently large $R_0$, there exists $N_0$ (depending on $R_0$) such that for all $N \geq N_0$,
$$\lim_{T_0\to \infty}\|\nabla_g E_{R_0,N}\|_{N^1([-N^{-2}T_0, N^{-2}T_0])} \to 0.$$
From Proposition \ref{pr.2v6}, we can deduce the existence of a global solution $U_N$ to $\text{NLS}_{\mathbb{H}^d}$ with initial data $U_N(0) = \pi_{h}T_N\phi$ and (i).
\end{proof}
	
As a consequence, we have the following corollary using the approach outlined in  \cite[Corollary 4.3]{Ionescu2012}.
			
\begin{corollary}\label{corollary}
 Assume $\psi \in \dot{H}^1(\mathbb{R}^d)$, $\varepsilon > 0$, and $I \subseteq \mathbb{R}$ is an interval such that
\begin{align*}
\|\nabla (e^{it\Delta}\psi)\|_{L^p_t L^q_x(I\times\mathbb{R}^d )} \le \varepsilon,
\end{align*}
with $\frac{2}{p} + \frac{d}{q} = \frac{d}{2}$ and $q \in (2, \frac{2d}{d-2}]$. For $N \ge 1$ define as before
\begin{align*}
(Q_N\psi)(x) = \eta \left( \frac{x}{N^{\frac{1}{2}}} \right) \cdot (e^{\Delta/N}\psi)(x), \quad \psi_N(x) = N^{\frac{d-2}{2}}(Q_N\psi)(Nx), \quad \tilde{\psi}_N(y) = \psi_N(\Psi_I^{-1}(y)).
\end{align*}
Then there exists $N_1 = N_1(\psi, \varepsilon)$ such that for every $N \ge N_1$,
\begin{align*}
\|\nabla_g (e^{it\Delta_g}\tilde{\psi}_N)\|_{L^p_t L^q_x(N^{-2}I\times\mathbb{H}^d)} \lesssim_q \varepsilon.
\end{align*}
\end{corollary}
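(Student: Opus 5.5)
The plan is to apply Lemma \ref{le.3v3} with $\rho=0$, that is, to the linear flow. By time translation, and since $I$ may be unbounded, I first reduce to a bounded interval. Fix $\delta>0$ small. Because $\nabla e^{it\Delta}\psi$ lies in the global Euclidean Strichartz space, I can choose $T_0$ so large that the contribution of $|t|>T_0$ to the Euclidean norm is below $\delta$. I then split $N^{-2}I$ into $N^{-2}(I\cap[-T_0,T_0])$ and the remaining long-time part.

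On the compact part I use Lemma \ref{le.3v3}(ii) with $\rho=0$. I pick $\psi'\in H^s$ with $\|\psi-\psi'\|_{\dot H^1}\le\varepsilon_1$, where $\varepsilon_1\le\varepsilon$. The linear solution $U_N=e^{it\Delta_g}\tilde\psi_N$ is then within $\varepsilon_1$ in $S^1([-T_0N^{-2},T_0N^{-2}])$ of $V_{R,N}$, which is the rescaled, cut-off and mollified copy of $e^{it\Delta}\psi'$ transplanted by $\Psi_I$. Since $(p,q)$ is admissible, $S^1$ controls $\nabla_g$ in $L^p_tL^q_x$ by interpolation between the endpoints. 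So it suffices to bound $\|\nabla_g V_{R,N}\|_{L^p_tL^q_x(N^{-2}I)}$.

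For $V_{R,N}$, the function is supported where $|\Psi_I^{-1}(x)|\lesssim R/N$. On this region the metric and the measure $(1+|v|^2)^{-1/2}dv$ differ from the Euclidean ones by a factor $1+O(R^2N^{-2})$. Changing variables and using the scaling invariance of the admissible norm therefore gives
\[
\|\nabla_g V_{R,N}\|_{L^p_tL^q_x(N^{-2}I)}\le (1+o_N(1))\,\bigl\|\nabla\bigl(\eta(\cdot/R)e^{\Delta/R}e^{it\Delta}\psi'\bigr)\bigr\|_{L^p_tL^q_x(I\cap[-T_0,T_0])}.
\]
The right side is at most $\|\nabla e^{it\Delta}\psi\|_{L^p_tL^q_x(I)}$ plus errors of size $\varepsilon_1$ (from $\psi-\psi'$, by Strichartz), $o_R(1)$ (from $e^{\Delta/R}-1$ and from the derivative falling on the cutoff, controlled by Hölder on bounded time). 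This yields $\lesssim\varepsilon$ after choosing $R$ and then $N$ large.

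The main obstacle is the long-time part $|t|>T_0N^{-2}$, where no Euclidean comparison is available. I would handle it by Euclidean scattering decay plus the hyperbolic Strichartz estimate started at time $\pm T_0N^{-2}$:
\[
\|\nabla_g e^{it\Delta_g}\tilde\psi_N\|_{L^p_tL^q_x(|t|>T_0N^{-2})}\lesssim \|\nabla_g e^{iT_0N^{-2}\Delta_g}\tilde\psi_N\|\ \text{in a weaker dual sense}.
\]
Concretely, I would follow \cite[Corollary 4.3]{Ionescu2012} and use the dispersive estimate \eqref{dispersive-e}. Write $e^{it\Delta_g}\tilde\psi_N$ for $t>T_0N^{-2}$ as $e^{i(t-T_0N^{-2})\Delta_g}$ applied to $U_N(T_0N^{-2})$. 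This datum is $\varepsilon_1$-close to $V_{R,N}(T_0N^{-2})$, whose Euclidean counterpart $e^{iT_0\Delta}\psi'$ has small $L^{q'}$-type Lebesgue norm for large $T_0$ relative to its $\dot H^1$ size, because $\psi'$ is smooth and decaying. Interpolating the resulting dispersive decay $|t|^{-d(1/q'-1/2)}$ against the energy bound, the tail $L^p_t$ integral is $\lesssim T_0^{-c}$, uniformly in $N$. Choosing $T_0$ large makes it $\le\varepsilon$, and combining with the compact part gives the corollary with implicit constant depending only on $q$.
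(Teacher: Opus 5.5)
Your short-time step is sound. It is the same mechanism as the argument the paper defers to (Ionescu--Pausader--Staffilani, Corollary 4.3): Lemma \ref{le.3v3}(ii) with $\rho=0$ on $[-T_0N^{-2},T_0N^{-2}]$, transplanting $\eta(\cdot/R)e^{\Delta/R}e^{it\Delta}\psi'$, then comparing with the Euclidean hypothesis. Your preliminary choice of $T_0$ from the Euclidean tail is unnecessary. The hypothesis already bounds the Euclidean norm on $I\cap[-T_0,T_0]$, and $T_0$ has to be fixed by the hyperbolic tail instead. The real gap is the long-time step, which fails as written.

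\emph{The smallness you invoke is false.} For $q'<2$ the free flow \emph{increases} $L^{q'}$ norms. For generic data (e.g. a Gaussian), $|e^{iT_0\Delta}\psi'|\approx T_0^{-d/2}$ on a ball of radius $\approx T_0$. Hence $\|\nabla e^{iT_0\Delta}\psi'\|_{L^{q'}}\approx T_0^{d(1/q'-1/2)}=T_0^{2/p}$, which grows with $T_0$.

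\emph{Restarting at $t_0=T_0N^{-2}$ does not help.} Applying \eqref{dispersive-e} from $t_0$ gives the weight $|t-t_0|^{-2/p}$, whose $p$-th power is not integrable at $t_0$. Even granting the Euclidean transplant at time $t_0$ and discarding $[t_0,2t_0]$, the rescaled bound is $(T_0^{2}\cdot T_0^{-1})^{1/p}=T_0^{1/p}$, which also grows.

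\emph{Lemma \ref{le.3v3} cannot supply the input.} It only gives $L^2$-based ($S^1$) closeness of $U_N(t_0)$ to $V_{R,N}(t_0)$. That says nothing about $\|\nabla_g(\cdot)\|_{L^{q'}}$ with $q'<2$, which is the quantity \eqref{dispersive-e} consumes.

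\emph{The fix.} Apply the dispersive estimate from $t=0$ to a nice datum; the smallness then comes from integrating the dispersive weight in time, not from any small datum at $T_0N^{-2}$.
Take $\psi'\in C_0^\infty(\mathbb{R}^d)$ (not merely $H^s$, so that $\nabla\psi'\in L^{q'}$) with $\|\psi-\psi'\|_{\dot H^1}$ small. The difference costs $\lesssim\|\psi-\psi'\|_{\dot H^1}$ by the hyperbolic Strichartz estimate and $\|T_N\phi\|_{H^1}\lesssim\|\phi\|_{\dot H^1}$.
Since $\tilde\psi'_N$ lives where $|\Psi_I^{-1}(x)|\lesssim N^{-1/2}$, and $\|(-\Delta_g)^{1/2}f\|_{L^r}\approx\|\nabla_g f\|_{L^r}$, scaling gives $\|(-\Delta_g)^{1/2}\tilde\psi'_N\|_{L^{q'}}\lesssim_{\psi'}N^{-2/p}$. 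Hence
\begin{align*}
\|\nabla_g e^{it\Delta_g}\tilde\psi'_N\|_{L^q_x}\lesssim |t|^{-\frac{2}{p}}\|(-\Delta_g)^{\frac12}\tilde\psi'_N\|_{L^{q'}}\lesssim_{\psi'}(N|t|)^{-\frac{2}{p}},
\end{align*}
and therefore, uniformly in $N$,
\begin{align*}
\|\nabla_g e^{it\Delta_g}\tilde\psi'_N\|_{L^p_tL^q_x(\{|t|\ge T_0N^{-2}\})}\lesssim_{\psi'}\Bigl(\int_{|t|\ge T_0N^{-2}}N^{-2}|t|^{-2}\,dt\Bigr)^{\frac1p}\lesssim T_0^{-\frac1p}.
\end{align*}
Choose $\psi'$ first, then $T_0$, then $R$ and $N_1$ via Lemma \ref{le.3v3}(ii), taking $\varepsilon_1\lesssim\varepsilon/C(\|\psi\|_{\dot H^1})$ since that lemma's constant depends on the energy. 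With this replacement for your third step, the proof goes through.
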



			
\section{Profile decomposition on hyperbolic spaces}\label{sec:Prodechs}
			
The definitions of the hyperbolic and Euclidean frames, along with their associated profiles, are provided at the beginning of this section. We then prove basic properties of the profiles related to equivalent and orthogonal frames. After establishing the inverse Strichartz estimate and performing a finitary decomposition, we ultimately construct the linear profile decomposition on hyperbolic space.
			
For a triple \((f, t_0, h_0) \in L^2(\mathbb{H}^d) \times \mathbb{R} \times \mathbb{G}\), we set
\begin{align}\label{eq1.4v1}
\Pi_{t_0, h_0} f(x) = (e^{-it_0 \Delta_g} f)(h_0^{-1} x) = (\pi_{h_0} e^{-it_0 \Delta_g} f)(x). 
\end{align}
As in Section \ref{sec:Eucapp}, see \eqref{eq1.3v1}, for $\phi \in \dot{H}^1(\mathbb{R}^d)$ and $N \geq 1$, we introduce
\begin{align}\label{eq1.4v2}
T_N \phi(x) := N^{\frac{d-2}{2}} \tilde{\phi}(N \Psi_I^{-1}(x)), \quad \text{where } \tilde{\phi}(y) := \eta\left(\frac{y}{N^{\frac{1}{2}}}\right) \cdot (e^{\Delta/N} \phi)(y), 
\end{align}
and we observe that $T_N : \dot{H}^1(\mathbb{R}^d) \to H^1(\mathbb{H}^d)$ is a bounded linear operator satisfying
			\begin{align*}
				\|T_N \phi\|_{H^1(\mathbb{H}^d)} \lesssim \|\phi\|_{\dot{H}^1(\mathbb{R}^d)}.
			\end{align*}
			
			\begin{definition}\label{def.4v1}
				\begin{enumerate}
					\item[(1)] A \emph{frame} is a sequence $\mathcal{O}_k = (N_k, t_k, h_k) \in [1, \infty) \times \mathbb{R} \times \mathbb{G}$ for $k = 1, 2, \ldots$, where $N_k \geq 1$ is a scale, $t_k \in \mathbb{R}$ represents time, and $h_k \in \mathbb{G}$ denotes a translation. We require either that $N_k = 1$ for every $k$ (then we call $\{\mathcal{O}_k\}_{k \geq 1}$ a \emph{hyperbolic frame}) or that $N_k \nearrow \infty$ (then we call it a \emph{Euclidean frame}). Let $\mathcal{F}_e$ denote the collection of Euclidean frames
					\begin{align*}
						\mathcal{F}_e = \{\mathcal{O} = \{(N_k, t_k, h_k)\}_{k \geq 1} : N_k \in [1, \infty),\; t_k \in \mathbb{R},\; h_k \in \mathbb{G},\; N_k \nearrow \infty\},
					\end{align*}
					and let $\mathcal{F}_h$ denote the collection of hyperbolic frames
					\begin{align*}
						\mathcal{F}_h = \{\tilde{\mathcal{O}} = \{(1, t_k, h_k)\}_{k \geq 1} : t_k \in \mathbb{R},\; h_k \in \mathbb{G}\}.
					\end{align*}
					
					\item[(2)] Two frames $\{(N_k, t_k, h_k)\}_{k \geq 1}$ and $\{(N'_k, t'_k, h'_k)\}_{k \geq 1}$ are said to be \emph{orthogonal} if
					\begin{align}\label{def.4v11}
						\lim_{k \to \infty} \left[ \left|\ln\left(\frac{N_k}{N'_k}\right) \right| + N_k^2 |t_k - t'_k| + N_k d(h_k \cdot 0, h'_k \cdot 0) \right] = +\infty. 
					\end{align}
					If they are not orthogonal, they are called \emph{equivalent}.
					
					\item[(3)] Given $\phi \in \dot{H}^1(\mathbb{R}^d)$ and a Euclidean frame $\mathcal{O} = \{\mathcal{O}_k\}_{k \geq 1} = \{(N_k, t_k, h_k)\}_{k \geq 1} \in \mathcal{F}_e$, we define the \emph{Euclidean profile} associated with $(\phi, \mathcal{O})$ as the sequence $\tilde{\phi}_{\mathcal{O}_k}$ given by
					\begin{align}\label{def.4v12}
						\tilde{\phi}_{\mathcal{O}_k} := \Pi_{t_k, h_k}(T_{N_k} \phi), 
					\end{align}
					where the operators $\Pi$ and $T$ are as in \eqref{eq1.4v1} and \eqref{eq1.4v2}.
					
					\item[(4)] Given $\psi \in H^1(\mathbb{H}^d)$ and a hyperbolic frame $\tilde{\mathcal{O}} = \{\tilde{\mathcal{O}}_k\}_{k \geq 1} = \{(1, t_k, h_k)\}_{k \geq 1} \in \mathcal{F}_h$, we define the \emph{hyperbolic profile} associated with $(\psi, \tilde{\mathcal{O}})$ as the sequence $\tilde{\psi}_{\tilde{\mathcal{O}}_k}$ given by
					\begin{align*}
						\tilde{\psi}_{\tilde{\mathcal{O}}_k} := \Pi_{t_k, h_k} \psi.
					\end{align*}
				\end{enumerate}
			\end{definition}
			\begin{definition}\label{def.4v2}
 A bounded sequence $\{f_{k}\}_{k}$ in $H^{1}\left(\mathbb{H}^{d}\right)$ is said to be absent from a frame $\mathcal{O}=\left\{\left(N_{k}, t_{k}, h_{k}\right)\right\}_{k}$ if its restriction to $\mathcal{O}$ converges weakly to zero. More precisely, for every profile $\tilde{\phi}_{\mathcal{O}_{k}}$ associated with $\mathcal{O}$,
				\begin{align}\label{def.4v21}
					\lim _{k \rightarrow \infty}\left\langle f_{k}, \tilde{\phi}_{\mathcal{O}_{k}}\right\rangle_{H^{1} \times H^{1}\left(\mathbb{H}^{d}\right)}=0. 
				\end{align}
			\end{definition}
	\begin{remark}\label{remark}
		\begin{enumerate}
			\item[(i)] When $\mathcal{O}=(1, t_k, h_k)_k$ is a hyperbolic frame, condition \eqref{def.4v21} is equivalent to
			\begin{align*}
				\Pi_{-t_k, h_k^{-1}} f_k \rightharpoonup 0 \quad \text{in } H^1(\mathbb{H}^d) \text{ as } k \to \infty.
			\end{align*}
			\item[(ii)] For a Euclidean frame $\mathcal{O}=(N_k, t_k, h_k)_k$, \eqref{def.4v21} is equivalent to the following: for every $R>0$,
			\begin{align*}
				g_k^R(v) = \eta\left(\frac{v}{R}\right) N_k^{-\frac{d-2}{2}} \bigl(\Pi_{-t_k, h_k^{-1}} f_k\bigr)\left(\Psi_I\left(\frac{v}{N_k}\right)\right) \to 0 \quad \text{in } \dot{H}^1(\mathbb{R}^d) \text{ as } k \to \infty.
			\end{align*}
		\end{enumerate}
	\end{remark}
	
			We first establish some basic facts about profiles coming from equivalent or orthogonal frames.
			\begin{lemma}\label{le.4v4}
				\begin{enumerate}
					\item[(i)] Suppose $\left\{\mathcal{O}_k\right\}_{k \geq 1} = \left\{\left(N_k, t_k, h_k\right)\right\}_{k \geq 1}$ and $\left\{\mathcal{O}_k'\right\}_{k \geq 1} = \left\{\left(N_k', t_k', h_k'\right)\right\}_{k \geq 1}$ are two equivalent Euclidean frames (or both hyperbolic frames), and let $\phi \in \dot{H}^1(\mathbb{R}^d)$ (or $\phi \in H^1(\mathbb{H}^d)$ respectively). Then there exists an element $\phi' \in \dot{H}^1(\mathbb{R}^d)$ (or $\phi' \in H^1(\mathbb{H}^d)$) such that, after passing to a subsequence,
					\begin{equation}\label{le.4v41}
						\lim_{k \to \infty} \bigl\| \tilde{\phi}_{\mathcal{O}_k} - \tilde{\phi}'_{\mathcal{O}_k'} \bigr\|_{H^1(\mathbb{H}^d)} = 0 ,
					\end{equation}
					where $\tilde{\phi}_{\mathcal{O}_k}, \tilde{\phi}_{\mathcal{O}_k'}$ are the profiles defined in Definition \ref{def.4v1}.
					
					\item[(ii)] Let $\left\{\mathcal{O}_k\right\}_{k \geq 1} = \left\{\left(N_k, t_k, h_k\right)\right\}_{k \geq 1}$ and $\left\{{\mathcal{O}}_k'\right\}_{k \geq 1} = \left\{\left(N_k', t_k', h_k'\right)\right\}_{k \geq 1}$ be two orthogonal frames (either Euclidean or hyperbolic), and let $\tilde{\phi}_{\mathcal{O}_k}, \tilde{\phi}_{\mathcal{O}_k'}$ be the corresponding profiles. Then
					\begin{equation}\label{le2.4v42}
						\lim_{k \to \infty} \left| \int_{\mathbb{H}^d} D^{\alpha} \tilde{\phi}_{\mathcal{O}_k} \overline{D_{\alpha} \tilde{\psi}_{\mathcal{O}_k'}} \, d\mu \right| + \lim_{k \to \infty} \bigl\| \tilde{\phi}_{\mathcal{O}_k} \tilde{\psi}_{\mathcal{O}_k'} \bigr\|_{L^{\frac{d}{d-2}}(\mathbb{H}^d)} = 0.
					\end{equation}
					
					\item[(iii)] If $\tilde{\phi}_{\mathcal{O}_k}$ and $\tilde{\psi}_{\mathcal{O}_k}$ are two Euclidean profiles associated with the same frame, then
					\begin{align*}
						\lim_{k \to \infty} \bigl\langle \nabla_g \tilde{\phi}_{\mathcal{O}_k}, \nabla_g \tilde{\psi}_{\mathcal{O}_k} \bigr\rangle_{L^2 \times L^2(\mathbb{H}^d)}
						&= \lim_{k \to \infty} \int_{\mathbb{H}^d} D^{\alpha} \tilde{\phi}_{\mathcal{O}_k} \overline{D_{\alpha} \tilde{\psi}_{\mathcal{O}_k}} \, d\mu \\
						&= \int_{\mathbb{R}^d} \nabla \phi(x) \cdot \nabla \psi(x) \, dx = \bigl\langle \nabla \phi, \nabla \psi \bigr\rangle_{L^2 \times L^2(\mathbb{R}^d)}.
					\end{align*}
				\end{enumerate}
			\end{lemma}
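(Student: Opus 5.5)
The plan is to follow the three-dimensional argument of \cite[Lemma 5.4]{Ionescu2012}, checking at each step that only the scaling $N^{\frac{d-2}{2}}$ and the exponents $\frac{2d}{d-2}$, $\frac{d}{d-2}$ change. The basic tool is the change of variables $v=N_k\Psi_I^{-1}(h_k^{-1}x)$. On the support of $T_{N_k}\phi$ this map has Jacobian $1+O(N_k^{-1})$, because the support lies in $|v|\lesssim N_k^{1/2}$ and hence $|\Psi_I^{-1}|\lesssim N_k^{-1/2}$. In these coordinates the metric $g$ satisfies $g_{ij}=\delta_{ij}+O(|\Psi_I^{-1}|^2)$.

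\emph{Part (iii).} I would do this first, since it is a direct computation. Write $\tilde\phi_{\mathcal{O}_k}=\pi_{h_k}e^{-it_k\Delta_g}T_{N_k}\phi$. Both $e^{-it_k\Delta_g}$ and $\pi_{h_k}$ commute with $\Delta_g$ and are unitary on $L^2$, so
\[
\langle\nabla_g\tilde\phi_{\mathcal{O}_k},\nabla_g\tilde\psi_{\mathcal{O}_k}\rangle=\langle\nabla_g T_{N_k}\phi,\nabla_g T_{N_k}\psi\rangle .
\]
Pulling back by $\Psi_I$ and rescaling, the metric and Jacobian errors are $O(N_k^{-1})$ on the support. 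What remains is $\langle\nabla Q_{N_k}\phi,\nabla Q_{N_k}\psi\rangle_{L^2(\mathbb{R}^d)}$. This converges to $\langle\nabla\phi,\nabla\psi\rangle$ because $Q_N\phi\to\phi$ in $\dot H^1$: indeed $e^{\Delta/N}\phi\to\phi$, and the cutoff error $N^{-1/2}\|\nabla\eta\|\,\|e^{\Delta/N}\phi\|_{L^{2d/(d-2)}(|x|\sim N^{1/2})}$ tends to $0$ by H\"older and Sobolev.

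\emph{Part (i).} Equivalence means that along a subsequence $N_k'/N_k\to\lambda\in(0,\infty)$ (or both scales equal $1$), $N_k^2(t_k-t_k')\to s$, and $N_k d(h_k\cdot\mathbf{0},h_k'\cdot\mathbf{0})$ stays bounded. Using the Cartan decomposition, I would then write $h_k^{-1}h_k'=k_k a_{r_k}k_k'$ with $N_kr_k\to r_0$ and $k_k\to k_\infty$, $k_k'\to k_\infty'$ in $\mathbb{K}$.

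\begin{itemize}
\item \emph{Hyperbolic case.} Since $r_k\to r_0$, we have $h_k^{-1}h_k'\to g_\infty$ and $t_k'-t_k\to s$, and strong continuity of $\pi_h$ and $e^{it\Delta_g}$ gives the claim with $\phi'=\pi_{g_\infty}^{-1}e^{-is\Delta_g}\phi$.
\item \emph{Euclidean case.} I would show that $\tilde\phi_{\mathcal{O}_k}$ is, up to $o(1)$ in $H^1$, the Euclidean profile on $\mathcal{O}'$ of
\[
\phi'(x)=\lambda^{-\frac{d-2}{2}}\bigl(e^{is\Delta}\phi\bigr)\bigl(R(x/\lambda)+x_0\bigr),
\]
where $R\in SO(d)$ and $x_0$ come from the limits of $k_k,k_k'$ and $r_0$.
\end{itemize}

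The Euclidean case of (i) is where I expect the main difficulty, namely commuting $e^{-it_k\Delta_g}$ with $T_{N_k}$ on the time scale $N_k^{-2}$. The plan is to invoke Lemma \ref{le.3v3} with $\rho=0$, so that linear hyperbolic evolution over times $O(N_k^{-2})$ of the data $T_{N_k}\phi$ is approximated in $S^1$ by the rescaled Euclidean evolution. By unitarity, it suffices to prove this for Schwartz $\phi$ with compactly supported Fourier transform. Translations by $a_{r_k}$ with $N_kr_k\to r_0$ act in rescaled coordinates as Euclidean translations by $x_0$ plus $o(1)$ errors in $C^1$ on compact sets.

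\emph{Part (ii).} By density I may assume $\phi,\psi$ are nice and reduce to three cases.
\begin{itemize}
\item \emph{Scales diverge}, $N_k/N_k'\to\infty$. Split into frequencies and use that the gradient inner product of a concentrated profile against a profile of much coarser scale vanishes. Concretely, I would integrate by parts onto $-\Delta_g$ and use $L^{2d/(d+2)}$--$L^{2d/(d-2)}$ H\"older on a ball of radius $R/N_k$ where $\tilde\psi$ has small $L^{2d/(d-2)}$ mass.
\item \emph{Comparable scales but $N_k^2|t_k-t_k'|\to\infty$.} Move both evolutions to one side and use the dispersive estimate \eqref{dispersive-e}: the linear flow over rescaled time tending to infinity sends compactly supported data to $0$ in $L^{2d/(d-2)}$ while keeping $H^1$ bounded, which forces the pairing to $0$.
\item \emph{Times comparable but the spatial centers diverge at scale $N_k$.} Supports become essentially disjoint, and the tail of $e^{is\Delta}\phi$ is small.
\end{itemize}
The $L^{\frac{d}{d-2}}$ term of the product is handled by the same case split. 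The supports of $|\tilde\phi_{\mathcal{O}_k}|^{2d/(d-2)}$ and $|\tilde\psi_{\mathcal{O}_k'}|^{2d/(d-2)}$ asymptotically separate, in scale or in space, after the time evolutions are approximated via Corollary \ref{corollary} or Lemma \ref{le.2v3} by functions concentrated on balls. Hyperbolic versus Euclidean pairs fall under the diverging-scale case, and for two hyperbolic frames with $d(h_k\cdot\mathbf{0},h_k'\cdot\mathbf{0})\to\infty$ the pairing vanishes by weak convergence $\pi_{h_k^{-1}h_k'}\to0$.
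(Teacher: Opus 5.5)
Your proposal is correct and follows the same route as the paper, whose entire proof is a deferral to the three-dimensional argument of \cite[Lemma 5.4]{Ionescu2012}; your sketch (Euclidean approximation with $\rho=0$ for equivalent frames, and the scale/time/space case split with the dispersive estimate for orthogonal frames) in fact supplies more detail than the paper gives. One point to make explicit: the $L^{\frac{d}{d-2}}$ product is not invariant under moving the time evolutions onto one profile, so you should first dispose of the case $N_k^2|t_k|\to\infty$ (or $(N_k')^2|t_k'|\to\infty$) by sending that profile to $0$ in $L^{\frac{2d}{d-2}}$ via \eqref{dispersive-e}. This is what covers the time-orthogonal case, where the two profiles do not separate in scale or in space.
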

	
	\begin{proof}
		 This proof is similar to Lemma 5.4 in \cite{Ionescu2012}, and we can generalize its three‑dimensional version to higher dimensions.
	\end{proof}
	
	We next give the main result.
	
	\begin{proposition}[Linear profile decomposition]\label{pr.4v5}
		Suppose that $\{f_k\}_{k\ge1}$ is a bounded sequence in $H^1(\mathbb{H}^d)$. Then, after passing to a subsequence, there exist sequences of pairs $(\phi^\mu,\mathcal{O}^\mu)\in\dot{H}^1(\mathbb{R}^d)\times\mathcal{F}_e$ and $(\psi^\nu,\tilde{\mathcal{O}}^\nu)\in H^1(\mathbb{H}^d)\times\mathcal{F}_h$ ($\mu,\nu=1,2,\dots$) such that for every $J\ge1$ the following decomposition holds:
		$$
		f_k = \sum_{\mu=1}^J \tilde{\phi}_{\mathcal{O}_k^\mu}^{\mu} + \sum_{\nu=1}^J \tilde{\psi}_{\mathcal{\tilde{O}}_k^\nu}^{\nu} + r_k^J,
	$$
		where $\tilde{\phi}_{\mathcal{O}_k^\mu}^{\mu}$ and $\tilde{\psi}_{\tilde{\mathcal{O}}_k^\nu}$ are the corresponding profiles as in Definition \ref{def.4v1}. Moreover, one has
		\begin{align}\label{pr.4v51}
			\lim_{J\to\infty}\limsup_{k\to\infty}\sup_{\substack{N\ge1\\ t\in\mathbb{R},\ x\in\mathbb{H}^d}} N^{-\frac{d-2}{2}}|P_N e^{it\Delta_g} r_k^J|(x)=0.
		\end{align}
		In addition, the families $\{\mathcal{O}^\mu\}_{\mu\ge1}$ and $\{\tilde{\mathcal{O}}^\nu\}_{\nu\ge1}$ are pairwise orthogonal. Finally, the decomposition is asymptotically orthogonal in the sense that
		\begin{align}\label{pr.4v52}
			\lim_{J\to\infty}\limsup_{k\to\infty}\left( E(f_k) - \sum_{\mu=1}^J E(\tilde{\phi}_{\mathcal{O}_k^\mu}^{\mu}) - \sum_{\nu=1}^J E(\tilde{\psi}_{\tilde{\mathcal{O}}_k^\nu}^{\nu}) - E(r_k^J) \right)=0,
		\end{align}
		where $E$ is defined in \eqref{eq1.1v2}.
	\end{proposition}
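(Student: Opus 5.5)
The plan is to follow the Ionescu–Pausader–Staffilani scheme from the three-dimensional case. The core step is an \emph{inverse Strichartz} lemma, obtained from the heat kernel asymptotics. We then iterate it and control the errors with the orthogonality relations of Lemma \ref{le.4v4}.

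\textbf{Step 1 (inverse Strichartz).} Let $\{f_k\}$ be bounded in $H^1(\mathbb{H}^d)$ with $\limsup_k \|f_k\|_{H^1}\le A$, and suppose
\[
\limsup_{k\to\infty}\sup_{N\ge1,\,t,\,x}N^{-\frac{d-2}{2}}|P_Ne^{it\Delta_g}f_k(x)|\ge\delta>0 .
\]
Pick near-maximizers $(N_k,t_k,x_k)$ and write $x_k=h_k\cdot\mathbf 0$. After passing to a subsequence there are two cases.

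\emph{Case A: $N_k$ bounded.} We may take $N_k\to N_\infty\in[1,\infty)$ and set $g_k=\Pi_{-t_k,h_k^{-1}}f_k$. Then $g_k\rightharpoonup\psi$ weakly in $H^1$. Because $P_{N_\infty}$ has a kernel in $L^{\frac{2d}{d+2}}$ by \eqref{Fourier}, the weak limit satisfies $|\langle\psi,P_{N_\infty}(d(\cdot,\mathbf 0))\rangle|\gtrsim\delta$. This forces $\|\psi\|_{H^1}\gtrsim_A\delta^{C}$, and we have found a hyperbolic profile.

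\emph{Case B: $N_k\to\infty$.} Rescale via $\Psi_I$ and set
\[
g_k(v)=N_k^{-\frac{d-2}{2}}g_k'\bigl(\Psi_I(v/N_k)\bigr),\qquad g_k'=\Pi_{-t_k,h_k^{-1}}f_k .
\]
These functions are bounded in $\dot H^1_{loc}$ uniformly on balls (Remark \ref{remark}(ii)), so $\eta(\cdot/R)g_k\rightharpoonup\phi$ for a diagonal subsequence. To see that $\phi\neq0$, we need the kernel of $N^{-\frac{d-2}{2}}P_N$, pulled back by $v\mapsto\Psi_I(v/N)$, to converge to the Euclidean kernel of $\Delta e^{\Delta}$ as $N\to\infty$, with tails that are uniform in $N$. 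This is exactly where Theorem \ref{th.2v1} enters. Writing $P_N=-N^{-2}\partial_s p_s|_{s=N^{-2}}$, we must show
\[
N^{-d}p_{N^{-2}}(r/N)\to(4\pi)^{-d/2}e^{-r^2/4}
\]
together with its $\partial_s$ derivative, separately for $d=2m+1$ and $d=2m+2$. In the odd case we expand $(\sinh r^{-1}\partial_r)^m$ at small $r$. In the even case we rescale $s\mapsto s/N$ in the integral $\int_r^\infty s e^{-s^2/4t}(\cosh s-\cosh r)^{-1/2}ds$ and use $\cosh(s/N)-\cosh(r/N)\approx(s^2-r^2)/(2N^2)$. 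Dominated convergence, with tails controlled by \eqref{Fourier} and Lemma \ref{le.2v3}(1), then gives $|\langle\nabla\phi,\nabla(\text{Euclidean kernel})\rangle|\gtrsim\delta$, hence $\|\phi\|_{\dot H^1}\gtrsim\delta^{C}$.

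\textbf{Step 2 (decoupling).} In both cases we show that the extracted profile $\tilde\phi_{\mathcal O_k}$ (or $\tilde\psi_{\tilde{\mathcal O}_k}$) satisfies
\[
\|f_k\|_{H^1}^2=\|\tilde\phi_{\mathcal O_k}\|^2+\|f_k-\tilde\phi_{\mathcal O_k}\|^2+o(1),
\]
and that $f_k-\tilde\phi_{\mathcal O_k}$ is absent from $\mathcal O$. The Euclidean case uses Lemma \ref{le.4v4}(iii) and the fact that $T_N$ approximately inverts the rescaling. We then iterate: at each step we extract a profile of size $\gtrsim\delta_J^{C}$, where $\delta_J$ is the current $\limsup$. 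Summability of $\sum\|\text{profiles}\|^2\le A^2$ forces $\delta_J\to0$, which is \eqref{pr.4v51}. We alternate the Euclidean and hyperbolic extractions and index them by $\mu$ and $\nu$, padding with zero profiles.

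\textbf{Step 3 (orthogonality and energy).} Suppose a newly extracted frame were equivalent to an earlier one. By Lemma \ref{le.4v4}(i) its profile could be rewritten in the earlier frame, contradicting absence. Hence all frames are pairwise orthogonal. Lemma \ref{le.4v4}(ii) then gives decoupling of the $\dot H^1$ parts, and the $L^2$ part decouples similarly. For the potential energy we need
\[
\Bigl\|\sum\text{profiles}+r_k^J\Bigr\|_{L^{\frac{2d}{d-2}}}^{\frac{2d}{d-2}}=\sum\|\cdot\|^{\frac{2d}{d-2}}+\|r_k^J\|^{\frac{2d}{d-2}}+o(1).
\]
Cross terms among profiles vanish by the $L^{\frac{d}{d-2}}$ part of \eqref{le2.4v42}. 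For the terms mixing profiles and the remainder, Lemma \ref{le.2v3}(2) together with \eqref{pr.4v51} at $t=0$ makes $\|r_k^J\|_{L^{2d/(d-2)}}$ small as $J\to\infty$. Expanding with the elementary inequality $\bigl||a+b|^p-|a|^p-|b|^p\bigr|\lesssim|a|^{p-1}|b|+|a||b|^{p-1}$ then yields \eqref{pr.4v52}.

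\textbf{Main obstacle.} The hardest step is Case B of Step 1: the uniform Euclidean approximation of the rescaled hyperbolic heat kernel and of its time derivative in every dimension $d\ge4$. The even-dimensional formula, with its singular integral and the $m$-fold operator $(\sinh r^{-1}\partial_r)^m$, needs careful control of commutators between rescaling and differentiation. I would first try induction on $m$, using the identity that $(\sinh r)^{-1}\partial_r$ maps the heat kernel on $\mathbb{H}^{n}$ to the one on $\mathbb{H}^{n+2}$, up to the factor $e^{-nt}$.
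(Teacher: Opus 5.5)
Your proposal is correct and follows essentially the paper's route, the Ionescu--Pausader--Staffilani scheme: an inverse-Strichartz extraction split by whether $N_k$ stays bounded (a hyperbolic profile obtained as a weak limit of $\Pi_{-t_k,h_k^{-1}}f_k$) or $N_k\to\infty$ (a Euclidean profile obtained by rescaling through $\Psi_I$ and comparing the heat kernels of Theorem~\ref{th.2v1} with the Gaussian), followed by iteration with energy decrement, orthogonality via Lemma~\ref{le.4v4}, and Lemma~\ref{le.2v3}(2) for the potential energy. The only variation is in the heat-kernel step: you need just locally uniform convergence of the rescaled kernel of $N^{-\frac{d-2}{2}}P_N$, with tails handled by Lemma~\ref{le.2v3}(1), rather than the paper's stronger $H^1$ closeness \eqref{eq1.4v35}, and your dimension-raising recursion $p^{(n+2)}_t=-\frac{e^{-nt}}{2\pi\sinh r}\,\partial_r p^{(n)}_t$ organizes that asymptotic analysis more cleanly than the paper's direct computation.
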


For a bounded sequence $\{g_k\}_k$ in $H^1(\mathbb{H}^d)$, define
\begin{align}\label{eq1.4v24}
\delta ( ( g _ { k } ) _ { k } ) = \operatorname* { l i m } _ { k \to \infty } \sup_{\substack{N\ge1\\ t\in\mathbb{R},\ h\in\mathbb{H}^d}}  N^{-\frac{d-2}{2}}| P _ { N } ( e ^{i t \Delta _ { g } } g _ { k } ) ( h \cdot 0 ) | .
	\end{align}
	When $\delta((f_k)_{k}) \leq \delta$, we simply set $J = 0$ and $f_k = r_k$, Lemma \ref{le.4v6} holds trivially. Otherwise we proceed inductively using the following Lemma \ref{le.4v7}. If the Lemma \ref{le.4v7} is proved, Lemma \ref{le.4v6} follows by iteration. Indeed, define $(f_k^0)_k = (f_k)_k$. When $\delta((f_k^\alpha)_k) \geq \delta$, apply the claim to $(f_k^\alpha)_k$ to obtain
	\begin{align*}
		f _ { k } ^{\alpha + 1} = f _ { k } ^{\alpha} - \tilde { \phi } _ { \mathcal { O } _ { k } ^{\alpha + 1} } .
	\end{align*}
	By construction, the sequence $(f_k^\alpha)_k$ is absent from all frames $\mathcal{O}^\beta$ with $\beta \leq \alpha$. The process stops after $O(\delta^{-2})$ steps because each profile contributes at least $\delta$ to the energy: from \eqref{def.4v21} and the orthogonality,
	\begin{align*}
		\| \nabla _ { g } f _ { k } ^{\alpha - 1} \| _ { L ^{2} } ^{2} = \| \nabla _ { g } f _ { k } ^{\alpha} \| _ { L ^{2} } ^{2} + \| \nabla _ { g } \tilde { \phi } _ { \mathcal { O } _ { k } ^{\alpha} } \| _ { L ^{2} } ^{2} + 2 \left\langle f _ { k } ^{\alpha} , \tilde { \phi } _ { \mathcal { O } _ { k } ^{\alpha} } \right\rangle _ { H ^{1} \times H ^{1} ( \mathbb { H } ^{d} ) } = \| \nabla _ { g } f _ { k } ^{\alpha} \| _ { L ^{2} } ^{2} + \| \nabla _ { g } \tilde { \phi } _ { \mathcal { O } _ { k } ^{\alpha} } \| _ { L ^{2} } ^{2} + o _ { k } ( 1 ) ,
	\end{align*}
	hence by induction
	\begin{align*}
		\| \nabla _ { g } f _ { k } \| _ { L ^{2} } ^{2} = \sum _ { 1 \leq \alpha \leq A } \| \nabla _ { g } \tilde { \phi } _ { \mathcal { O } ^{\alpha} } \| _ { L ^{2} } ^{2} + \| \nabla _ { g } f _ { k } ^{A} \| _ { L ^{2} } ^{2} + o _ { k } ( 1 ) .
	\end{align*}
	Since each $\|\nabla_{g}\tilde{\phi}_{\mathcal{O}^{\alpha}}\|_{L^{2}}^{2}\geq\delta^{2}$ asymptotically, the number of steps is bounded by $\delta^{-2}\|\nabla_{g}f_{k}\|_{L^{2}}^{2}$.

\begin{lemma}\label{le.4v7}
Let $\{g_k\}_k$ be a bounded sequence in $H^1(\mathbb{H}^d)$ that is absent from a family of frames $\{\mathcal{O}^\alpha\}_{\alpha\leq A}$, such that $\delta((g_k)_k)\geq\delta$. Then, after passing to a subsequence, there exists a new frame $\mathcal{O}'$, orthogonal to each $\mathcal{O}^\alpha$ ($\alpha\leq A$) and a profile $\tilde
		{\phi}_{\mathcal{O}'_k}$ with free energy
		\begin{align}\label{le.4v71}
			\operatorname* { l i m } _ { k \to \infty } \| \nabla _ { g } \tilde { \phi } _ { \mathcal { O } _ { k } ^{\prime} } \| _ { L ^{2} } \geq \delta
		\end{align}
		such that $g_k - \tilde{\phi}_{\mathcal{O}_k'}$ is absent from $\mathcal{O}'$ and from all $\mathcal{O}^\alpha(\alpha \leq A)$.
	\end{lemma}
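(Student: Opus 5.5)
The plan follows Ionescu--Pausader--Staffilani (Lemma 5.6 of \cite{Ionescu2012}), adapted to dimension $d$. Since $\delta((g_k)_k)\ge\delta$, after passing to a subsequence we pick $N_k\ge1$, $t_k\in\mathbb{R}$, $h_k\in\mathbb{G}$ with
\begin{align*}
N_k^{-\frac{d-2}{2}}\bigl|P_{N_k}(e^{it_k\Delta_g}g_k)(h_k\cdot\mathbf{0})\bigr|\ge\delta/2 .
\end{align*}
Passing to a further subsequence, either $N_k\to N_\infty<\infty$ or $N_k\nearrow\infty$. In the first case we take the hyperbolic frame $\mathcal{O}'=(1,-t_k,h_k)$; in the second, the Euclidean frame $\mathcal{O}'=(N_k,-t_k,h_k)$. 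The signs are chosen so that $\Pi_{-t_k',h_k'^{-1}}$ moves the concentration to the origin at time $0$.

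\textbf{Hyperbolic case.} Set $G_k:=\Pi_{t_k,h_k^{-1}}g_k=\pi_{h_k^{-1}}e^{it_k\Delta_g}g_k$, which is bounded in $H^1(\mathbb{H}^d)$. Extract a weak limit $G_k\rightharpoonup\psi$ and define the profile $\tilde\psi_{\mathcal{O}'_k}=\Pi_{-t_k,h_k}\psi$. Since $P_{N_k}$ has a kernel in $L^2$, bounded via \eqref{Fourier} and converging as $N_k\to N_\infty\ge1$, we get $|P_{N_\infty}\psi(\mathbf{0})|\ge\delta/2$. Duality and the $L^2$ bound on the kernel of $P_{N_\infty}(-\Delta_g)^{-1/2}$, which is uniform since $\rho>0$, then give $\|\nabla_g\psi\|_{L^2}\gtrsim\delta$. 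Absence of $g_k-\tilde\psi_{\mathcal{O}'_k}$ from $\mathcal{O}'$ is immediate because $G_k-\psi\rightharpoonup0$, as in Remark \ref{remark}(i).

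\textbf{Euclidean case.} Rescale: let $\phi_k(v)=\eta(v/R)\,N_k^{-\frac{d-2}{2}}G_k(\Psi_I(v/N_k))$. These are uniformly bounded in $\dot H^1(\mathbb{R}^d)$ for each $R$, so a diagonal argument over $R\to\infty$ gives a weak limit $\phi\in\dot H^1(\mathbb{R}^d)$. The key input is a heat-kernel approximation: $N^{-d}P_N(r/N)$ converges, with the uniform tail bounds of \eqref{Fourier}, to the Euclidean kernel of $\Delta e^{\Delta}$ as $N\to\infty$. This is where Theorem \ref{th.2v1} is used, separately for odd $d=2m+1$ (apply $(\sinh r^{-1}\partial_r)^m$ to the Gaussian with $t=N^{-2}$) and even $d=2m+2$ (also handle the Abel-type integral). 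Granting this, the lower bound passes to the limit as $|(\Delta e^{\Delta}\phi)(0)|\ge\delta/2$, hence $\|\phi\|_{\dot H^1}\gtrsim\delta$. Lemma \ref{le.4v4}(iii) then yields \eqref{le.4v71}. The same weak convergence, via Remark \ref{remark}(ii), shows that $g_k-\tilde\phi_{\mathcal{O}'_k}$ is absent from $\mathcal{O}'$. Here we use that $T_{N_k}\phi$ pulled back and cut off at scale $R$ converges to $\eta(\cdot/R)\phi$ in $\dot H^1$.

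\textbf{Orthogonality.} Suppose $\mathcal{O}'$ were equivalent to some $\mathcal{O}^\alpha$, $\alpha\le A$. Lemma \ref{le.4v4}(i) gives $\phi'$ with $\tilde\phi_{\mathcal{O}'_k}\approx\tilde\phi'_{\mathcal{O}^\alpha_k}$ in $H^1$. Absence of $g_k$ from $\mathcal{O}^\alpha$ then forces $\langle g_k,\tilde\phi_{\mathcal{O}'_k}\rangle_{H^1}\to0$. But by construction this pairing tends to $\|\phi\|^2_{\dot H^1}$ (respectively $\|\psi\|_{H^1}^2$), which is nonzero. So $\mathcal{O}'$ is orthogonal to every $\mathcal{O}^\alpha$. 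Lemma \ref{le.4v4}(ii) then shows that subtracting $\tilde\phi_{\mathcal{O}'_k}$ preserves absence from each $\mathcal{O}^\alpha$, since profiles on orthogonal frames pair to zero asymptotically.

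The main obstacle is the uniform heat-kernel asymptotics in the Euclidean case. We need $N^{-d}P_N(r/N)\to(4\pi)^{-d/2}\Delta e^{-|x|^2/4}$, together with a dominating function that is integrable against $\dot H^1$ data through the Sobolev embedding. We would prove this first for $d$ odd, where the derivative formula is explicit, and then for $d$ even, by expanding $(\cosh s-\cosh r)^{-1/2}$ near $s=r$ at small scales.
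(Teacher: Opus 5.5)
Your outline is correct and follows the paper's overall structure: the frame is chosen at the concentration point, $\psi$ is a weak limit in the hyperbolic case, and in the Euclidean case you use the cut-off rescalings $\eta(v/R)N_k^{-\frac{d-2}{2}}G_k(\Psi_I(v/N_k))$ with a diagonal limit $\phi$, absence via Remark \ref{remark}, and Lemma \ref{le.4v4} for orthogonality. The Euclidean lower bound and the orthogonality are obtained differently, however.

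In the Euclidean case the paper first proves the $H^1$ approximation \eqref{eq1.4v28}. That is, $N_k^{-\frac{d+2}{2}}e^{N_k^{-2}\Delta_g}\delta_{\mathbf{0}}$ is $H^1$-close to the recentred Euclidean profile built on $f=e^{\Delta}\delta_0$; this is the heat-kernel computation \eqref{eq1.4v35}. Since $N_k^{-\frac{d-2}{2}}P_{N_k}\delta_{\mathbf{0}}=\Delta_g\bigl(N_k^{-\frac{d+2}{2}}e^{N_k^{-2}\Delta_g}\delta_{\mathbf{0}}\bigr)$, the concentration becomes $\limsup_k|\langle\nabla_g g_k,\nabla_g\tilde f_{\mathcal{O}'_k}\rangle|\ge\delta/2$. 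This single Hilbert-space fact does two jobs. It gives $\mathcal{O}'\perp\mathcal{O}^\alpha$ via Lemma \ref{le.4v4}(i) before $\phi$ is built. Later, once $g_k-\tilde\phi_{\mathcal{O}'_k}$ is absent from $\mathcal{O}'$, it gives $\delta/2\le\limsup_k|\langle\nabla_g\tilde\phi_{\mathcal{O}'_k},\nabla_g\tilde f_{\mathcal{O}'_k}\rangle|\lesssim\lim_k\|\nabla_g\tilde\phi_{\mathcal{O}'_k}\|_{L^2}$.

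You instead pass to the limit in $\int\tilde G_kK_k\,dv$, where $\tilde G_k(v)=N_k^{-\frac{d-2}{2}}G_k(\Psi_I(v/N_k))$ and $K_k(v)=N_k^{-d}P_{N_k}(\sinh^{-1}(|v|/N_k))(1+|v|^2/N_k^2)^{-1/2}$. This reads off $|(\Delta e^{\Delta}\phi)(0)|\ge\delta/2$ in one step. You prove orthogonality by pairing $g_k$ with $\tilde\phi_{\mathcal{O}'_k}$ itself, whose limit is $\|\nabla\phi\|_{L^2}^2\neq0$; this avoids the heat kernel entirely and is fine. Both routes rest on the same asymptotics from Theorem \ref{th.2v1}. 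The paper needs them as $H^1(\mathbb{H}^d)$-convergence of the rescaled heat kernel, after which everything is weak $H^1$ convergence. You need them as convergence of $K_k$ in $L^{\frac{2d}{d+2}}$ (with the Jacobian weight), which pointwise convergence plus a uniform majorant delivers.

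That majorant cannot come from \eqref{Fourier}, contrary to what you suggest. After rescaling, \eqref{Fourier} only gives $|K_k(v)|\lesssim(1+|v|)^{-\frac{d+2}{d-2}}$ on $|v|\le N_k$. For $d\ge4$, $\int_{R\le|v|\le N_k}(1+|v|)^{-\frac{2d}{d-2}}\,dv$ is unbounded in $k$. So \eqref{Fourier} gives no uniform tail control in the exponent dual to the Sobolev bound on $\tilde G_k$. For $d\ge6$ it does not even place the kernel of $P_N$ in $L^2(\mathbb{H}^d)$, so in the hyperbolic case use Plancherel for the $L^2$ bound and for continuity in $N$.

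What you need are genuine Gaussian bounds, such as $|\Delta_g p_t(r)|\lesssim t^{-\frac{d}{2}-1}(1+r)^{C}e^{-cr^2/t}$ for $0<t\le1$. You can derive these from Theorem \ref{th.2v1}, as your last paragraph proposes, or take them from the known heat-kernel estimates on $\mathbb{H}^d$. They also handle the region $|v|\gtrsim N_k$, where the Jacobian prevents $\tilde G_k$ from being uniformly bounded in $L^{\frac{2d}{d-2}}(\mathbb{R}^d)$.

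Finally, fix a sign. By \eqref{eq1.4v1}, $\Pi_{t_k,h_k^{-1}}=\pi_{h_k^{-1}}e^{-it_k\Delta_g}$, so the recentred function $\pi_{h_k^{-1}}e^{it_k\Delta_g}g_k$ equals $\Pi_{-t_k,h_k^{-1}}g_k$. The frame should therefore be $(N_k,t_k,h_k)$, as in the paper, and the hyperbolic profile should be $\Pi_{t_k,h_k}\psi$.
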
 
\begin{proof}
	According to the hypothesis, we can find a sequence $\tilde{\mathcal{O}}_k = (N_k, t_k, h_k)_k$ such that the $\limsup_{k \to \infty}$ in \eqref{eq1.4v24} exceeds $\frac{\delta}{2}$. If $\limsup N_k = \infty$, after extracting a subsequence we may take $\{\tilde{\mathcal{O}}_k\}_k = \tilde{\mathcal{O}}$ to be a Euclidean frame. Otherwise, $N_k \to N \geq 1$ and we may take $\mathcal{O}' = \{(1, t_k, h_k)_k\}_{k \geq 1}$ to be a hyperbolic frame. In all case, we obtain a frame $\mathcal{O}' = \{(M_k, t_k, h_k)_k\}_{k \geq 1}$ with $M_k$ comparable to $N_k$ satisfying
	\begin{align}\label{eq1.4v26}
		\frac { \delta } { 2 } \leq \operatorname* { l i m } _ { k \to \infty } N _ { k } ^{- \frac { d - 2} { 2 } } \, | P _ { N _ { k } } ( e ^{i t _ { k} \Delta _ { g } } g _ { k } ) ( h _ { k } \cdot 0 ) | = \operatorname* { l i m } _ { k \to \infty } | \langle \Pi _ { - t _ { k } , h _ { k } ^{- 1} } g _ { k } , \ N _ { k } ^{- \frac { d - 2} { 2 } } \, P _ { N _ { k } } \delta _ { 0 } \rangle _ { L ^{2} \times L ^{2} ( \mathbb { H } ^{d} ) } | .
	\end{align}
	Next, we assert that there exists a profile $\tilde{f}_{\mathcal{O}'_{k}}$ that is associated with the frame $\mathcal{O}'$ satisfying
	\begin{align}\label{eq1.4v27}
		\operatorname* { l i m s u p } _ { k \to \infty } \left\| \nabla _ { g } \tilde { f } _ { \mathcal { O } _ { k } ^{\prime} } \right\| _ { L ^{2} } \lesssim 1,
	\end{align}
	moreover
\begin{align}\label{eq1.4v28}
\Pi _ { - t _ { k } , h _ { k } ^{- 1} } \tilde { f } _ { \mathcal { O } _ { k } ^{\prime} } - N _ { k } ^{- \frac { d + 2} { 2 } } e ^{N _ { k} ^{- 2} \Delta _ { g } } \delta _ { \mathbf { 0 } } \rightarrow 0 \mathrm { ~ i n ~ } H ^{1} (\mathbb { H } ^{d}) , \mathrm { ~ a s ~ } k \rightarrow \infty .
	\end{align}
	In fact, when $\mathcal{O}'$ is a hyperbolic frame, we choose
	\begin{align}\label{eq1.4v29}
		f ( x ) = N ^{- \frac { d + 2} { 2 } } e ^{N ^ { - 2} \Delta _ { g } } \delta _ { \mathbf { 0 } } ( x ) ,
	\end{align}
	when $\mathcal{O}'$ is a Euclidean frame, we choose
	\begin{align}\label{eq1.4v30}
		f ( x ) = ( 4 \pi ) ^{- \frac { d} { 2 } } e ^{- \frac { | x | ^ { 2} } { 4 } } = e ^{\Delta} \delta _ { 0 } ( x ) .
	\end{align}
	If $\mathcal{O}'$ is a hyperbolic frame, because $	| c ( \lambda ) | ^{- 2} \lesssim | \lambda | ^{2} ( 1 + | \lambda | ) ^{d - 3}$, it follows from the Plancherel identity that we obtain
	\begin{align}\label{eq1.4v31}
		\left\| \nabla _ { g } \tilde { f }_{\mathcal { O } _ { k }} \right\| _ { L ^{2} \left( \mathbb { H } ^{d} \right) } =& N ^{- \frac { d + 2} { 2 } } \left\| \nabla _ { g } e ^{N ^ { - 2} \Delta _ { g } } \delta _ { \mathbf { 0 } } ( x ) \right\| _ { L ^{2} \left( \mathbb { H } ^{d} \right) } \nonumber\\
\sim& N ^{- \frac { d + 2} { 2 } } \left( \int _ { \mathbb { R } } \left( \lambda ^{2} + \left( \frac { d - 1 } { 2 } \right) ^{2} \right) e ^{- 2 N ^ { - 2} \left( \lambda ^{2} + \left( \frac { d - 1 } { 2 } \right) ^{2} \right) } | c ( \lambda ) | ^{- 2} \, \mathrm { d } \lambda \right) ^{\frac { 1} { 2 } } \nonumber\\
	 \lesssim& N ^{- \frac { d + 2} { 2 } } \left( \int _ { \mathbb { R } } \left( \lambda ^{2} + \left( \frac { d - 1 } { 2 } \right) ^{2} \right) e ^{- 2 N ^ { - 2} \left( \lambda ^{2} + \left( \frac { d - 1 } { 2 } \right) ^{2} \right) } | \lambda | ^{2} ( 1 + | \lambda | ) ^{d - 3} \mathrm { d } \lambda \right) ^{\frac { 1} { 2 } } \nonumber\\
		\lesssim& N ^{- \frac { d + 2} { 2 } } \left( \int _ { 0 } ^{\infty} \left( \lambda ^{2} + 1 \right) ^{\frac { d - 1} { 2 } } \lambda ^{2} e ^{- 2 N ^ { - 2} \left( \lambda ^{2} + 1 \right) } \mathrm { d } \lambda \right) ^{\frac { 1} { 2 } } \nonumber\\
		\lesssim& \frac { 1 } { N ^{\frac { d - 1} { 2 } } e ^{N^ {- 2}} } \left( \int _ { 0 } ^{N ^ { - 1} } \lambda ^{2} e ^{- 2 \lambda ^ { 2} } \mathrm { d } \lambda \right) ^{\frac { 1} { 2 } } + \frac { 1 } { N ^{\frac { d - 1} { 2 } } e ^{N ^{- 2}} } \left( \int _ { N ^{- 1} } ^{\infty} N ^{d - 1} \lambda ^{d - 1} \lambda ^{2} e ^{- 2 \lambda ^ { 2} } \mathrm { d } \lambda \right) ^{\frac { 1} { 2 } }\nonumber\\
		 \lesssim& 1,
	\end{align}
	so that
	\begin{align*}
	 \limsup _{k \rightarrow \infty} \left\| \nabla _ { g } \tilde { f } _ { \mathcal { O } _ { k } ^{\prime} } \right\| _ { L ^{2} ( \mathbb { H } ^{d} ) } \lesssim 1 ,
	\end{align*}
	and
	\begin{align*}
		& \left\| \Pi _ { - t _ { k } , h _ { k } ^{- 1} } \tilde { f } _ { \mathcal { O } _ { k } ^{\prime} } - N _ { k } ^{- \frac { d + 2} { 2 } }e^{N_k^{-2}\Delta_g} \delta _ { \mathbf { 0 } } \right\| _ { H ^{1} \left( \mathbb { H } ^{d} \right) } \\
		= & \left\| N ^{- \frac { d + 2} { 2 } } e ^{N ^ { - 2} \Delta _ { g } } \delta _ { \mathbf { 0 } } ( x ) - N _ { k } ^{- \frac { d + 2} { 2 } }e^{N_k^{-2}\Delta_g}\delta _ { \mathbf { 0 } } ( x ) \right\| _ { H ^{1} \left( \mathbb { H } ^{d} \right) } \\
		\sim & \left( \int _ { \mathbb { R } } \left( \lambda ^{2} + \left( \frac { d - 1 } { 2 } \right) ^{2} \right) \left( N ^{- \frac { d + 2} { 2 } } e ^{- N ^ { - 2} \left( \lambda ^{2} + \left( \frac { d - 1 } { 2 } \right) ^{2} \right) } - N _ { k } ^{- \frac { d + 2} { 2 } } e ^{- N _ { k} ^{- 2} \left( \lambda ^{2} + \left( \frac { d - 1 } { 2 } \right) ^{2} \right) } \right) ^{2} | c ( \lambda ) | ^{- 2} \, \mathrm { d } \lambda \right) ^{\frac { 1} { 2 } }.
	\end{align*}
	Thus, applying the dominated convergence theorem yields
	\begin{align*}
		\left\| \Pi _ { - t _ { k } , h _ { k } ^{- 1} } \tilde { f } _ { \mathcal { O } _ { k } ^{\prime} } - N _ { k } ^{- \frac { d + 2} { 2 } } e ^{N _ { k} ^{- 2} \Delta _ { g } } \delta _ { \mathbf { 0 } } \right\| _ { H ^{1} \left( \mathbb { H } ^{d} \right) } \to 0 , \ \mathrm { a s } \ k \to \infty .
	\end{align*}
When $\mathcal{O}'$ is a Euclidean frame, we observe that
	\begin{align*}
		\left\| \nabla _ { g } \tilde { f } _ { \mathcal { O } _ { k } } \right\| _ { L ^{2} } \lesssim \left\| \nabla f \right\| _ { L ^{2} ( \mathbb { R } ^{d} ) } \sim \left\| \nabla e ^{\Delta \delta _ { 0} } \right\| _ { L ^{2} ( \mathbb { R } ^{d} ) } \sim \left( \int _ { \mathbb { R } ^{d} } | \xi | ^{2} e ^{- 2 | \xi | ^ { 2} } \, \mathrm { d } \xi \right) ^{\frac { 1} { 2 } } \lesssim 1 ,
	\end{align*}
	\begin{align*}
		 \limsup _{k \rightarrow \infty}\left\| \nabla _ { g } \tilde { f } _ { \mathcal { O } _ { k } ^{\prime} } \right\| _ { L ^{2} ( \mathbb { H } ^{d} ) } \lesssim 1 .
	\end{align*}
We now prove \eqref{eq1.4v28}, which is equivalent to
	\begin{align}\label{eq1.4v32}
		\left\| N _ { k } ^{- \frac { d + 2} { 2 } } e ^{N _ { k} ^{- 2} \Delta _ { g } } \delta _ { 0 } - T _ { N _ { k } } f \right\| _ { H ^{1} ( \mathbb { H } ^{d} ) } \to 0 , \mathrm { ~ a s ~ } k \to \infty .
	\end{align}
We now prove \eqref{eq1.4v32}. First, we have the following result
\begin{align}\label{eq1.4v33}
\left\| T _ { N _ { k } } f - N _ { k } ^{\frac { d - 2} { 2 } } f \left( N _ { k } \Psi _ { I } ^{- 1} \cdot \right) \right\| _ { H ^{1} \left( \mathbb { H } ^{d} \right) } \to 0 , \ \mathrm { a s } \ k \to \infty .
	\end{align}
	That is,
	\begin{align*}
		& \left\| T_{N_{k}}f - N_{k}^{\frac{d-2}{2}} f\left(N_{k}\Psi_{I}^{-1}\cdot\right)\right\|_{H^{1}(\mathbb{H}^{d}) }\\
		\leq & N_{k}^{\frac{d-2}{2}}\left\|\left(\eta\left(N_{k}^{\frac{1}{2}}\Psi_{I}^{-1}(x)\right)-1\right)\left(e^{\Delta/N_{k}}f\right)\left(N_{k}\Psi_{I}^{-1}(x)\right)\right\|_{H^{1}\left(\mathbb{H}^{d}\right)} \\
		& + N_{k}^{\frac{d-2}{2}}\left\|\left(e^{\Delta/N_{k}}f\right)\left(N_{k}\Psi_{I}^{-1}(x)\right)-f\left(N_{k}\Psi_{I}^{-1}(x)\right)\right\|_{H^{1}\left(\mathbb{H}^{d}\right)}.
	\end{align*}
We observe that
	\begin{align*}
		& N_{k}^{\frac{d-2}{2}}\left\|\left(\eta\left(N_{k}^{\frac{1}{2}}\Psi_{I}^{-1}(x)\right)-1\right)\left(e^{\Delta/N_{k}}f\right)\left(N_{k}\Psi_{I}^{-1}(x)\right)\right\|_{H^{1}\left(\mathbb{H}^{d}\right)} \\
		\leq & N_{k}^{\frac{d-2}{2}}\left\|\nabla_{g}\left(\eta\left(N_{k}^{\frac{1}{2}}\Psi_{I}^{-1}(x)\right)\right)\left(e^{\Delta/N_{k}}f\right)\left(N_{k}\Psi_{I}^{-1}(x)\right)\right\|_{L^{2}\left(\mathbb{H}^{d}\right)} \\
		& + N_{k}^{\frac{d-2}{2}}\left\|\left(\eta\left(N_{k}^{\frac{1}{2}}\Psi_{I}^{-1}(x)\right)-1\right)\nabla_{g}\left(\left(e^{\Delta/N_{k}}f\right)\left(N_{k}\Psi_{I}^{-1}(x)\right)\right)\right\|_{L^{2}\left(\mathbb{H}^{d}\right)} \\
		= & I_{1}+I_{2}.
	\end{align*}
One can check that
	\begin{align*}
		I_{1} = &N_{k}^{\frac{d-2}{2}}\left(\int_{\mathbb{R}^{d}} \sum_{i,j=1}^{d}(\delta_{ij}+v_{i}v_{j})\partial_{i}\bigl(\eta(N_{k}^{1/2}v)\bigr)\partial_{j}\bigl(\eta(N_{k}^{1/2}v)\bigr)\bigl|\bigl(e^{\Delta/N_{k}}f\bigr)(N_{k}v)\bigr|^{2}(1+|v|^{2})^{-1/2}\mathrm{d}v\right)^{1/2} \\
		\lesssim& N_{k}^{-1/2}\left(\int_{\mathbb{R}^{d}}\bigl|\bigl(e^{\Delta/N_{k}}f\bigr)(v)\bigr|^{2}\mathrm{d}v\right)^{1/2}\lesssim N_{k}^{-1/2}\|f\|_{L^{2}(\mathbb{R}^{d})} \rightarrow 0, \quad k\rightarrow\infty,
	\end{align*}
and
	\begin{align*}
		I_{2} = &N_{k}^{\frac{d-2}{2}}\left(\int_{\mathbb{R}^{d}} (1-\eta(N_{k}^{1/2}v))^{2}\sum_{i,j=1}^{d}(\delta_{ij}+v_{i}v_{j})\partial_{i}\bigl((e^{\Delta/N_{k}}f)(N_{k}v)\bigr)\partial_{j}\bigl((e^{\Delta/N_{k}}f)(N_{k}v)\bigr)(1+|v|^{2})^{-1/2}\mathrm{d}v\right)^{1/2} \\
		\lesssim &\left(\int_{|v|\geq N_{k}^{1/2}} (1+|v|^{2})^{1/2}\bigl|\nabla(e^{\Delta/N_{k}}f)(v)\bigr|^{2}\mathrm{d}v\right)^{1/2} \rightarrow 0, \quad k\rightarrow\infty,
	\end{align*}
	as
	\begin{align*}
		\left|\nabla(e^{\Delta/N_{k}}f)(v)\right| = \left|\nabla e^{(1+1/N_{k})\Delta}\delta_{0}(v)\right| = \left|\nabla\left(\frac{1}{(4\pi(1+1/N_{k}))^{d/2}}e^{-|v|^{2}/(4(1+1/N_{k}))}\right)\right| \sim |v|e^{-|v|^{2}/4},
	\end{align*}
	for large enough $k$, which leads to
	\begin{align*}
		(1+|v|^{2})^{1/2}\left|\nabla(e^{\Delta/N_{k}}f)(v)\right|^{2} \in L^{1}(\mathbb{R}^{d}).
	\end{align*}
	Therefore
	\begin{align*}
		N_{k}^{\frac{d-2}{2}}\left\|\left(\eta(N_{k}^{1/2}\Psi_{I}^{-1}(x))-1\right)\left(e^{\Delta/N_{k}}f\right)\left(N_{k}\Psi_{I}^{-1}(x)\right)\right\|_{H^{1}(\mathbb{H}^{d})} \rightarrow 0, \quad k\rightarrow\infty.
	\end{align*}
	Moreover
	\begin{align*}
		& N_{k}^{\frac{d-2}{2}}\left\|\left(e^{\Delta/N_{k}}f\right)\left(N_{k}\Psi_{I}^{-1}(x)\right)-f\left(N_{k}\Psi_{I}^{-1}(x)\right)\right\|_{H^{1}(\mathbb{H}^{d})} \\
		= & N_{k}^{\frac{d-2}{2}}\left(\int_{\mathbb{R}^{d}} \sum_{i,j=1}^{d}(\delta_{ij}+v_{i}v_{j})\partial_{i}\bigl((e^{\Delta/N_{k}}f)(N_{k}v)-f(N_{k}v)\bigr)\partial_{j}\bigl((e^{\Delta/N_{k}}f)(N_{k}v)-f(N_{k}v)\bigr)(1+|v|^{2})^{-1/2}\mathrm{d}v\right)^{1/2} \\
		\leq & \left(\int_{\mathbb{R}^{d}} (1+|v|^{2})^{1/2}\bigl|\nabla(e^{\Delta/N_{k}}f-f)(v)\bigr|^{2}\mathrm{d}v\right)^{1/2}.
	\end{align*}
	From \eqref{eq1.4v30}, it follows that
	\begin{align*}
		e^{\Delta/N_{k}}f(v) = e^{(1+1/N_{k})\Delta}\delta_{0}(v) = \frac{1}{(4\pi(1+1/N_{k}))^{d/2}} e^{-|v|^{2}/(4(1+1/N_{k}))},
	\end{align*}
and
	\begin{align*}
		\nabla(e^{\Delta/N_{k}}f-f)(v) = -\frac{1}{(4\pi(1+1/N_{k}))^{d/2}}\frac{v}{2(1+1/N_{k})}e^{-|v|^{2}/(4(1+1/N_{k}))} + \frac{v}{2(4\pi)^{d/2}}e^{-|v|^{2}/4}.
	\end{align*}
By the dominated convergence theorem, we have
	\begin{align*}
		\int_{\mathbb{R}^{d}} (1+|v|^{2})^{1/2}\bigl|\nabla(e^{\Delta/N_{k}}f-f)(v)\bigr|^{2}\mathrm{d}v \rightarrow 0, \quad k\rightarrow\infty,
	\end{align*}
	which implies that
	\begin{align*}
		N_{k}^{\frac{d-2}{2}}\left\|\left(e^{\Delta/N_{k}}f\right)\left(N_{k}\Psi_{I}^{-1}(x)\right)-f\left(N_{k}\Psi_{I}^{-1}(x)\right)\right\|_{H^{1}(\mathbb{H}^{d})} \rightarrow 0, \quad k\rightarrow\infty.
	\end{align*}
	Thus, we obtain \eqref{eq1.4v33}. By applying \eqref{eq1.4v33}, we see that \eqref{eq1.4v32} is simplified to
	\begin{align}\label{eq1.4v34}
		\left\| N_{k}^{-\frac{d+2}{2}} e^{N_{k}^{-2}\Delta_{g}}\delta_{\mathbf{0}} - N_{k}^{\frac{d-2}{2}} f\left(N_{k}\Psi_{I}^{-1}\cdot\right)\right\|_{H^{1}(\mathbb{H}^{d})} \rightarrow 0, \quad k\rightarrow\infty.
	\end{align}
	This implies that \eqref{eq1.4v30} leads to
	\begin{align*}
		N_{k}^{\frac{d-2}{2}} f(N_{k}v) = N_{k}^{-\frac{d+2}{2}} e^{N_{k}^{-2}\Delta_{g}}\delta_{0}(v),
	\end{align*}
	and
	\begin{align*}
		\left\| N_{k}^{-\frac{d+2}{2}} e^{N_{k}^{-2}\Delta_{g}}\delta_{\mathbf{0}} - N_{k}^{\frac{d-2}{2}} f\left(N_{k}\Psi_{I}^{-1}\cdot\right)\right\|_{H^{1}(\mathbb{H}^{d})} = N_{k}^{-\frac{d+2}{2}} \left\| \left(e^{N_{k}^{-2}\Delta_{g}}\delta_{\mathbf{0}}\right)(\Psi_{I}^{-1}(x)) - e^{N_{k}^{-2}\Delta_{g}}\delta_{\mathbf{0}}(x) \right\|_{H^{1}(\mathbb{H}^{d})}.
	\end{align*}
Therefore, \eqref{eq1.4v34} becomes a consequence of the heat kernel estimate, and we will demonstrate that
	\begin{align}\label{eq1.4v35}
		N_{k}^{-\frac{d+2}{2}} \left\| \left(e^{N_{k}^{-2}\Delta}\delta_{0}\right)(\Psi_{I}^{-1}(x)) - e^{N_{k}^{-2}\Delta_{g}}\delta_{0}(x) \right\|_{H^{1}(\mathbb{H}^{d})} \rightarrow 0, \quad k\rightarrow\infty.
	\end{align}
	Since $N_k \to \infty$ as $k \to \infty$, a classical result (for instance, see \cite{Li2012}, Theorem 11.1) implies that
	\begin{align}\label{eq1.4v36}
		e^{N_{k}^{-2}\Delta_{g}}\delta_{\mathbf{0}}(x) \rightarrow \left(e^{N_{k}^{-2}\Delta}\delta_{0}\right)(\Psi_{I}^{-1}(x)), \quad k\rightarrow\infty.
	\end{align}
	From the equivalence of $H^{1}(\mathbb{H}^{d})$ and $\dot{H}^{1}(\mathbb{H}^{d})$, we apply integration by parts to deduce
	\begin{align*}
		& N_{k}^{-(d+2)} \left\| \left(e^{N_{k}^{-2}\Delta}\delta_{0}\right)(\Psi_{I}^{-1}(x)) - e^{N_{k}^{-2}\Delta_{g}}\delta_{\mathbf{0}}(x) \right\|_{H^{1}(\mathbb{H}^{d})}^{2} \\
		= & N_{k}^{-(d+2)} \left\langle \left(e^{N_{k}^{-2}\Delta}\delta_{0}\right)(\Psi_{I}^{-1}(x)) - e^{N_{k}^{-2}\Delta_{g}}\delta_{\mathbf{0}}(x), \Delta_{g}\left(\left(e^{N_{k}^{-2}\Delta}\delta_{0}\right)(\Psi_{I}^{-1}(x)) - e^{N_{k}^{-2}\Delta_{g}}\delta_{\mathbf{0}}(x)\right)\right\rangle_{L^{2}(\mathbb{H}^{d})\times L^{2}(\mathbb{H}^{d})}.
	\end{align*}
	Next, we introduce polar coordinates. Let $r = d_{g}(\mathbf{0},x)$ for $x = (\sqrt{1+|v|^{2}},v) \in \mathbb{H}^{d}$, then
\begin{align*}
\cosh r = \cosh d_{g}(\mathbf{0},x) = [\mathbf{0},x] = \sqrt{1+|v|^{2}} \Rightarrow |v|^{2} = \sinh^{2} r,
\end{align*}
and so $|\Psi_{I}^{-1}(x)|^{2}=|v|^{2}=\sinh^{2}r$. Hence,
	\begin{align*}
		\left(e^{N_{k}^{-2}\Delta}\delta_{0}\right)(\Psi_{I}^{-1}(x)) = \frac{1}{(4\pi N_{k}^{-2})^{d/2}} e^{-\frac{|\Psi_{I}^{-1}(x)|^{2}}{4N_{k}^{-2}}} = \frac{N_{k}^{d}}{(4\pi)^{d/2}} e^{-\frac{N_{k}^{2}\sinh^{2}r}{4}}.
	\end{align*}
	 If $d=2m+1$,
	\begin{align*}
		e^{N_{k}^{-2}\Delta_{g}}\delta_{\mathbf{0}}(x) = \frac{(-1)^{\frac{d-1}{2}}}{(2\pi)^{\frac{d-1}{2}}} \frac{e^{-\frac{(d-1)^{2}N_{k}^{-2}}{4}}}{(4\pi N_{k}^{-2})^{1/2}} \left(\frac{1}{\sinh r}\frac{\partial}{\partial r}\right)^{\frac{d-1}{2}} e^{-\frac{N_{k}^{2}r^{2}}{4}},
	\end{align*}
	if $d=2m+2$,
	\begin{align*}
		e^{N_{k}^{-2}\Delta_{g}}\delta_{\mathbf{0}}(x) = \frac{(-1)^{\frac{d}{2}-1}}{2(2\pi)^{\frac{d}{2}-1}} N_{k}^{3} e^{-\frac{(d-1)^{2}N_{k}^{-2}}{4}} \left(\frac{1}{\sinh r}\frac{\partial}{\partial r}\right)^{\frac{d}{2}-1} \int_{r}^{\infty} \frac{s e^{-s^{2}N_{k}^{2}/4}}{(\cosh s - \cosh r)^{1/2}} \mathrm{d}s.
	\end{align*}
	For the radial case, $\Delta_{g}=\partial_{r}^{2}+\frac{(d-1)\cosh r}{\sinh r}\partial_{r}$. We apply $\Delta_{g}$ to the two quantities $e^{N_{k}^{-2}\Delta_{g}}\delta_{0}(x)$ and $\left(e^{N_{k}^{-2}\Delta}\delta_{0}\right)(\Psi_{I}^{-1}(x)$ sequentially, and we obtain the principal parts as follows:
	\begin{align*}
		\Delta_{g}\left(\left(e^{N_{k}^{-2}\Delta}\delta_{0}\right)(\Psi_{I}^{-1}(x))\right) = &\left(\frac{N_{k}^{4}}{4}(\sinh r\cosh r)^{2} - \frac{N_{k}^{2}}{2}(\cosh^{2}r+\sinh^{2}r)\right)\left(e^{N_{k}^{-2}\Delta}\delta_{0}\right)(\Psi_{I}^{-1}(x)) \\
		&- \frac{N_{k}^{2}}{2}(d-1)\cosh^{2}r\left(e^{N_{k}^{-2}\Delta}\delta_{0}\right)(\Psi_{I}^{-1}(x)).
	\end{align*}
	
 We now give a comprehensive analysis for the Laplacian of heat kernels. For the odd dimensional case, the heat kernel has a simple formula, while is a non-local term for the case of even dimension. For $d=2m+1$, the differentiation with respect to $r$ gives
	\[
	\begin{aligned}
		\pa_r  e^{N_{k}^{-2}\Delta_{g}}\delta_{\mathbf{0}}(x) &=  \frac{(-1)^{\frac{d-1}{2}}}{(2\pi)^{\frac{d-1}{2}}} \frac{e^{-\frac{(d-1)^{2}N_{k}^{-2}}{4}}}{(4\pi N_{k}^{-2})^{1/2}} \sum_{j=1}^m\left(\frac{1}{\sinh r}\frac{\partial}{\partial r}\right)^{j-1} \left( - \frac{\cosh r}{\sinh r}\left(\frac{1}{\sinh r}\frac{\partial}{\partial r}\right)^{m-j+1}\right)e^{-\frac{N_{k}^{2}r^{2}}{4}}\\
		& \quad + \frac{(-1)^{\frac{d-1}{2}}}{(2\pi)^{\frac{d-1}{2}}} \frac{e^{-\frac{(d-1)^{2}N_{k}^{-2}}{4}}}{(4\pi N_{k}^{-2})^{1/2}}\left(\frac{1}{\sinh r}\frac{\partial}{\partial r}\right)^{\frac{d-1}{2}} \left(-\frac{N_k r}{2} e^{-\frac{N_{k}^{2}r^{2}}{4}} \right).
	\end{aligned}	
	\]
	We note that the differentiation for $\frac{1}{\sinh r}$ has no effect for $N_k$. In the following, we mainly handle the limit as $k \to \infty$. So we can split
	\[
	\pa_r  e^{N_{k}^{-2}\Delta_{g}}\delta_{\mathbf{0}}(x) = -\frac{N_k r}{2}e^{N_{k}^{-2}\Delta_{g}}\delta_{\mathbf{0}}(x) + \mathcal{O}(N_{k}^{2})
	\]
	Here, $\mathcal{O}(N_{k}^{2})$ denotes the lower-order terms of $N_{k}$ satisfying $\|\mathcal{O}(N_{k}^{2})\|_{L^{2}(\mathbb{H}^{d})}\lesssim_{d}N_{k}^{2}$.
	Due to the formula of $\Delta_g$, we obtain 
	\begin{align*}
		\Delta_{g} e^{N_{k}^{-2}\Delta_{g}}\delta_{\mathbf{0}}(x) = \left(\frac{N_{k}^{4}}{4}r^{2} - \frac{N_{k}^{2}}{2}\right)e^{N_{k}^{-2}\Delta_{g}}\delta_{\mathbf{0}}(x) - \frac{N_{k}^{2}}{2}(d-1)r\frac{\cosh r}{\sinh r}e^{N_{k}^{-2}\Delta_{g}}\delta_{\mathbf{0}}(x) +  \mathcal{O}(N_{k}^{2}),
	\end{align*}
	
	For $d=2m+2$, then an explicit computation gives that
	\[
	\begin{aligned}
		\pa_r  e^{N_{k}^{-2}\Delta_{g}}\delta_{\mathbf{0}}(x)& = \frac{(-1)^{\frac{d}{2}-1}}{2(2\pi)^{\frac{d}{2}-1}} N_{k}^{3} e^{-\frac{(d-1)^{2}N_{k}^{-2}}{4}} \sum_{j=1}^m\left(\frac{1}{\sinh r}\frac{\partial}{\partial r}\right)^{j-1} \left( - \frac{\cosh r}{\sinh r}\left(\frac{1}{\sinh r}\frac{\partial}{\partial r}\right)^{m-j+1}\right) \\
		&\quad \times \int_{r}^{\infty} \frac{s e^{-s^{2}N_{k}^{2}/4}}{(\cosh s - \cosh r)^{1/2}} \mathrm{d}s+ \frac{(-1)^{\frac{d}{2}-1}}{2(2\pi)^{\frac{d}{2}-1}} N_{k}^{3} e^{-\frac{(d-1)^{2}N_{k}^{-2}}{4}} \left(\frac{1}{\sinh r}\frac{\partial}{\partial r}\right)^{\frac{d}{2}-1}\\
		&\quad \times \left( - \lim_{s\to r^+} \frac{s e^{-s^2N_k^2/4}}{(\cosh s - \cosh r)^{1/2}} + \int_{r}^{\infty} \frac{s e^{-s^2N_k^2/4} \sinh r}{2 (\cosh s - \cosh r)^{3/2}} \, \mathrm{d}s\right)
	\end{aligned}
	\]
	By a careful asymptotic analysis near $r$, the singularity between the above two terms on the right-hand side is canceled. Then by the theory of Bessel function, we can obtain
	\[
	\begin{aligned}
		\pa_r  e^{N_{k}^{-2}\Delta_{g}}\delta_{\mathbf{0}}(x)& = \frac{(-1)^{\frac{d}{2}-1}}{2(2\pi)^{\frac{d}{2}-1}} N_{k}^{3} e^{-\frac{(d-1)^{2}N_{k}^{-2}}{4}} \sum_{j=1}^m\left(\frac{1}{\sinh r}\frac{\partial}{\partial r}\right)^{j-1} \left( - \frac{\cosh r}{\sinh r}\left(\frac{1}{\sinh r}\frac{\partial}{\partial r}\right)^{m-j+1}\right) \\
		&\quad \times \int_{r}^{\infty} \frac{s e^{-s^{2}N_{k}^{2}/4}}{(\cosh s - \cosh r)^{1/2}} \mathrm{d}s+ \frac{(-1)^{\frac{d}{2}-1}}{2(2\pi)^{\frac{d}{2}-1}} N_{k}^{3} e^{-\frac{(d-1)^{2}N_{k}^{-2}}{4}} \left(\frac{1}{\sinh r}\frac{\partial}{\partial r}\right)^{\frac{d}{2}-1}\\
		&\quad \times  \frac{-r  e^{-r^2N_k^2/4}}{\sqrt{\sinh r}},
	\end{aligned} 
	\]
	and then by the definition of $\Delta_g$,
	\begin{align*}
		\Delta_{g} e^{N_{k}^{-2}\Delta_{g}}\delta_{\mathbf{0}}(x) &= \frac{(-1)^{\frac{d}{2}-1}}{2(2\pi)^{\frac{d}{2}-1}} N_{k}^{3} e^{-\frac{(d-1)^{2}N_{k}^{-2}}{4}} \bigg[ -\frac{N_k^2}{2} \left(\frac{1}{\sinh r}\frac{\partial}{\partial r}\right)^{\frac{d}{2}-1}  \int_{r}^{\infty} \frac{s e^{-s^{2}N_{k}^{2}/4}}{(\cosh s - \cosh r)^{1/2}} \mathrm{d}s \\
		&\quad- \frac{2d-5}{2} \left(\frac{1}{\sinh r}\frac{\partial}{\partial r}\right)^{\frac{d}{2}-1}  \left( \frac{r \cosh r \, e^{-r^2 N_k^2/4}}{(\sinh r)^{3/2}} \right) \bigg]\\
		& = -\frac{N_k^2}{2} e^{N_{k}^{-2}\Delta_{g}}\delta_{\mathbf{0}}(x) -\frac{2d-5}{2} \left(\frac{1}{\sinh r}\frac{\partial}{\partial r}\right)^{\frac{d}{2}-1}  \left( \frac{r \cosh r \, e^{-r^2 N_k^2/4}}{(\sinh r)^{3/2}} \right).
	\end{align*}
 To estimate \eqref{eq1.4v35}, it suffices to prove that the highest-order part in $N_{k}$ of
\eqref{eq1.4v35} tends to vanish as $N_{k}\rightarrow\infty$, since the lower-order terms are bounded by $o(1/N_k)$. By changing the variable $r$ to $r/N_k$, we obtain the following result for the case of $d=2m+1$:
	\begin{align*}
		& N_{k}^{-(d+2)}\left\|\left(\frac{N_{k}^{4}}{4}(\sinh r\cosh r)^{2}\right)\left(e^{N_{k}^{-2}\Delta}\delta_{0}\right)(\Psi_{I}^{-1}(\cdot))-\left(\frac{N_{k}^{4}}{4}r^{2}\right)e^{N_{k}^{-2}\Delta_{g}}\delta_{0}\right\|_{L^{2}(\mathbb{H}^{d})}^{2} \\
		= & C_{d}N_{k}^{-(d+2)}\frac{1}{16}\int_{0}^{\infty}\Bigg[N_{k}^{2}\left(N_{k}^{2}\sinh^{2}\left(\frac{r}{N_{k}}\right)\cosh^{2}\left(\frac{r}{N_{k}}\right)\frac{N_{k}^{d}}{(4\pi)^{d/2}}e^{-\frac{N_{k}^{2}\sinh^{2}(r/N_{k})}{4}}\right) \\
		& - N_{k}^{2}r^{2}\frac{(-1)^{\frac{d-1}{2}}}{(2\pi)^{\frac{d-1}{2}}}\frac{e^{-\frac{(d-1)^{2}N_{k}^{2}}{4}}}{(4\pi N_{k}^{-2})^{1/2}}\left(\frac{N_{k}}{\sinh(r/N_{k})}\frac{\partial}{\partial r}\right)^{\frac{d-1}{2}}e^{-\frac{r^{2}}{4}}\Bigg]^{2}\frac{\sinh^{d-1}(r/N_{k})}{N_{k}}\mathrm{d}r,
	\end{align*}
	and for the case of $d=2m+2$:
	\begin{align*}
		& N_{k}^{-(d+2)}\left\|\left(\frac{N_{k}^{4}}{4}(\sinh r\cosh r)^{2}\right)\left(e^{N_{k}^{-2}\Delta}\delta_{0}\right)(\Psi_{I}^{-1}(x))-\left(\frac{N_{k}^{2}}{2}\right)e^{N_{k}^{-2}\Delta_{g}}\delta_{\mathbf{0}}\right\|_{L^{2}(\mathbb{H}^{d})}^{2} \\
		= & C_{d}N_{k}^{-(d+2)}\frac{1}{16}\int_{0}^{\infty}\Bigg[N_{k}^{2}\left(N_{k}^{2}\sinh^{2}\left(\frac{r}{N_{k}}\right)\cosh^{2}\left(\frac{r}{N_{k}}\right)\frac{N_{k}^{d}}{(4\pi)^{d/2}}e^{-\frac{N_{k}^{2}\sinh^{2}(r/N_{k})}{4}}\right) \\
		& - \frac{(-1)^{\frac{d}{2}-1}}{(2\pi)^{\frac{d}{2}-1}}\frac{N_{k}^{3}}{e^{\frac{(d-1)^{2}N_{k}^{-2}}{4}}}\left(\frac{N_{k}}{\sinh\left(\frac{r}{N_{k}}\right)}\frac{\partial}{\partial r}\right)^{\frac{d}{2}-1}\int_{\frac{r}{N_{k}}}^{\infty}\frac{se^{-s^{2}N_{k}^{2}/4}}{\sqrt{\cosh s-\cosh\left(\frac{r}{N_{k}}\right)}}\mathrm{d}s\Bigg]^{2}\frac{\sinh^{d-1}\left(\frac{r}{N_{k}}\right)}{N_{k}}\mathrm{d}r. 
	\end{align*} 
	Now, observe that
	\begin{align*}
		N_{k}\sinh\frac{r}{N_{k}} \rightarrow r, \quad k\rightarrow\infty,
	\end{align*}
	then by applying \eqref{eq1.4v36}, we can conclude that the integrand in
	\begin{multline}\label{eq1.4v37}
		N_{k}^{-(d+2)}\left\langle \left(e^{N_{k}^{-2}\Delta}\delta_{0}\right)(\Psi_{I}^{-1}(\cdot)) - e^{N_{k}^{-2}\Delta_{g}}\delta_{\mathbf{0}}(\cdot),\right.\\
		\left. N_{k}^{4}\left((\sinh r\cosh r)^{2}\left(e^{N_{k}^{-2}\Delta}\delta_{0}\right)(\Psi_{I}^{-1}(\cdot)) - r^{2}e^{N_{k}^{-2}\Delta_{g}}\delta_{\mathbf{0}}(\cdot)\right)\right\rangle_{L^{2}(\mathbb{H}^{d})\times L^{2}(\mathbb{H}^{d})}
	\end{multline}
vanishes as $k\to\infty$. In terms of the factors $e^{-r^{2}/4}$ and $e^{-\sinh^{2}r/4}$, we find that the above integrands can be controlled by some Gaussian functions independent of $N_{k}$ for sufficiently large $k$. Then \eqref{eq1.4v37} tends to zero as $k\to\infty$, which is the desired result. Regarding the remaining terms of $N_{k}^{2}$ in $\left(e^{N_{k}^{-2}\Delta}\delta_{0}\right)(\Psi_{I}^{-1}(x))$ and $e^{N_{k}^{-2}\Delta_{g}}\delta_{0}(x)$, they are of lower order in $N_{k}$, so that their $L^{2}$-norms tend to zero as $k\to\infty$. Thus, we have proved \eqref{eq1.4v35}.
	
	Since $g_{k}$ is absent from the frames $\mathcal{O}^{\alpha}$ for $\alpha \leq A$, it follows from \eqref{eq1.4v28} that
\begin{align*}
\frac{\delta}{2} \leq &\lim_{k\to\infty} \left|\left\langle \Pi_{-t_{k},h_{k}^{-1}}g_{k}, N_{k}^{-\frac{d-2}{2}} e^{N_{k}^{-2}\Delta_{g}}\delta_{\mathbf{0}} \right\rangle_{L^{2}\times L^{2}(\mathbb{H}^{d})}\right|\\
\leq& \lim_{k\to\infty} \left|\left\langle \nabla_{g}\Pi_{-t_{k},h_{k}^{-1}}g_{k}, \nabla_{g}\pi_{h_{k}^{-1}}\tilde{f}_{\mathcal{O}_{k}'}\right\rangle_{L^{2}\times L^{2}(\mathbb{H}^{d})}\right|\\
&+ \lim_{k\to\infty} \sup \left|\left\langle \nabla_{g}\Pi_{-t_{k},h_{k}^{-1}}g_{k}, \nabla_{g}\left(\Pi_{-t_{k},h_{k}^{-1}}\tilde{f}_{\mathcal{O}_{k}'} - N_{k}^{-\frac{d+2}{2}}e^{N_{k}^{-2}\Delta_{g}}\delta_{\mathbf{0}}\right)\right\rangle_{L^{2}\times L^{2}(\mathbb{H}^{d})}\right| \\
=& \lim_{k\to\infty} \sup \left|\left\langle \nabla_{g}g_{k}, \nabla_{g}\tilde{f}_{\mathcal{O}_{k}'}\right\rangle_{L^{2}\times L^{2}(\mathbb{H}^{d})}\right|.
\end{align*}
Thus, we have $\mathcal{O}'\perp\mathcal{O}^{\alpha}$, for $\alpha\leq A$.
	
	Now, in the case where $\mathcal{O}'$ is a hyperbolic frame, we take $\psi \in H^{1}(\mathbb{H}^{d})$ to be the weak limit of $\Pi_{-t_{k},h_{k}^{-1}}g_{k}$. Then, passing to a subsequence, we may assume that $\forall \varphi \in H^{1}(\mathbb{H}^{d})$,
	\begin{align*}
		\left\langle \nabla_{g}\left(\Pi_{-t_{k},h_{k}^{-1}}g_{k} - \psi\right), \nabla_{g}\varphi\right\rangle_{L^{2}\times L^{2}} = \left\langle \nabla_{g}\left(g_{k} - \Pi_{-t_{k}^{-1},h_{k}}\psi\right), \nabla_{g}\Pi_{-t_{k}^{-1},h_{k}}\varphi\right\rangle_{L^{2}\times L^{2}} \to 0, \quad k\to\infty.
	\end{align*}
And	so $g_{k}^{\prime}=g_{k}-\Pi_{-t_{k}^{-1},h_{k}}\psi$ is absent from $\mathcal{O}^{\prime}$.
	
	Specifically, it follows \eqref{eq1.4v26} that we obtain
	\begin{align*}
		\frac{\delta}{2} \leq& \lim_{k\to\infty} \left|\left\langle \Pi_{-t_{k},h_{k}^{-1}}g_{k}, \Delta_{g}N_{k}^{-\frac{d+2}{2}}\left(e^{N_{k}^{-2}\Delta_{g}}\delta_{\mathbf{0}}\right)\right\rangle_{L^{2}\times L^{2}}\right|\\
		\leq &\left|\left\langle \psi, \Delta_{g}\left(N^{-\frac{d+2}{2}}e^{N_{k}^{-2}\Delta_{g}}\delta_{\mathbf{0}}\right)\right\rangle_{L^{2}\times L^{2}}\right| \\
		\lesssim& \|\nabla_{g}\psi\|_{L^{2}(\mathbb{H}^{d})},
	\end{align*}
	which yields \eqref{le.4v71}.
	
	Finally, in order to prove that $(g'_k)_k$ is orthogonal to all frames $\mathcal{O}^\alpha$, $1 \leq \alpha \leq A$, it is enough to demonstrate that  $\widetilde{\psi}_{\mathcal{O}_k'}$ is orthogonal to each $\mathcal{O}^\alpha$, $1 \leq \alpha \leq A$. However, because $\mathcal{O}^\alpha \perp \mathcal{O}^k$, $1 \leq \alpha \leq A$. Lemma \ref{le.2v3} (ii) implies that $\widetilde{\psi}_{\mathcal{O}_k'}$ is indeed absent from each $\mathcal{O}^\alpha$, $1 \leq \alpha \leq A$.
	
	If $\mathcal{O}'$ is a Euclidean frame, we begin by taking $R > 0$ and defining
	\begin{align*}
		\phi_{k}^{R}(v) = N_{k}^{-\frac{d-2}{2}} \eta\left(\frac{v}{R}\right) \left(\Pi_{-t_{k},h_{k}^{-1}}g_{k}\right)\left(\Psi_{I}\left(\frac{v}{N_{k}}\right)\right),
	\end{align*}
where $\eta$ is the smooth cut-off function from \eqref{eq1.3v1}. Differentiating, we obtain
	\begin{align*}
		\nabla\phi_{k}^{R}(v)=&N_{k}^{-\frac{d-2}{2}}R^{-1}(\nabla\eta)\left(\frac{v}{R}\right)\left(\Pi_{-t_{k},h_{k}^{-1}}g_{k}\right)\left(\Psi_{I}\left(\frac{v}{N_{k}}\right)\right)\\
		&+N_{k}^{-\frac{d-2}{2}}N_{k}^{-1}\eta\left(\frac{v}{R}\right)\left(\nabla\left(\Pi_{-t_{k},h_{k}^{-1}}g_{k}\right)(\Psi_{I}\cdot)\right)\left(\frac{v}{N_{k}}\right),
	\end{align*}
We estimate the two terms separately. For the first term, H\"older's inequality gives
	\begin{align*}
		& \left\| N_{k}^{-\frac{d-2}{2}}R^{-1}(\nabla\eta)\left(\frac{v}{R}\right)\left(\Pi_{-t_{k},h_{k}^{-1}}g_{k}\right)\left(\Psi_{I}\left(\frac{v}{N_{k}}\right)\right)\right\|_{L^{2}(\mathbb{R}^{d})} \\
		\leq & N_{k}^{-\frac{d-2}{2}}R^{-1}\left\|(\nabla\eta)\left(\frac{v}{R}\right)\right\|_{L^{d}(\mathbb{R}^{d})} \left\|\left(\Pi_{-t_{k},h_{k}^{-1}}g_{k}\right)\left(\Psi_{I}\left(\frac{v}{N_{k}}\right)\right)\right\|_{L^{2^{*}}(|v|\leq 2R)} \\
		\lesssim & \left(1+\frac{R^{2}}{N_{k}^{2}}\right)^{\frac{d-2}{4d}} \left(\int_{|v|\leq\frac{2R}{N_{k}}} \left|\left(\Pi_{-t_{k},h_{k}^{-1}}g_{k}\right)(\Psi_{I}(v))\right|^{\frac{2d}{d-2}} (1+|v|^{2})^{-\frac{1}{2}} \mathrm{d}v\right)^{\frac{d-2}{2d}} \\
		\lesssim & \left(1+\frac{R^{2}}{N_{k}^{2}}\right)^{\frac{d-2}{4d}} \left\|\Pi_{-t_{k},h_{k}^{-1}}g_{k}\right\|_{L^{2^{*}}(\mathbb{H}^{d})} \lesssim \|\nabla_{g}g_{k}\|_{L^{2}(\mathbb{H}^{d})}, 
	\end{align*}
for sufficiently large $k$. For the second term, we have
	\begin{align*}
		\left\| N_{k}^{-\frac{d-2}{2}}N_{k}^{-1}\eta\left(\frac{v}{R}\right)\left(\nabla\left(\Pi_{-t_{k},h_{k}^{-1}}g_{k}\right)(\Psi_{I}\cdot)\right)\left(\frac{v}{N_{k}}\right)\right\|_{L^{2}(\mathbb{R}^{d})} \leq \left\| \nabla\left(\left(\Pi_{-t_{k},h_{k}^{-1}}g_{k}\right)(\Psi_{I}(v))\right) \right\|_{L^{2}(|v|\leq \frac{2R}{N_{k}})},
	\end{align*}
	For $x=\Psi_{I}(v)$ with $|v|\leq 2R/N_{k}$, we note that
	\begin{align*}
		|\nabla_{g}\left(\left(\Pi_{-t_{k},h_{k}^{-1}}g_{k}\right)(x)\right)|^{2} =&\sum_{i,j=1}^{d} (\delta_{ij}+v_{i}v_{j})\partial_{i}\left(\left(\Pi_{-t_{k},h_{k}^{-1}}g_{k}\right)(\Psi_{I}(v))\right)\partial_{j}\left(\left(\Pi_{-t_{k},h_{k}^{-1}}g_{k}\right)(\Psi_{I}(v))\right) \\
		\geq& \left|\nabla\left(\left(\Pi_{-t_{k},h_{k}^{-1}}g_{k}\right)(\Psi_{I}(v))\right)\right|^{2} - |v|^{2}\left|\nabla\left(\left(\Pi_{-t_{k},h_{k}^{-1}}g_{k}\right)(\Psi_{I}(v))\right)\right|^{2} \\
		\geq& \left(1-\frac{4R^{2}}{N_{k}^{2}}\right)\left|\nabla\left(\left(\Pi_{-t_{k},h_{k}^{-1}}g_{k}\right)(\Psi_{I}(v))\right)\right|^{2} \geq \frac{1}{2}\left|\nabla\left(\left(\Pi_{-t_{k},h_{k}^{-1}}g_{k}\right)(\Psi_{I}(v))\right)\right|^{2},
	\end{align*}
for sufficiently large $k$. Consequently,
	\begin{align*}
		\left\| \nabla\left(\left(\Pi_{-t_{k},h_{k}^{-1}}g_{k}\right)(\Psi_{I}(v))\right) \right\|_{L^{2}(|v|\leq \frac{2R}{N_{k}})} \lesssim& \left(1+\frac{R^{2}}{N_{k}^{2}}\right)^{\frac{1}{4}} \left(\int_{\mathbb{R}^{d}} \left|\left(\nabla_{g}\left(\Pi_{-t_{k},h_{k}^{-1}}g_{k}\right)\right)(\Psi_{I}(v))\right|^{2} (1+|v|^{2})^{-\frac{1}{2}} \mathrm{d}v\right)^{\frac{1}{2}} \\
\lesssim& \|\nabla_{g}g_{k}\|_{L^{2}(\mathbb{H}^{d})}, 
\end{align*}
$\text{when } k \text{ large enough}$. This yields
	\begin{align*}
		\limsup _{k \rightarrow \infty}\|\nabla\phi_{k}^{R}\|_{L^{2}(\mathbb{R}^{d})} \lesssim  \limsup _{k \rightarrow \infty}\|\nabla_{g}g_{k}\|_{L^{2}(\mathbb{H}^{d})},
	\end{align*}
	and so $\{\phi_{k}^{R}\}$ contains a subsequence that is bounded in $\dot{H}^{1}(\mathbb{R}^{d})$ uniformly in the parameter $R>0$. By passing again to a subsequence, we can find a weak limit $\phi^{R}\in\dot{H}^{1}(\mathbb{R}^{d})$. The uniformity of the bound in $R>0$ allows us to let $R\to\infty$ and extract a weak limit $\phi\in\dot{H}^{1}(\mathbb{R}^{d})$ satisfying
	\begin{align*}
		\phi^{R} \to \phi \quad \text{in } \dot{H}^{1}(\mathbb{R}^{d}), \quad \text{as } R\to\infty.
	\end{align*}
	Next, we will prove $g'_k = g_k - \widetilde{\phi}_{\mathcal{O}'_k}$ is absent from $\mathcal{O}'$, it follows the boundedness of $T_{N'_k}$ that we will prove $\forall \psi \in C^{\infty}_{0}(\mathbb{R}^{d})$,
	\begin{align}\label{eq1.4v38}
		\left\langle \nabla_{g}\left(g_{k} - \tilde{\phi}_{\mathcal{O}_{k}'}\right), \nabla_{g}\tilde{\psi}_{\mathcal{O}_{k}'}\right\rangle_{L^{2}\times L^{2}(\mathbb{H}^{d})} \to 0, \quad k\to\infty.
	\end{align}
Since $\psi \in C_{0}^{\infty}(\mathbb{R}^{d})$, we have 
	\begin{align}\label{eq1.4v39}
		\left\| T_{N_{k}}\psi - N_{k}^{\frac{d-2}{2}}\psi\left(N_{k}\Psi_{I}^{-1}\cdot\right)\right\|_{H^{1}(\mathbb{H}^{d})} \to 0, \quad k\to\infty,
	\end{align}
Then, using \eqref{eq1.4v39} and Lemma \ref{le.4v4} (iii) , we obtain
	\begin{align*}
		& \left\langle \nabla_{g}\left(g_{k} - \bar{\phi}_{\mathcal{O}_{k}'}\right), \nabla_{g}\bar{\psi}_{\mathcal{O}_{k}'}\right\rangle_{L^{2}\times L^{2}(\mathbb{H}^{d})} \\
		= & \left\langle \nabla_{g}\Pi_{-t_{k},h_{k}^{-1}}g_{k}, \nabla_{g}\left(N_{k}^{\frac{d-2}{2}}\psi\left(N_{k}\Psi_{I}^{-1}\cdot\right)\right)\right\rangle_{L^{2}\times L^{2}(\mathbb{H}^{d})} - \left\langle \nabla\phi, \nabla\psi\right\rangle_{L^{2}\times L^{2}(\mathbb{H}^{d})} + o_{k}(1), \quad k\to\infty.
	\end{align*}
	Therefore, \eqref{eq1.4v38} is the same as
	\begin{align*}
		\left\langle \nabla_{g}\Pi_{-t_{k},h_{k}^{-1}}g_{k}, \nabla_{g}\left(N_{k}^{\frac{d-2}{2}}\psi\left(N_{k}\Psi_{I}^{-1}\cdot\right)\right)\right\rangle_{L^{2}\times L^{2}(\mathbb{H}^{d})} \to \left\langle \nabla\phi, \nabla\psi\right\rangle_{L^{2}\times L^{2}(\mathbb{R}^{d})}, \quad k\to\infty.
	\end{align*}
	Then
	\begin{align*}
		& \left\langle \nabla_{g}\Pi_{-t_{k},h_{k}^{-1}}g_{k}, \nabla_{g}\left(N_{k}^{\frac{d-2}{2}}\psi\left(N_{k}\Psi_{I}^{-1}\cdot\right)\right)\right\rangle_{L^{2}\times L^{2}(\mathbb{H}^{d})} \\
		= & \int_{\mathbb{R}^{d}} \sum_{i,j=1}^{d}(\delta_{ij}+v_{i}v_{j})\partial_{i}\left(\left(\Pi_{-t_{k},h_{k}^{-1}}g_{k}\right)(\Psi_{I}(v))\right)\partial_{j}\left(N_{k}^{\frac{d-2}{2}}\psi(N_{k}v)\right)(1+|v|^{2})^{-\frac{1}{2}}\mathrm{d}v,\\
		= & \int_{\mathbb{R}^{d}} \sum_{i,j=1}^{d}\left(\delta_{ij}+\frac{v_{i}v_{j}}{N_{k}^{2}}\right)\partial_{i}\phi_{k}^{R}(v)\partial_{j}\psi(v)\left(1+\left|\frac{v}{N_{k}}\right|^{2}\right)^{-\frac{1}{2}}\mathrm{d}v \\
		& - \int_{\mathbb{R}^{d}}\sum_{i,j=1}^{d} \left(\delta_{ij}+\frac{v_{i}v_{j}}{N_{k}^{2}}\right)\partial_{i}\left(\phi_{k}^{R}(v)-N_{k}^{-\frac{d-2}{2}}\left(\Pi_{-t_{k},h_{k}^{-1}}g_{k}\right)\left(\Psi_{I}\left(\frac{v}{N_{k}}\right)\right)\right)\partial_{j}\psi(v)\left(1+\left|\frac{v}{N_{k}}\right|^{2}\right)^{-\frac{1}{2}}\mathrm{d}v \\
		= & I_{1}+I_{2},
	\end{align*}
	it is worth noting that
	\begin{align*}
		\int_{\mathbb{R}^{d}} \sum_{i,j=1}^{d}\left(\delta_{ij}+\frac{v_{i}v_{j}}{N_{k}^{2}}\right)\partial_{i}\phi_{k}^{R}(v)\partial_{j}\psi(v)\left(1+\left|\frac{v}{N_{k}}\right|^{2}\right)^{-\frac{1}{2}}\mathrm{d}v \\
		= \int_{\mathbb{R}^{d}} \sum_{i=1}^{d}\partial_{i}\phi_{k}^{R}(v)\left(\partial_{i}\psi(v)+\frac{v_{i}(v\cdot\nabla\psi)}{N_{k}^{2}}\right)\left(1+\left|\frac{v}{N_{k}}\right|^{2}\right)^{-\frac{1}{2}}\mathrm{d}v.
	\end{align*}
Consequently, the dominated convergence theorem gives, for each $i=1,2,\cdots,d$,
	\begin{align*}
		\sum_{i=1}^{d}\left(\partial_{i}\psi(v)+\frac{v_{i}(v\cdot\nabla\psi)}{N_{k}^{2}}\right)\left(1+\left|\frac{v}{N_{k}}\right|^{2}\right)^{-\frac{1}{2}} - \sum_{i=1}^{d}\partial_{i}\psi(v) \rightarrow 0 \quad \text{in } L^{2}(\mathbb{R}^{d}), \quad k\rightarrow\infty.
	\end{align*}
	Now, for $i=1,2,\cdots,d$, we also have
	\begin{align*}
		\partial_{i}\phi_{k}^{R}(v) \rightharpoonup \partial_{i}\phi^{R}(v) \quad \text{in } L^{2}(\mathbb{R}^{d}), \quad k\to\infty, \quad \forall R>0.
	\end{align*}
	This implies that for any $R>0$, 
	\begin{align}\label{eq1.4v40}
		\int_{\mathbb{R}^{d}}\sum_{i,j=1}^{d} \left(\delta_{ij}+\frac{v_{i}v_{j}}{N_{k}^{2}}\right)\partial_{i}\phi_{k}^{R}(v)\partial_{j}\psi(v)\left(1+\left|\frac{v}{N_{k}}\right|^{2}\right)^{-\frac{1}{2}}\mathrm{d}v \rightarrow \int_{\mathbb{R}^{d}} \nabla\phi^{R}(v)\cdot\nabla\psi(v)\mathrm{d}v, \quad k\rightarrow\infty.
	\end{align}
	Also, observe that
	\begin{align*}
		& \int_{\mathbb{R}^{d}}\sum_{i,j=1}^{d} \left(\delta_{ij}+\frac{v_{i}v_{j}}{N_{k}^{2}}\right)\partial_{i}\left(\phi_{k}^{R}(v)-N_{k}^{-\frac{d-2}{2}}\left(\Pi_{-t_{k},h_{k}^{-1}}g_{k}\right)\left(\Psi_{I}\left(\frac{v}{N_{k}}\right)\right)\right)\partial_{j}\psi(v)\left(1+\left|\frac{v}{N_{k}}\right|^{2}\right)^{-\frac{1}{2}}\mathrm{d}v \\
		= & \int_{\mathbb{R}^{d}}\sum_{i,j=1}^{d} \left(\delta_{ij}+\frac{v_{i}v_{j}}{N_{k}^{2}}\right)\frac{1}{R}(\partial_{i}\eta)\left(\frac{v}{R}\right)N_{k}^{-\frac{d-2}{2}}\left(\Pi_{-t_{k},h_{k}^{-1}}g_{k}\right)\left(\Psi_{I}\left(\frac{v}{N_{k}}\right)\right)\partial_{j}\psi(v)\left(1+\left|\frac{v}{N_{k}}\right|^{2}\right)^{-\frac{1}{2}}\mathrm{d}v \\
		& + \int_{\mathbb{R}^{d}}\sum_{i,j=1}^{d} \left(\delta_{ij}+\frac{v_{i}v_{j}}{N_{k}^{2}}\right)\left(\eta\left(\frac{v}{R}\right)-1\right)\partial_{i}\left(N_{k}^{-\frac{d-2}{2}}\left(\Pi_{-t_{k},h_{k}^{-1}}g_{k}\right)\left(\Psi_{I}\left(\frac{v}{N_{k}}\right)\right)\right)\partial_{j}\psi(v)\left(1+\left|\frac{v}{N_{k}}\right|^{2}\right)^{-\frac{1}{2}}\mathrm{d}v.
	\end{align*}
Since $\psi \in C_{0}^{\infty}(\mathbb{R}^{d})$, we have
	\begin{align*}
		& \left|\int_{\mathbb{R}^{d}}\sum_{i,j=1}^{d} \left(\delta_{ij}+\frac{v_{i}v_{j}}{N_{k}^{2}}\right)\frac{1}{R}(\partial_{i}\eta)\left(\frac{v}{R}\right)N_{k}^{-\frac{d-2}{2}}\left(\Pi_{-t_{k},h_{k}^{-1}}g_{k}\right)\left(\Psi_{I}\left(\frac{v}{N_{k}}\right)\right)\partial_{j}\psi(v)\left(1+\left|\frac{v}{N_{k}}\right|^{2}\right)^{-\frac{1}{2}}\mathrm{d}v\right| \\
		\lesssim & \left(1+\frac{R^{2}}{N_{k}^{2}}\right)^{\frac{1}{2}} \int_{\mathbb{R}^{d}} \left|\frac{1}{R}(\nabla\eta)\left(\frac{v}{R}\right)N_{k}^{-\frac{d-2}{2}}\left(\Pi_{-t_{k},h_{k}^{-1}}g_{k}\right)\left(\Psi_{I}\left(\frac{v}{N_{k}}\right)\right)\nabla\psi(v)\right|\mathrm{d}v \\
		\lesssim & \left(1+\frac{R^{2}}{N_{k}^{2}}\right)^{\frac{1}{2}} \left\|\left(\Pi_{-t_{k},h_{k}^{-1}}g_{k}\right)(\Psi_{I}(v))\right\|_{L^{2^{*}}(|v|\leq 2R)} \|\nabla\psi\|_{L^{2}(|v|\geq R)}\\
		 \lesssim& \left(1+\frac{R^{2}}{N_{k}^{2}}\right)\|\nabla_{g}g_{k}\|_{L^{2}(\mathbb{H}^{d})}\|\nabla\psi\|_{L^{2}(|v|\geq R)},
	\end{align*}
Consequently, since $\psi$ has compact support, we obtain
	\begin{align}\label{eq1.4v41}
		\underset{k \to \infty} \limsup \int_{\mathbb{R}^{d}}\sum_{i,j=1}^{d} \left(\delta_{ij}+\frac{v_{i}v_{j}}{N_{k}^{2}}\right)\frac{1}{R}(\partial_{i}\eta)\left(\frac{v}{R}\right)N_{k}^{-\frac{d-2}{2}}\left(\Pi_{-t_{k},h_{k}^{-1}}g_{k}\right)\left(\Psi_{I}\left(\frac{v}{N_{k}}\right)\right)\partial_{j}\psi(v)\left(1+\left|\frac{v}{N_{k}}\right|^{2}\right)^{-\frac{1}{2}}\mathrm{d}v \to 0,
	\end{align}
	as $R \rightarrow \infty$. Since $\psi \in C_{0}^{\infty}(\mathbb{R}^{d})$, Let $\operatorname{supp}\psi \subset B(0,R_{0})$, $R_{0} > 0$. Then
	\begin{align*}
		& \left|\int_{\mathbb{R}^{d}}\sum_{i,j=1}^{d}\left(\delta_{ij}+\frac{v_{i}v_{j}}{N_{k}^{2}}\right)\left(\eta\left(\frac{v}{R}\right)-1\right)\partial_{i}\left(N_{k}^{-\frac{d-2}{2}}\left(\Pi_{-t_{k},h_{k}^{-1}}g_{k}\right)\left(\Psi_{I}\left(\frac{v}{N_{k}}\right)\right)\right)\partial_{j}\psi(v)\left(1+\left|\frac{v}{N_{k}}\right|^{2}\right)^{-\frac{1}{2}}\mathrm{d}v\right| \\
		\leq & N_{k}^{\frac{d-2}{2}}\int_{\mathbb{R}^{d}}\left(1+|v|^{2}\right)^{\frac{1}{2}}\left|\left(\eta\left(\frac{v}{R}\right)-1\right)\left(\nabla\left(\left(\Pi_{-t_{k},h_{k}^{-1}g_{k}}\right)(\Psi_{I})\right)\right)(v)\nabla(\psi(N_{k}v))\right|\mathrm{d}v \\
		\leq & N_{k}^{\frac{d-2}{2}}\left(1+\frac{R_{0}^{2}}{N_{k}^{2}}\right)^{\frac{1}{2}}\left|\left(\eta\left(\frac{N_{k}v}{R}\right)-1\right)\nabla(\psi(N_{k}v))\right|_{L^{2}(v|\leq\frac{R_{0}}{N_{k}})}\left\|\nabla\left(\left(\Pi_{-t_{k},h_{k}^{-1}}g_{k}\right)(\Psi_{I}\cdot)\right)(v)\right\|_{L^{2}(|v|\leq\frac{R_{0}}{N_{k}})} \\
		\leq & \left(1+\frac{R_{0}^{2}}{N_{k}^{2}}\right)^{\frac{1}{2}}\|\nabla\psi\|_{L^{2}(v)\geq R}\|\nabla_{g}g_{k}\|_{L^{2}(\mathbb{R}^{n})},
	\end{align*}
	 for large enough $k$, we can obtain
	\begin{align}\label{eq1.4v42}
		\lim_{R\to\infty}\lim_{k\to\infty} \int_{\mathbb{R}^{d}}\sum_{i,j=1}^{d} \left(\delta_{ij}+\frac{v_{i}v_{j}}{N_{k}^{2}}\right)\left(\eta\left(\frac{v}{R}\right)-1\right)\partial_{i}\left(N_{k}^{-\frac{d-2}{2}}\left(\Pi_{-t_{k},h_{k}^{-1}}g_{k}\right)\left(\Psi_{I}\left(\frac{v}{N_{k}}\right)\right)\right)\nonumber\\ \times\partial_{j}\psi(v)\left(1+\left|\frac{v}{N_{k}}\right|^{2}\right)^{-\frac{1}{2}}\mathrm{d}v = 0.
	\end{align}
	Thus, by \eqref{pr.4v51}, \eqref{eq1.4v41}, and \eqref{eq1.4v42}, we have
	\begin{align*}
		& \lim_{k\to\infty} \left\langle \nabla_{g}\left(\Pi_{-t_{k},h_{k}^{-1}}g_{k}\right), \nabla_{g}\left(N_{k}^{\frac{d-2}{2}}\psi\left(N_{k}\Psi_{I}^{-1}\cdot\right)\right)\right\rangle_{L^{2}\times L^{2}(\mathbb{H}^{d})} \\
		= & \lim_{k\to\infty} \int_{\mathbb{R}^{d}}\sum_{i,j=1}^{d} \left(\delta_{ij}+\frac{v_{i}v_{j}}{N_{k}^{2}}\right)\partial_{i}\phi_{k}^{R}(v)\partial_{j}\psi(v)\left(1+\left|\frac{v}{N_{k}}\right|^{2}\right)^{-\frac{1}{2}}\mathrm{d}v \\
		& + \lim_{k\to\infty} \int_{\mathbb{R}^{d}}\sum_{i,j=1}^{d} \left(\delta_{ij}+\frac{v_{i}v_{j}}{N_{k}^{2}}\right)\partial_{i}\left(N_{k}^{-\frac{d-2}{2}}\left(\Pi_{-t_{k},h_{k}^{-1}}g_{k}\right)\left(\Psi_{I}\left(\frac{v}{N_{k}}\right)\right)-\phi_{k}^{R}(v)\right)\partial_{j}\psi(v)\left(1+\left|\frac{v}{N_{k}}\right|^{2}\right)^{-\frac{1}{2}}\mathrm{d}v \\
		= & \lim_{R\to\infty}\lim_{k\to\infty} \int_{\mathbb{R}^{d}}\sum_{i,j=1}^{d} \left(\delta_{ij}+\frac{v_{i}v_{j}}{N_{k}^{2}}\right)\partial_{i}\phi_{k}^{R}(v)\partial_{j}\psi(v)\left(1+\left|\frac{v}{N_{k}}\right|^{2}\right)^{-\frac{1}{2}}\mathrm{d}v \\
		& + \lim_{R\to\infty}\lim_{k\to\infty} \int_{\mathbb{R}^{d}}\sum_{i,j=1}^{d} \left(\delta_{ij}+\frac{v_{i}v_{j}}{N_{k}^{2}}\right)\partial_{i}\left(N_{k}^{-\frac{d-2}{2}}\left(\Pi_{-t_{k},h_{k}^{-1}}g_{k}\right)\left(\Psi_{I}\left(\frac{v}{N_{k}}\right)\right)-\phi_{k}^{R}(v)\right)\partial_{j}\psi(v)\left(1+\left|\frac{v}{N_{k}}\right|^{2}\right)^{-\frac{1}{2}} \\
		= & \lim_{R\to\infty} \int_{\mathbb{R}^{d}} \nabla\phi^{R}(v)\cdot\nabla\psi(v)\mathrm{d}v = \int_{\mathbb{R}^{d}} \nabla\phi(v)\cdot\nabla\psi(v)\mathrm{d}v.
	\end{align*}
Therefore, $(g'_k)_k$ lies outside $\mathcal{O}'$. Now, we prove \eqref{le.4v71}. Applying \eqref{eq1.4v26} yields
	\begin{align*}
		\frac{\delta}{2} \leq& \limsup_{k\to\infty} \left|\left\langle \Pi_{-t_{k},h_{k}^{-1}}g_{k}, N_{k}^{-\frac{d-2}{2}}P_{N_{k}}\delta_{\mathbf{0}}\right\rangle_{L^{2}\times L^{2}(\mathbb{H}^{d})}\right|\\ = &\limsup_{k\to\infty} \left|\left\langle \nabla_{g}\Pi_{-t_{k},h_{k}^{-1}}g_{k}, \nabla_{g}\left(N_{k}^{-\frac{d+2}{2}}e^{N_{k}^{-2}\Delta_{g}}\delta_{\mathbf{0}}\right)\right\rangle_{L^{2}\times L^{2}(\mathbb{H}^{d})}\right|.
	\end{align*}
	It follows from \eqref{eq1.4v27} and \eqref{eq1.4v28} that
	\begin{align*}
		\frac{\delta}{2} \leq &\limsup_{k\to\infty} \left|\left\langle \nabla_{g}\Pi_{-t_{k},h_{k}^{-1}}g_{k}, \nabla_{g}\left(N_{k}^{-\frac{d+2}{2}}e^{N_{k}^{-2}}\delta_{\mathbf{0}}\right)\right\rangle_{L^{2}\times L^{2}(\mathbb{H}^{d})}\right|\\
		= &\limsup_{k\to\infty} \left|\left\langle \nabla_{g}\tilde{\phi}_{\mathcal{O}_{k}'}, \nabla_{g}\tilde{f}_{\mathcal{O}_{k}'}\right\rangle_{L^{2}\times L^{2}(\mathbb{H}^{d})}\right| \lesssim \limsup_{k\to\infty} \left\| \nabla_{g}\tilde{\phi}_{\mathcal{O}_{k}'}\right\|_{L^{2}(\mathbb{H}^{d})},
	\end{align*}
	then
	\begin{align*}
		\limsup_{k\to\infty} \left\| \nabla_{g}\tilde{\phi}_{\mathcal{O}_{k}'}\right\|_{L^{2}(\mathbb{H}^{d})} \gtrsim \delta.
	\end{align*}
	Finally, given that $\mathcal{O}' \perp \mathcal{O}^\alpha$, $\alpha \leq A$, the sequence $\left(\tilde{\phi}_{\mathcal{O}'_k}\right)_k$ lies outside each $\mathcal{O}^\alpha$ for $\alpha \leq A$. Because $(g_k)_k$ is also absent from each $\mathcal{O}^\alpha$ for $\alpha \leq A$, we conclude that $(g_k')_k$ is absent from each $\mathcal{O}^\alpha$ for $\alpha \leq A$. This completes the proof of the lemma.
\end{proof}
	\begin{lemma}\label{le.4v6}
	Suppose that $(f_k)_{k\geq1}$ be a bounded sequence in $H^1(\mathbb{H}^d)$ and let $\delta\in(0,\delta_0]$ be sufficiently small. After passing to a subsequence, the sequence $(f_k)_{k\geq1}$ admits a decomposition into $2J+1=O(\delta^{-2})$ terms:
	\begin{align*}
		f _ { k } = \sum _ { 1 \leq \mu \leq J } \tilde { \phi } _ { \mathcal { O } ^\mu_ { k } } ^{\mu} + \sum _ { 1 \leq \nu \leq J } \tilde { \psi } _ { \tilde{ \mathcal { O } }^\nu_ { k } } ^{\nu} + r _ { k } ,
	\end{align*}
	where $\tilde{\phi}_{\mathcal{O}_k}^\mu$ and $\tilde{\psi}_{\mathcal{O}_k}^\nu$ denote the Euclidean and hyperbolic profiles associated with the sequences $(\phi^\mu, \mathcal{O}^\mu) \in H^1(\mathbb{R}^d) \times \mathcal{F}_e$ and $(\psi^\nu, \tilde{\mathcal{O}}^\nu) \in H^1(\mathbb{H}^d) \times \mathcal{F}_h$ as in Definition \ref{def.4v1}.
	Furthermore, the remainder $r_k$ is orthogonal to every frame $\mathcal{O}^\mu, \tilde{\mathcal{O}}^\nu$ for $1 \leq \mu, \nu \leq J$, and obeys
	\begin{align*}
		\limsup _ { k \to \infty } \sup_{\substack{N\ge1\\ t\in\mathbb{R}\\ x\in\mathbb{H}^d}} N ^{- \frac { d - 2} { 2 } } \big | e ^{i t \Delta _ { g } } P _ { N } r _ { k } \big | ( x ) \leq \delta .
	\end{align*}
	In addition, the families $\mathcal{O}^{\mu}$ and $\tilde{\mathcal{O}}^{\nu}$ are mutually orthogonal, and the decomposition satisfies the following asymptotic orthogonality:
	\begin{align}\label{le.4v61}
		\| \nabla _ { g } f _ { k } \| _ { L ^{2} } ^{2} = \sum _ { 1 \leq \mu \leq J } \| \nabla _ { g } \tilde { \phi } _ { \mathcal { O }^{\mu} _ { k } } ^{\mu} \| _ { L ^{2} } ^{2} + \sum _ { 1 \leq \nu \leq J } \| \nabla _ { g } \tilde { \psi } _ {\tilde{ \mathcal { O } }^{\nu} _ { k } } ^{\nu} \| _ { L ^{2} } ^{2} + \| \nabla _ { g } r _ { k } \| _ { L ^{2} } ^{2} + o _ { k } ( 1 ) ,
	\end{align}
	where $o_{k}(1) \rightarrow 0$ as $k \rightarrow \infty$.
\end{lemma}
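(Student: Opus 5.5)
The plan is to obtain Lemma \ref{le.4v6} by iterating Lemma \ref{le.4v7}, as sketched just before that lemma. Set $f_k^0=f_k$. If $\delta((f_k^0)_k)\le\delta$ there is nothing to prove: take $J=0$ and $r_k=f_k$. Otherwise apply Lemma \ref{le.4v7} with the empty family of frames. This produces, after passing to a subsequence, a frame $\mathcal{O}^1$ (Euclidean or hyperbolic) and a profile $\tilde\phi^1_{\mathcal{O}^1_k}$ with $\lim_k\|\nabla_g\tilde\phi^1_{\mathcal{O}^1_k}\|_{L^2}\ge\delta$, such that $f_k^1:=f_k^0-\tilde\phi^1_{\mathcal{O}^1_k}$ is absent from $\mathcal{O}^1$.

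Inductively, suppose $f_k^\alpha$ is absent from $\mathcal{O}^1,\dots,\mathcal{O}^\alpha$ and $\delta((f^\alpha_k)_k)>\delta$. Lemma \ref{le.4v7} then gives a new frame $\mathcal{O}^{\alpha+1}$ orthogonal to all previous frames, and $f_k^{\alpha+1}=f_k^\alpha-\tilde\phi^{\alpha+1}_{\mathcal{O}^{\alpha+1}_k}$ is absent from $\mathcal{O}^1,\dots,\mathcal{O}^{\alpha+1}$. A diagonal subsequence is taken at each stage. Afterwards the extracted profiles are relabelled into the Euclidean family $\{\phi^\mu,\mathcal{O}^\mu\}$ and the hyperbolic family $\{\psi^\nu,\tilde{\mathcal{O}}^\nu\}$. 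Padding with zero profiles makes both counts equal to a common $J$, which gives the $2J+1$ terms.

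The key quantitative step is the energy decoupling at each stage. Write
\[
\|\nabla_g f^{\alpha}_k\|^2_{L^2}=\|\nabla_g f^{\alpha+1}_k\|^2_{L^2}+\|\nabla_g\tilde\phi^{\alpha+1}_{\mathcal{O}^{\alpha+1}_k}\|^2_{L^2}+2\Re\langle \nabla_g f^{\alpha+1}_k,\nabla_g\tilde\phi^{\alpha+1}_{\mathcal{O}^{\alpha+1}_k}\rangle .
\]
The cross term is $o_k(1)$ because $f^{\alpha+1}_k$ is absent from $\mathcal{O}^{\alpha+1}$ in the sense of \eqref{def.4v21}. There is a point of care here: absence is stated with the $H^1$ pairing, while the decoupling is in $\dot H^1$. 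On $\mathbb{H}^d$, however, $\|\nabla_g\cdot\|_{L^2}$ is an equivalent norm on $H^1$ (the spectrum is bounded below by $\rho^2$), so the inner product can be taken to be the $\nabla_g$ one. Summing over $\alpha\le A$ yields
\[
\|\nabla_g f_k\|^2_{L^2}=\sum_{\alpha\le A}\|\nabla_g\tilde\phi^\alpha_{\mathcal{O}^\alpha_k}\|^2_{L^2}+\|\nabla_g f_k^A\|^2_{L^2}+o_k(1),
\]
which is \eqref{le.4v61} once $r_k=f_k^A$. Since each profile contributes at least $\delta^2$ in the limit and $\sup_k\|f_k\|_{H^1}<\infty$, the process must stop after $A\lesssim\delta^{-2}\sup_k\|\nabla_g f_k\|^2_{L^2}=O(\delta^{-2})$ steps.

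At termination $\delta((r_k)_k)\le\delta$. Because $\pi_h$ commutes with $e^{it\Delta_g}$ and $P_N$ and $\mathbb{G}$ acts transitively, the supremum over $h\cdot\mathbf{0}$ in \eqref{eq1.4v24} equals the supremum over $x\in\mathbb{H}^d$, giving the stated bound on the remainder. Mutual orthogonality of all frames holds by construction.

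The main obstacle is the bookkeeping of subsequences together with the limsup in $\delta(\cdot)$. Lemma \ref{le.4v7} requires $\delta((g_k)_k)\ge\delta$ along the current subsequence, and this can change when we pass to a further subsequence. To handle this, I would define $\delta(\cdot)$ as a $\limsup$ and stop as soon as the $\limsup$ along the current subsequence drops to $\le\delta$; later extractions only decrease it. A second point is that the absence of $r_k$ from earlier frames survives later subtractions. This uses that profiles on orthogonal frames are asymptotically orthogonal, Lemma \ref{le.4v4}(ii), which is already built into Lemma \ref{le.4v7}.
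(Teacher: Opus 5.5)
Your proposal is correct and matches the paper's argument. Like the paper, you iterate Lemma \ref{le.4v7} from $f_k^0=f_k$, use absence from the newly extracted frame to kill the cross term, telescope to obtain \eqref{le.4v61}, and stop after at most $\delta^{-2}\sup_k\|\nabla_g f_k\|_{L^2}^2$ steps because each profile carries energy at least of order $\delta^2$. Your extra bookkeeping only makes explicit what the paper leaves implicit: finitely many nested subsequences with $\delta(\cdot)$ read as a $\limsup$, the fact that the paper's $H^1$ pairing is exactly $\langle\nabla_g\cdot,\nabla_g\cdot\rangle$, transitivity to pass from points $h\cdot\mathbf{0}$ to arbitrary $x$, and relabelling and padding the profiles into the two families.
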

	\begin{proof}[Proof of proposition \ref{pr.4v5}]
			The proof of the profile decomposition in Proposition \ref{pr.4v5} relies on the finite decomposition in Lemma \ref{le.4v6} in combination with the inequality $\|e^{it\Delta_{g}}f\|_{L_{t}^{\infty}L_{x}^{\frac{2d}{d-2}}} \leq \|f\|_{H^1}$ (as in \eqref{eq1.2v1}) and the interpolation inequality, which is used to obtain the necessary bounds.
		\end{proof}

	
\section{Proof of the main Theorem}\label{sec:ProofMainThm}
	
In this section, we first prove Proposition \ref{pr.2v8}. And then, we utilize this proposition to show
Theorem \ref{th1v1}.

	\begin{proof}[Proof of Proposition \ref{pr.2v8}]
By time-translation invariance, after replacing $u_k(t)$ with $u_k(t+t_k)$, we may assume $t_k=0$ for all $k\geq 1$. Let us apply Proposition \ref{pr.4v5} to the sequence $\{u_k(0)\}_k$, which is bounded in $H^1(\mathbb{H}^d)$. As a result, there exist sequences of pairs $(\phi^\mu,\mathcal{O}^\mu)\in\dot{H}^1(\mathbb{R}^d)\times\mathcal{F}_e$ and $(\psi^\nu,\tilde{\mathcal{O}}^\nu)\in H^1(\mathbb{H}^d)\times\mathcal{F}_h$, $\mu,\nu=1,2,\cdots$, for which the conclusion of Proposition \ref{pr.4v5} holds. Following Proposition 3.4 of \cite{Ionescu2012}, we give a sketch proof that
		\begin{align*}
			\lim_{J\to\infty} \lim_{k\to\infty} \left\| \nabla_g \left( F(U_k^{\mathrm{app}}) - \sum_{1\leq\gamma\leq 2J} F(U_k^{\gamma}) \right) \right\|_{N^0(\mathbb{R})} = 0,
		\end{align*}
where $U_k^{\mathrm{app}}=\sum_{1\leq \mu\leq J} U_{e,k}^{\mu}+
\sum_{1\leq \nu\leq J} U_{h,k}^{\nu}+e^{it\Delta_g}r_k^J$ is a global approximate solution with bounded $Z$ norm for all $k$ sufficiently large, and the associated nonlinear profile $U_{e,k}^{\mu}$ (resp. $U_{h,k}^{\nu}$) is defined as the maximal solution of \eqref{eq1.1v1} with initial data $U_{e,k}^{\mu}(0)=\tilde{\phi}_{\mathcal{O}_k}^{\mu}$ (resp. $U_{h,k}^{\nu}(0)=\tilde{\psi}_{\tilde{\mathcal{O}}_k}^{\nu}$), associated with each linear profile $\tilde{\phi}_{\mathcal{O}_k}^{\mu}$ (resp. $\tilde{\psi}_{\tilde{\mathcal{O}}_k}^{\nu}$). To begin, choose an arbitrarily small $\epsilon_0 > 0$. For a given $J$, we will define
		\begin{align*}
			U_{\mathrm{prof},k}^{J} = \sum_{1\leq\mu\leq J} U_{e,k}^{\mu} + \sum_{1\leq\nu\leq J} U_{h,k}^{\nu} = \sum_{1\leq\gamma\leq 2J} U_k^{\gamma}
		\end{align*}
		as the sum of the respective profiles, where $U_k^\gamma$ is the permutation of $U_{e,k}^{\mu}$ and $U_{h,k}^{\nu}$. When no distinction between Euclidean and hyperbolic profiles is needed, we simply denote either type of profile by $U_k^\gamma$. Next, we will separate
		\begin{align}\label{eq1.5v8}
			& \left\| \nabla_g \left( F(U_k^{\mathrm{app}}) - \sum_{1\leq\gamma\leq 2J} F(U_k^{\gamma}) \right) \right\|_{N^0(\mathbb{R})} \nonumber\\
			\leq& \left\| \nabla_g \left( F(U_{\mathrm{prof},k}^{J}) - \sum_{1\leq\gamma\leq 2J} F(U_k^{\gamma}) \right) \right\|_{N^0(\mathbb{R})} + \left\| \nabla_g \left( F(U_k^{\mathrm{app}}) - F(U_{\mathrm{prof},k}^{J}) \right) \right\|_{N^0(\mathbb{R})}.
		\end{align}
	Now, let us consider the first term. If $d \geq 6$, then we obtain
		\begin{align*}
			\left| \nabla_g \left( \sum_{1\leq\gamma\leq 2J} F(U_k^{\gamma}) - F(U_{\mathrm{prof},k}^{J}) \right) \right| \lesssim_{J} \sum_{\gamma\neq\tilde{\gamma}} |\nabla_g U_k^{\gamma}| \, |U_k^{\tilde{\gamma}}|^{\frac{4}{d-2}}.
		\end{align*}
		Therefore, by Lemma \ref{le.5v3}, we find that
		\begin{align*}
			 \left\| \nabla_g \left( F(U_{\mathrm{prof},k}^{J}) - \sum_{1\leq\gamma\leq 2J} F(U_k^{\gamma}) \right) \right\|_{N^0(\mathbb{R})} & \lesssim_{J} \sum_{\gamma\neq\tilde{\gamma}} \left\| \nabla_g U_k^{\gamma} |U_k^{\tilde{\gamma}}|^{\frac{4}{d-2}} \right\|_{L_{t,x}^{\frac{2(d+2)}{d+4}}} \\
			&\lesssim_{J} \sum_{\gamma\neq\tilde{\gamma}} \left\| \nabla_g U_k^{\gamma} U_k^{\tilde{\gamma}} \right\|_{L_{t,x}^{\frac{d+2}{d+1}}} \left\| \nabla_g U_k^{\gamma} \right\|_{L_{t,x}^{\frac{2(d+2)}{d}}}^{\frac{d-6}{d-2}}\\
			& \lesssim_{J, E_{\max},\eta} o(1), \quad \text{as } k\to\infty.
		\end{align*}
		If $d=4,5$, we obtain
		\begin{align*}
			\left| \nabla_g \left( \sum_{1\leq\gamma\leq 2J} F(U_k^{\gamma}) - F(U_{\mathrm{prof},k}^{J}) \right) \right| \lesssim_{J} \sum_{\gamma\neq\tilde{\gamma}} |\nabla_g U_k^{\gamma}| |U_k^{\tilde{\gamma}}| \left( \left| \sum_{1\leq\beta\leq 2J} U_k^{\beta} \right|^{\frac{6-d}{d-2}} + |U_k^{\gamma}|^{\frac{6-d}{d-2}} \right),
		\end{align*}
and therefore, applying Lemma \ref{le.5v3} gives the conclusion
\begin{align*}
& \left\| \sum_{1\leq\gamma\leq 2J} F(U_k^{\gamma}) - F(U_{\mathrm{prof},k}^{J}) \right\|_{L_{t,x}^{\frac{2(d+2)}{d+4}}} \\
 \lesssim&_{J} \sum_{\gamma\neq\tilde{\gamma}} \left\| \nabla_g U_k^{\gamma} U_k^{\tilde{\gamma}} \right\|_{L_{t,x}^{\frac{d+2}{d-1}}} \sum_{1\leq\beta\leq 2J} \left\| U_k^{\beta} \right\|_{Z(\mathbb{R}_+)}^{\frac{6-d}{d-2}} \\
\lesssim&_{J, E_{\max},\eta} o(1), \quad \text{as } k\to\infty.
		\end{align*}
		Therefore, based on the discussion, the estimates above allow us to conclude that
		\begin{align*}
			\lim_{J\to\infty} \lim_{k\to\infty} \left\| \nabla_g \left( \sum_{1\leq\gamma\leq 2J} F(U_k^{\gamma}) - F(U_{\mathrm{prof},k}^{J}) \right) \right\|_{L_{t,x}^{\frac{2(d+2)}{d+4}}} = 0.
		\end{align*}
		Regarding the first term in \eqref{eq1.5v8}, and in the case $d \geq 4$, we will now prove the corresponding results
		\begin{align}\label{eq1.5v9}
			\lim_{J\to\infty} \lim_{k\to\infty} \left\| \nabla_g \left( F(U_{\mathrm{prof},k}^{J}) - F(U_k^{\mathrm{app}}) \right) \right\|_{L_{t,x}^{\frac{2(d+2)}{d+4}}} = 0.
		\end{align}
		It is worth noting that
\begin{align*}
& \left\| \nabla_g (F(U_{\mathrm{prof},k}^{J}) - F(U_k^{\mathrm{app}}) ) \right\|_{L_{t,x}^{\frac{2(d+2)}{d+4}}} \\
\lesssim& \left\| \nabla_g e^{it\Delta_g} r_k^{J} \right\|_{L_{t,x}^{\frac{2(d+2)}{d}}} \left\| e^{it\Delta_g} r_k^{J} \right\|_{Z(\mathbb{R})}^{\frac{4}{d-2}} + \left\| \nabla_g U_k^{\mathrm{app}} \right\|_{L_{t,x}^{\frac{2(d+2)}{d}}} \left\| e^{it\Delta_g} r_k^{J} \right\|_{Z(\mathbb{R})}^{\frac{4}{d-2}} \\
&+ \left\| |U_k^{\mathrm{app}}|^{\frac{4}{d-2}} \nabla_g e^{it\Delta_g} r_k^{J} \right\|_{L_{t,x}^{\frac{2(d+2)}{d+4}}} + \left\| \nabla_g U_k^{\mathrm{app}} \right\|_{L_{t,x}^{\frac{2(d+2)}{d}}} \left\| e^{it\Delta_g} r_k^{J} \right\|_{Z(\mathbb{R})} \left\| U_k^{\mathrm{app}} \right\|_{Z(\mathbb{R})}^{\frac{6-d}{d-2}}.
		\end{align*}
	If $d \geq 6$, the last term disappears, which simplifies the overall expression.
		
		Applying \eqref{pr.4v51},  together with the fact $r_k^{J}$ is bounded in $H^1(\mathbb{H}^d)$, we obtain that \eqref{eq1.5v9} follows as soon as we can prove
		\begin{align}\label{eq1.5v10}
			\lim_{J\to\infty} \lim_{k\to\infty} \left\| |U_k^{\mathrm{app}}|^{\frac{4}{d-2}} \nabla_g e^{it\Delta_g} r_k^{J} \right\|_{L_{t,x}^{\frac{2(d+2)}{d+4}}(\mathbb{R}\times\mathbb{H}^d)}= 0.
		\end{align}
		Combining Hölder's inequality and the Strichartz inequality gives
\begin{align*}
& \left\| |U_k^{\mathrm{app}}|^{\frac{4}{d-2}} \nabla_g e^{it\Delta_g} r_k^{J} \right\|_{L_{t,x}^{\frac{2(d+2)}{d+4}}} \\
\lesssim& \left\| U_k^{\mathrm{app}} \right\|_{Z(\mathbb{R})}^{\frac{3}{d-2}} \left\| \nabla_g e^{it\Delta_g} r_k^{J} \right\|_{L_{t,x}^{\frac{2(d+2)}{d}}}^{\frac{d-3}{d-2}} \left\| U_k^{\mathrm{app}} \nabla_g e^{it\Delta_g} r_k^{J} \right\|_{L_{t,x}^{\frac{d+2}{d-1}}}^{\frac{1}{d-2}} \\
 \lesssim& \left\| \sum_{1\leq\gamma\leq 2J} U_k^{\gamma} \nabla_g e^{it\Delta_g} r_k^{J} \right\|_{L_{t,x}^{\frac{d+2}{d-1}}}^{\frac{1}{d-2}} + \left\| e^{it\Delta_g} r_k^{J} \right\|_{Z(\mathbb{R})}^{\frac{1}{d-2}} \left\| \nabla_g e^{it\Delta_g} r_k^{J} \right\|_{L_{t,x}^{\frac{2(d+2)}{d}}}^{\frac{1}{d-2}} \\
 \lesssim& \left\| e^{it\Delta_g} r_k^{J} \right\|_{Z(\mathbb{R})}^{\frac{1}{d-2}} + \left\| \sum_{1\leq\gamma\leq 2J} U_k^{\gamma} \nabla_g e^{it\Delta_g} r_k^{J} \right\|_{L_{t,x}^{\frac{d+2}{d-1}}}^{\frac{1}{d-2}}.
		\end{align*}
		According to \eqref{pr.4v51}, the first term contributes adequately to \eqref{eq1.5v10}. Thus, the proof of \eqref{eq1.5v10} is reduced to showing that for each 	
		$1 \leq \gamma \leq 2J$, it holds that
		\begin{align}\label{eq1.5v11}
			\lim_{J\to\infty} \limsup_{k\to\infty}  \left\| U_k^{\gamma} \nabla_g e^{it\Delta_g} r_k^{J} \right\|_{L_{t,x}^{\frac{d+2}{d-1}}} = 0.
		\end{align}
		Let $\epsilon > 0$ be fixed. For each $U_k^{\gamma}$ defined below, we now define the following sets
		\begin{align*}
			S_{N_k^{\gamma},h_k^{\gamma}}^{R} = \{ (t,x) \in \mathbb{R} \times \mathbb{H}^{d} ,\; t \in [ -N_k^{-2}R^{2}, N_k^{-2}R^{2} ],\; d_g(h_k^{-1}x,\mathbf{0}) \leq N_k^{-1}R \}.
		\end{align*}
		When $R$ is chosen sufficiently large, it follows from \eqref{eq1.5v19} that
\begin{align*}
\left\| U_k^{\gamma} \nabla_g e^{it\Delta_g} r_k^{J} \right\|_{L_{t,x}^{\frac{d+2}{d-1}}\bigl( (\mathbb{R}\times\mathbb{H}^{d})\setminus S_{N_k^{\gamma},h_k^{\gamma}}^{R} \bigr)} 
 \leq \left\| U_k^{\gamma} \right\|_{L_{t,x}^{\frac{2(d+2)}{d-2}}\bigl( (\mathbb{R}\times\mathbb{H}^{d})\setminus S_{N_k^{\gamma},h_k^{\gamma}}^{R} \bigr)} \left\| \nabla_g e^{it\Delta_g} r_k^{J} \right\|_{L_{t,x}^{\frac{2(d+2)}{d}}}^{\frac{2(d+2)}{d}} \lesssim_{E_{\max},\eta} \epsilon.
		\end{align*}
		When dealing with a hyperbolic profile $U_{h,k}^{\nu}$, recall that $W^{\nu}$ satisfies $W^{\nu} \in Z(\mathbb{R})$. If we take $W^{\nu,1} \in C_{0}^{\infty}(\mathbb{R} \times \mathbb{H}^{d})$, it follows that	
		\begin{align*}
			\left\| W^{\nu} - W^{\nu,1} \right\|_{Z(\mathbb{R})} \leq \epsilon .
		\end{align*}
As a consequence of $\|U_{h, k}^\nu (t) - \Pi_{h_k}W^\nu(t - t_k^\nu)\|_{\mathcal{S}^1} \to 0$ as $k \to \infty$, there exists a constant $C_{\nu,\epsilon}$ such that
\begin{align*}
 & \left\| U_{h,k}^{\nu} \nabla_g e^{it\Delta_g} r_k^{J} \right\|_{L_{t,x}^{\frac{d+2}{d-1}}(S_{N_k^{\nu},h_k^{\nu}}^{R})} \\
 \leq& \left\| (U_{h,k}^{\nu} - \pi_{h_k^{\nu}} W^{\nu,1}) \nabla_g e^{it\Delta_g} r_k^{J} \right\|_{L_{t,x}^{\frac{d+2}{d-1}}(S_{N_k^{\nu},h_k^{\nu}}^{R})} + \left\| W^{\nu,1} \right\|_{L_{t,x}^{\frac{2(d+2)}{d-4}}} \left\| \nabla_g e^{it\Delta_g} r_k^{J} \right\|_{L_{t,x}^{2}(S_{N_k^{\nu},h_k^{\nu}}^{R})} \\
\lesssim&_{E_{\max},\eta} \epsilon + C_{\nu,\epsilon} \left\| \nabla_g e^{it\Delta_g} r_k^{J} \right\|_{L_{t,x}^{2}(S_{N_k^{\nu},h_k^{\nu}}^{R})},
		\end{align*}
		where $\left\| W^{\nu,1} \right\|_{L_{t,x}^{\frac{2(d+2)}{d-4}}}$ becomes $\left\| W^{\nu,1} \right\|_{L_{t,x}^{\infty}}$ for $d=4$.
	Next, we consider a Euclidean profile, choosing $v \in C_{0}^{\infty}(\mathbb{R} \times \mathbb{R}^{d})$ satisfying
	\begin{align*}
		\| v' - v \|_{Z(\mathbb{R})} \leq \epsilon,
	\end{align*}
	where $v'$ is the function from Lemma \ref{le.3v3}. We then repeat the estimation procedure as follows:
	\begin{align*}
		\left\| U_{e,k}^{\mu} \nabla_g e^{it\Delta_g} r_k^{J} \right\|_{L_{t,x}^{\frac{d+2}{d-1}}(S_{N_k^{\mu},h_k^{\mu}}^{R})} 
		\lesssim&_{E_{\max},\eta} \epsilon + \left\| \tilde{u}_k \nabla_g e^{it\Delta_g} r_k^{J} \right\|_{L_{t,x}^{\frac{d+2}{d-1}}(S_{N_k^{\mu},h_k^{\mu}}^{R})} \\ 
		\lesssim&_{E_{\max},\eta} \epsilon + \left\| \tilde{u}_k \right\|_{L_{t,x}^{\frac{2(d+2)}{d-4}}(S_{N_k^{\mu},h_k^{\mu}}^{R})} \left\| \nabla_g e^{it\Delta_g} r_k^{J} \right\|_{L_{t,x}^{2}(S_{N_k^{\mu},h_k^{\mu}}^{R})} \\ 
		\lesssim&_{E_{\max},\eta} \epsilon + C_{\mu,\epsilon} N_k \left\| \nabla_g e^{it\Delta_g} r_k^{J} \right\|_{L_{t,x}^{2}(S_{N_k^{\mu},h_k^{\mu}}^{R})},
	\end{align*}	
	where we use $\left\| \tilde{u}_k \right\|_{L_{t,x}^\infty}$ for $d=4$ instead of $\left\| \tilde{u}_k \right\|_{L_{t,x}^{\frac{2(d+2)}{d-4}}}$.
	Thus, we conclude for all cases, 	
	\begin{align*}
		\left\| U_k^{\gamma} \nabla_g e^{it\Delta_g} r_k^{J} \right\|_{L_{t,x}^{\frac{d+2}{d-1}}(S_{N_k^{\gamma},h_k^{\gamma}}^{R})} \lesssim_{E_{\max},\eta} \epsilon + C_{\alpha,\epsilon} N_k^{\gamma} \left\| \nabla_g e^{it\Delta_g} r_k^{J} \right\|_{L_{t,x}^{2}(S_{N_k^{\gamma},h_k^{\gamma}}^{R})}.
	\end{align*}	
	Finally, using both Lemma \ref{le.2v4} and \eqref{pr.4v51}, we obtain	
	\begin{align*}
		\underset{k \to \infty} \limsup \left\| U_k^{\gamma} \nabla_g e^{t\Delta_g} r_k^{J} \right\|_{L_{t,x}^{\frac{d+2}{d-1}}(S_{N_k^{\gamma},h_k^{\gamma}}^{R})} \lesssim_{E_{\max},\eta} \epsilon.
	\end{align*}	
	Because $\epsilon > 0$ can be chosen arbitrarily, \eqref{eq1.5v11} holds. Therefore, the proof of \eqref{eq1.5v10} is complete, and hence \eqref{eq1.5v9} is proved when $d \geq 4$.
	
	As $u_k(0)=U_k^{\mathrm{app}}(0)$, it follows from Sobolev's inequality and the conclusion of Proposition \ref{pr.2v6} that for all $k$ sufficiently large and for every interval $J$,	
	\begin{align*}
		\| u_k \|_{Z(J)} \lesssim \| u_k \|_{S^1(J)} \lesssim \| u_k - U_k^{\mathrm{app}} \|_{S^1(J)} + \| U_k^{\mathrm{app}} \|_{S^1(\mathbb{R})} \lesssim_{E_{\max},\eta} 1.
	\end{align*}
	As a result, $u_k$ is global for all $k$ large enough and has a uniformly bounded $Z(\mathbb{R})$-norm, which is in contradiction with Proposition \ref{pr.2v81}. This completes the proof.
	\end{proof}
	
\begin{lemma}[Linear evolution criterion]\label{le.5v1}
Let $M > 0$ be arbitrary. There exists $\delta > 0$ such that for any interval $J \subset \mathbb{R}$, if 
\begin{align*} 
\| \nabla_g \phi \|_{L^2(\mathbb{H}^d)} \leq M,\qquad \| e^{it\Delta_g} \phi \|_{Z(J)} \leq \delta, 
\end{align*} 
then, given any $t_0 \in J$, the maximal solution $(I,u)$ of \eqref{eq1.1v1} with initial condition $u(t_0) = e^{it_0\Delta_g}\phi$ obeys $J \subset I$ and the following inequalities hold
	\begin{align}\label{eq1.5v12} 
		\| u - e^{it\Delta_g} \phi \|_{S^1(J)} \leq \delta^{\frac{d}{d-2}},\qquad \| u \|_{S^1(J)} \leq C(M,\delta). 
	\end{align} 
	Similarly, when $J=(-\infty,T)$, there also exists a unique maximal solution $(I,u)$ with $J\subset I$ such that
	\begin{align*} 
		\lim_{t\to -\infty} \| \nabla_g ( u(t) - e^{it\Delta_g} \phi ) \|_{L^2(\mathbb{H}^d)} = 0,
	\end{align*} 
	and \eqref{eq1.5v12} is also satisfied in this case. Similar results hold in the Euclidean case, with $(\mathbb{R}^d,\delta_{ij})$ taking the place of $(\mathbb{H}^d,g)$. 
\end{lemma}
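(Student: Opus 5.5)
We will prove Lemma \ref{le.5v1} by applying the stability result, Proposition \ref{pr.2v6} with $\rho=1$, taking the free evolution itself as the approximate solution. Set $\tilde u(t)=e^{it\Delta_g}\phi$ on $J$. Then $\tilde u$ solves
\begin{align*}
i\partial_t\tilde u+\Delta_g\tilde u=|\tilde u|^{\frac{4}{d-2}}\tilde u+e,\qquad e:=-|\tilde u|^{\frac{4}{d-2}}\tilde u,
\end{align*}
and $\tilde u(t_0)=u(t_0)$ exactly, so the initial discrepancy in \eqref{pr.2v62} vanishes. By the Strichartz estimates and the equivalence $H^1\simeq\dot H^1$ on $\mathbb{H}^d$ (spectral gap $\rho^2>0$), we have $\sup_t\|\tilde u(t)\|_{H^1}+\|\tilde u\|_{S^1(J)}\lesssim M$. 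Together with $\|\tilde u\|_{Z(J)}\le\delta$, this verifies \eqref{pr.2v61} with a constant depending only on $M$.

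The key step is to estimate $\|e\|_{N^1(J)}$. We place $\nabla_g e$ in the dual space $L^{\frac{2(d+2)}{d+4}}_{t,x}$ and use the chain rule and H\"older:
\begin{align*}
\|\nabla_g(|\tilde u|^{\frac{4}{d-2}}\tilde u)\|_{L^{\frac{2(d+2)}{d+4}}_{t,x}(J)}\lesssim\|\tilde u\|_{Z(J)}^{\frac{4}{d-2}}\|\nabla_g\tilde u\|_{L^{\frac{2(d+2)}{d}}_{t,x}(J)}\lesssim\delta^{\frac{4}{d-2}}M.
\end{align*}
The $L^2$-level part of the $N^1$ norm is treated the same way. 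We then choose $\delta$ so small that $\delta^{\frac{4}{d-2}}M<\epsilon_1(CM)$. Proposition \ref{pr.2v6} then produces a solution on $J$ agreeing with $u$ at $t_0$. By the uniqueness in Proposition \ref{pr.2v5}, it coincides with the maximal solution, so $J\subset I$ and $\|u\|_{S^1(J)}\le C(M)$.

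\textbf{Main obstacle.} Stability only gives $\|u-\tilde u\|_{S^1}\lesssim C(M)\delta^{\frac{4}{d-2}}M$, which is not obviously $\le\delta^{\frac{d}{d-2}}$. To obtain the sharper bound we bootstrap directly from Duhamel. Write $w=u-\tilde u$, so that
\begin{align*}
\|w\|_{S^1(J)}\lesssim\|\nabla_g F(\tilde u+w)\|_{N^0}\lesssim(\delta+\|w\|_{S^1})^{\frac{4}{d-2}}\|\nabla_g\tilde u\|_{L^{\frac{2(d+2)}{d}}(J)}+(\delta+\|w\|_{S^1})^{\frac{4}{d-2}}\|w\|_{S^1}.
\end{align*}
The factor $\|\nabla_g\tilde u\|_{L^{\frac{2(d+2)}{d}}}$ is only bounded by $M$, which is the difficulty. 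To overcome it we first subdivide $J$ into $O_M(1)$ intervals on which $\|\nabla_g\tilde u\|_{L^{\frac{2(d+2)}{d}}}\le\delta^{\frac{d-4}{d-2}\cdot\frac12}$, or use an interpolation between $Z$ and $\dot S^1$ as in the exotic Strichartz approach of \cite{TV2007} for $d\ge7$. On each piece the bootstrap gives $\|w\|\lesssim\delta^{\frac{4}{d-2}+\kappa}$ with $\kappa$ chosen so that the exponent is at least $\frac{d}{d-2}$, and shrinking $\delta$ absorbs the constants. If this fails to close, we would settle for the weaker form $\|w\|_{S^1}\le C(M)\delta^{c}$, which suffices for all later uses.

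\textbf{The case $J=(-\infty,T)$.} Here we solve the final-state problem. The Duhamel formula from $t_0=-\infty$,
\begin{align*}
u(t)=e^{it\Delta_g}\phi-i\int_{-\infty}^t e^{i(t-s)\Delta_g}F(u(s))\,ds,
\end{align*}
is solved by contraction in $\{\|u-e^{it\Delta_g}\phi\|_{S^1}\le\delta^{\frac{d}{d-2}}\}$ on the subintervals, with the same nonlinear estimates. The tail $\|\nabla_g F(u)\|_{N^0((-\infty,t))}\to0$ as $t\to-\infty$ by dominated convergence, which gives the scattering statement. The Euclidean case is identical, using Strichartz on $\mathbb{R}^d$.
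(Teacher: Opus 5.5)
Your main argument is the paper's proof. You take $\tilde u=e^{it\Delta_g}\phi$ as an approximate solution with error $e=-|\tilde u|^{\frac{4}{d-2}}\tilde u$. You bound $\|e\|_{N^1(J)}\lesssim M\delta^{\frac{4}{d-2}}$ by H\"older and Strichartz, then invoke Proposition~\ref{pr.2v6} with $\rho=1$ plus uniqueness. The paper places $\nabla_g e$ in $L^2_tL^{\frac{2d}{d+2}}_x$ rather than $L^{\frac{2(d+2)}{d+4}}_{t,x}$, which is immaterial, and it also dismisses $J=(-\infty,T)$ as a classical fixed point.

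The paper then asserts that Proposition~\ref{pr.2v6} ``concludes the proof''. You are right that this only yields $\|u-e^{it\Delta_g}\phi\|_{S^1(J)}\le C(M)M\delta^{\frac{4}{d-2}}$. That does not give $\delta^{\frac{d}{d-2}}$ once $d\ge4$; only for $d=3$, where $\frac{4}{d-2}>\frac{d}{d-2}$, can the constant be absorbed by shrinking $\delta$.

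\textbf{The gap: your repair cannot work.} The first Duhamel iterate $-i\int_{t_0}^t e^{i(t-s)\Delta_g}F(\tilde u(s))\,ds$ is a fixed function whose size is governed by $\|\nabla_g F(\tilde u)\|_{N^0(J)}$. Splitting $J$ only redistributes this quantity. Your thresholds $\delta^{\frac{d-4}{2(d-2)}}$ also make the number of pieces depend on $\delta$, not only on $M$, and the increments from the pieces add up.

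In fact the H\"older bound is sharp, so the stated exponent is false. In $\mathbb{R}^d$ take $\phi=\epsilon\chi(x/L)e^{ix\cdot\xi_0}$ with $|\xi_0|=1$, $\chi$ Schwartz and $L\gg1$. Then:
\begin{itemize}
\item $\|\nabla\phi\|_{L^2}\sim\epsilon L^{d/2}=:M$ and $\|e^{it\Delta}\phi\|_{Z}\sim\epsilon L^{\frac{d-2}{2}}=:\delta$.
\item Writing $u(t,x)=\epsilon e^{i(x\cdot\xi_0-t)}v(t/L^2,(x-2t\xi_0)/L)$, the function $v$ solves the NLS with data $\chi$ and coupling constant $L^2\epsilon^{\frac{4}{d-2}}\sim\delta^{\frac{4}{d-2}}$.
\item Hence $\|\nabla(u-e^{it\Delta}\phi)(t)\|_{L^2}\gtrsim\delta^{\frac{4}{d-2}}M$ for $t\sim L^2$. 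On that time scale the nonlinearity is essentially the phase rotation $e^{-it|\tilde u|^{4/(d-2)}}$.
\end{itemize}
For $d\ge5$ one has $\frac{4}{d-2}<\frac{d}{d-2}$, so the stated bound fails for small $\delta$. For $d=4$ the exponents coincide but a factor $\sim M$ remains, so it fails for large $M$. This holds in $\mathbb{R}^d$, and on $\mathbb{H}^d$ as well after rescaling, transplanting with $T_N$ ($N\to\infty$) and using the Euclidean approximation of Section~\ref{sec:Eucapp}.

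The correct conclusion is therefore $\|u-e^{it\Delta_g}\phi\|_{S^1(J)}\le C(M)\delta^{\frac{4}{d-2}}$, or $\le\delta^{\beta}$ for any fixed $\beta<\frac{4}{d-2}$ after shrinking $\delta$. This is what your fallback and the paper's argument actually prove. It is all that applications need, since one only uses that the difference vanishes as $\delta\to0$.

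The same correction applies to your contraction for $J=(-\infty,T)$. A ball of radius $\delta^{\frac{d}{d-2}}$ around $e^{it\Delta_g}\phi$ is not mapped into itself, because the first iterate already has size $\sim\delta^{\frac{4}{d-2}}M$. Instead, use radius $\sim C(M)\delta^{\frac{4}{d-2}}$ on a half-line $(-\infty,T']$ where $\|\nabla_g e^{it\Delta_g}\phi\|_{L^{\frac{2(d+2)}{d}}_{t,x}}$ is small, and continue to $(-\infty,T)$ with the stability argument.
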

\begin{proof} 	
Proposition \ref{pr.2v6} directly implies the first part. Define $v=e^{it\Delta_g}\phi$. Then \eqref{pr.2v61} is satisfied, and applying the Strichartz estimates yields
\begin{align*} 
			\| \nabla_g v | v|^{\frac{4}{d-2}} \|_{L_t^2 L_x^{\frac{2d}{d+2}}(J\times\mathbb{H}^d)} \leq \| v \|_{Z(J)}^{\frac{4}{d-2}} \| \nabla_g e^{it\Delta_g} \phi \|_{L_t^{\frac{2(d+2)}{d-2}} L_x^{\frac{2d(d+2)}{d^2+4}}(J\times\mathbb{H}^d)} \lesssim M \delta^{\frac{4}{d-2}}.
	\end{align*}	
Hence \eqref{pr.2v62} holds. Applying Proposition \ref{pr.2v6} with $\rho=1$ concludes the proof. The second assertion is classical and follows from a fixed-point argument.	
	\end{proof}
	
The following is a description of a Euclidean nonlinear profile.
	\begin{lemma}[Euclidean nonlinear profile description]\label{le.5v2}
	 Let $\phi \in \dot{H}^1(\mathbb{R}^d)$ and $(N_k, t_k, h_k)_k \in \mathcal{F}_e$ be given. Let $U_k$ be the solution of \eqref{eq1.1v1} such that $U_k(0) = \Pi_{t_k, h_k}(T_{N_k} \phi)$.
		\begin{enumerate} 	
			\item[(i)] First, when $k$ is large enough, $U_k$ is defined for all times, lies in $C(\mathbb{R}:H^1)$, and satisfies
			\begin{align*} 
				\| U_k \|_{L^2(\mathbb{R})} \leq 2C\bigl( E_{\mathbb{R}^d}(\phi) \bigr).
			\end{align*} 
			\item[(ii)] Next, there exists a Euclidean solution $u \in C(\mathbb{R} : \dot{H}^1(\mathbb{R}^d))$ of
			\begin{align}\label{le.5v21} 
				(i\partial_t + \Delta) u = |u|^{\frac{4}{d-2}} u
			\end{align} 
			with scattering data $\phi^{\pm\infty}$ as defined in \eqref{eq1.3v4} such that, after passing to a subsequence, the following condition is satisfied: for any $\varepsilon > 0$, there exists $T(\phi,\varepsilon)$; then for all $T \geq T(\phi,\varepsilon)$, there is an $R(\phi,\varepsilon,T)$ such that whenever $R \geq R(\phi,\varepsilon,T)$, the following estimate holds
			\begin{align}\label{le.5v22} 
				\| U_k - \tilde{u}_k \|_{S^1( |t-t_k| \leq T N_k^{-2} )} \leq \varepsilon,
			\end{align} 
			for every sufficiently large $k$, with
			\begin{align*} 
				(\pi_{h_k^{-1}} \tilde{u}_k)(t,x) = N_k^{\frac{d-2}{2}} \eta\bigl( N_k \Psi_I^{-1}(x)/R \bigr) u\bigl( N_k \Psi_I^{-1}(x),\, N_k^2 (t-t_k) \bigr).
			\end{align*} 
			In addition, taking a subsequence, we have 
			\begin{align}\label{le.5v23} 
				\|U_k\|_{L_t^{\frac{2(d+2)}{d-2}} H_x^{1,\frac{2d(d+2)}{d^2+4}} \cap L_t^{\frac{2(d+2)}{d}} H_x^{1,\frac{2(d+2)}{d}}(\mathbb{H}^d\times\{N_k^2|t-t_k|\geq T\})} \leq \varepsilon
			\end{align} 
			and for any $\pm(t-t_k)\geq T N_k^{-2}$, 
			\begin{align}\label{le.5v24} 
				\| \nabla_g \bigl( U_k(t) - \Pi_{t_k-t, h_k} T_{N_k} \phi^{\pm\infty} \bigr) \|_{L^2} \leq \varepsilon,
			\end{align} 
			for all sufficiently large $k$ depending on $\phi$, $\varepsilon$, $T$, $R$. 
		\end{enumerate}
	\end{lemma}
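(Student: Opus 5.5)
By isometry invariance ($\pi_{h_k}$ commutes with $\Delta_g$ and preserves every norm), I reduce to $h_k = I$. I would then split into two cases according to whether $N_k^2 t_k$ stays bounded or not; after passing to a subsequence, either $N_k^2 t_k \to t_\infty \in \mathbb{R}$ or $N_k^2 t_k \to \pm\infty$.

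\emph{Case $t_\infty$ finite.} Here I would let $u$ be the global Euclidean solution of \eqref{le.5v21} given by Theorem \ref{th.3v1} with data $u(-t_\infty) = e^{-it_\infty\Delta}\phi$; its scattering states are the $\phi^{\pm\infty}$. Since $\Pi_{t_k,I}T_{N_k}\phi = e^{-it_k\Delta_g}T_{N_k}\phi$, Corollary \ref{corollary} combined with the Strichartz estimate lets me replace $U_k(0)$, up to $o(1)$ in $H^1$, by $T_{N_k}(e^{-it_\infty\Delta}\phi)$ evolved in time. Applying Lemma \ref{le.3v3} (with $\rho=1$, $T_0 = T$, and $v'$ an $H^s$ approximation of $u$) then gives \eqref{le.5v22} on $|t-t_k| \le TN_k^{-2}$. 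The cutoff $\eta(\cdot/R)$ and the smoothing $e^{\Delta/R}$ are absorbed by taking $R$ large, as in the proof of Lemma \ref{le.3v3}.

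\emph{Outside the window.} Fix $T$ large so that $\|e^{it\Delta}\phi^{\pm\infty}\|$, measured in the Strichartz norms with one derivative, is at most $\varepsilon$ on $\pm t \ge T$. This is possible by scattering \eqref{eq1.3v4}. Corollary \ref{corollary} transfers this smallness to $e^{i(t-t_k)\Delta_g}T_{N_k}\phi^{\pm\infty}$ on $\pm N_k^2(t-t_k) \ge T$. At the time $t = t_k \pm TN_k^{-2}$, the approximation \eqref{le.5v22} together with $\|u(\pm T) - e^{\pm iT\Delta}\phi^{\pm\infty}\|_{\dot H^1} \le \varepsilon$ gives
\[
\bigl\|U_k(t_k \pm TN_k^{-2}) - \Pi_{\mp TN_k^{-2},I}T_{N_k}\phi^{\pm\infty}\bigr\|_{H^1} \lesssim \varepsilon.
\]
Lemma \ref{le.5v1}, applied with initial time $t_k \pm TN_k^{-2}$, then extends $U_k$ globally and yields \eqref{le.5v23} and \eqref{le.5v24}, with $\varepsilon$ replaced by a constant multiple of $\varepsilon$. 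Putting the window estimate and the two exterior pieces together gives the global bound in (i), with constant $2C(E_{\mathbb{R}^d}(\phi))$ for $k$ large.

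\emph{Case $N_k^2 t_k \to \pm\infty$.} Say $N_k^2 t_k \to +\infty$, so that the data sit at Euclidean time $-\infty$. I would take $u$ to be the solution of \eqref{le.5v21} that scatters to $e^{it\Delta}\phi$ as $t \to -\infty$; this is the wave operator, available from Theorem \ref{th.3v1} together with the Euclidean part of Lemma \ref{le.5v1}. Then $e^{-it\Delta}\phi$ has small Strichartz norm on $t \le -T$. Corollary \ref{corollary} and Lemma \ref{le.5v1} then show that $U_k$ stays close to the free evolution until time $t_k - TN_k^{-2}$ in the rescaled picture, and from there the argument of the finite case applies.

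The main obstacle is the transfer step. Corollary \ref{corollary} must be applied on unbounded time sets $N_k^{-2}I$, and it has to be combined with the $S^1$ stability of Proposition \ref{pr.2v6} without losing uniformity in $k$. The existing statements give control only for fixed $T_0$ and $k \ge N_0(T_0)$, so the constants have to be chosen in the right order: first $\varepsilon$, then $T$, then $R$, and only then $k$. This ordering should resolve it, and it is also why the statement passes to a subsequence.
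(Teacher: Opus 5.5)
Your proposal is correct and is essentially the argument of Lemma 6.2 in \cite{Ionescu2012}, which is all the paper gives for this statement: reduce to $h_k=I$, pass to a subsequence according to the limit of $N_k^2t_k$, use Lemma \ref{le.3v3} on the window $|t-t_k|\le TN_k^{-2}$, use Corollary \ref{corollary} with Lemma \ref{le.5v1} outside the window, and use the wave operator when $N_k^2t_k\to\pm\infty$. Outside the window, Lemma \ref{le.5v1} must be combined with Proposition \ref{pr.2v6}, because there the data match the linear flow only up to $O(\varepsilon)$. One attribution needs fixing: the $H^1$ replacement of $e^{-it_k\Delta_g}T_{N_k}\phi$ by $T_{N_k}(e^{-it_\infty\Delta}\phi)$, and likewise of $e^{\pm iTN_k^{-2}\Delta_g}T_{N_k}\phi^{\pm\infty}$ by $T_{N_k}(e^{\pm iT\Delta}\phi^{\pm\infty})$, comes from the $\rho=0$ case of Lemma \ref{le.3v3} (equivalently the frame equivalence in Lemma \ref{le.4v4}(i)), not from Corollary \ref{corollary}, which only controls Strichartz norms with $q>2$ and gives no fixed-time $H^1$ information.
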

	Lemma \ref{le.5v2} has been proved in \cite[Lemma 6.2]{Ionescu2012} for  $d=3$. After a careful computation, it can be extend to higher dimensions similarity.
	\begin{lemma}\label{le.5v3}
	Consider two profiles $\tilde{\phi}_{\mathcal{O}_k}$ and $\tilde{\psi}_{\mathcal{O}_k'}$ associated with orthogonal frames $\mathcal{O}$ and $\mathcal{O}'$. Let $U_k$ and $U'_k$ be the solutions of \eqref{eq1.1v1} satisfying $U_k(0) = \tilde{\phi}_{\mathcal{O}_k}$ and $U'_k(0) = \tilde{\psi}_{\mathcal{O}_k'}$. Assume further that if $\mathcal{O} \in \mathcal{F}_h$ (respectively $\mathcal{O}' \in \mathcal{F}_h$), then for both profiles, we have 	
	\[
	E(\tilde{\phi}_{\mathcal{O}_k}) < E_{\max} - \eta \quad (\text{resp. } E(\tilde{\psi}_{\mathcal{O}_k'}) < E_{\max} - \eta)
	\]
	for some $\eta > 0$. Then, up to a subsequence, the following conclusions hold
		\begin{enumerate}
			\item[(i)] for any $T \in \mathbb{R}$, we have
			\begin{align}\label{le.5v31}
				\left\langle \nabla_g U_k(T), \nabla_g U_k'(T) \right\rangle_{L^2\times L^2(\mathbb{H}^d)} \to 0, \quad \text{as } k\to\infty.
			\end{align}
			
			\item[(ii)]
			\begin{align}\label{le.5v32}
				\left\| U_k \nabla_g U_k' \right\|_{L_t^{\frac{d+2}{d-2}} L_x^{\frac{d(d+2)}{d^2-d+2}}(\mathbb{R}\times\mathbb{H}^d)} + \left\| U_k \nabla_g U_k' \right\|_{L_{t,x}^{\frac{d+2}{d-1}} \cap L_{t,x}^{\frac{d+2}{2(d-2)}}(\mathbb{R}\times\mathbb{H}^d)} 
			 + \left\| \nabla_g U_k \cdot \nabla_g U_k' \right\|_{L_{t,x}^{\frac{d+2}{d}}(\mathbb{R}\times\mathbb{H}^d)} \to 0 
			\end{align}
		\end{enumerate}
		$ \text{as } k\to\infty$.
	\end{lemma}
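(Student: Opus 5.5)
We prove both parts by reducing, through the nonlinear profile descriptions, to explicit model objects attached to each frame. Every Euclidean nonlinear profile is described by Lemma \ref{le.5v2}. Every hyperbolic nonlinear profile with energy below $E_{\max}-\eta$ is global, with $\|U_k\|_{Z(\mathbb{R})}\lesssim S(E_{\max}-\eta)$, and equals $\Pi_{t_k,h_k}$ applied to a fixed solution $W$ with $W\in S^1(\mathbb{R})$. The orthogonality \eqref{def.4v11} then forces the two models to separate in scale, time or space. We treat the three cases in turn: (a) both frames hyperbolic, (b) one Euclidean and one hyperbolic, (c) both Euclidean.

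\textbf{Approximation step.} Fix $\varepsilon>0$. We approximate each nonlinear solution in $S^1(\mathbb{R})$, up to an $O(\varepsilon)$ error, by a compactly supported model.
\begin{itemize}
\item For a hyperbolic profile, take $\pi_{h_k}W^{1}(t-t_k)$ with $W^1\in C_0^\infty(\mathbb{R}\times\mathbb{H}^d)$.
\item For a Euclidean profile, use \eqref{le.5v22}. Inside $|t-t_k|\le TN_k^{-2}$ the solution is $\varepsilon$-close to $\tilde u_k$, a rescaled Euclidean solution which we may further replace by a $C_0^\infty$ function $v$ supported in $\{|x|\le R,\ |t|\le T\}$. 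Outside this window, \eqref{le.5v23} gives smallness of all the relevant norms.
\end{itemize}
By H\"older's inequality and the uniform $S^1$ bounds (which control every norm in \eqref{le.5v32} by Sobolev embedding), all error terms in (ii) are $O(\varepsilon)$. It therefore suffices to prove (ii) for the model pairs.

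\textbf{Part (ii) for model pairs.} Each model pair is, after a change of variables, a product of two compactly supported functions living on space-time sets $S^{R}_{N_k,h_k}$ and $S^{R}_{N'_k,h'_k}$.
\begin{itemize}
\item If $N_k^2|t_k-t_k'|\to\infty$ or $N_kd(h_k\cdot\mathbf 0,h_k'\cdot\mathbf 0)\to\infty$ while the scales are comparable, the supports are disjoint for large $k$, so the products vanish identically.
\item If $N_k/N_k'\to\infty$ (this includes the Euclidean/hyperbolic case, where $N_k'=1$), we estimate the product by H\"older on the small set. For example, in $\|U_k\nabla_gU_k'\|_{L^{\frac{d+2}{d-1}}_{t,x}}$ we put $\nabla_gU_k'$ in $L^\infty$, which is of size $(N_k')^{d/2}$, and put $U_k$ in $L^{\frac{d+2}{d-1}}$ of the set $S^{R}_{N_k,h_k}$. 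The scaling gives a factor $(N_k'/N_k)^{\theta}$ with $\theta>0$, because the exponent $\frac{d+2}{d-1}$ is strictly below the scaling-critical exponent.
\item The other Lebesgue pairs in \eqref{le.5v32}, and the term $\nabla_gU_k\cdot\nabla_gU_k'$, are handled the same way. When the critical exponent is reached exactly, we instead gain from the small measure of the intersection of $S^{R}_{N_k,h_k}$ with the region where the coarser profile is concentrated, using its absolute continuity.
\end{itemize}

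\textbf{Part (i).} Fix $T$. We use the description of each solution at time $T$.
\begin{itemize}
\item If $N_k^2|T-t_k|\to\infty$, then \eqref{le.5v24} says $U_k(T)$ is $\varepsilon$-close in $\dot H^1$ to the linear profile $\Pi_{t_k-T,h_k}T_{N_k}\phi^{\pm\infty}$.
\item Otherwise, after a subsequence, $N_k^2(T-t_k)\to s$, and \eqref{le.5v22} shows that $U_k(T)$ is close to the Euclidean profile of $u(s)$ in the frame $(N_k,T,h_k)$.
\item For hyperbolic frames, $U_k(T)=\Pi_{t_k-T,h_k}$ applied to an $H^1$ function, up to $o(1)$.
\end{itemize}
In all cases $U_k(T)$ and $U_k'(T)$ are, up to $\varepsilon$, profiles attached to the frames $(N_k,t_k-T,h_k)$ and $(N_k',t_k'-T,h_k')$. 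These frames are still orthogonal, since the shift by $T$ does not change $|t_k-t_k'|$. Lemma \ref{le.4v4}(ii) then gives the vanishing of the inner product, and letting $\varepsilon\to0$ proves \eqref{le.5v31}.

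\textbf{Main obstacle.} The hard step is the scale-separated case of (ii) at the endpoint exponents, especially $L^{\frac{d+2}{d}}_{t,x}$ for $\nabla_gU_k\cdot\nabla_gU_k'$, where H\"older gives no power gain. There we would first try to truncate the coarse profile in the window of the fine one: its $L^{\frac{2(d+2)}{d}}_{t,x}$ norm on a set of measure $\to0$ tends to $0$. This requires uniform integrability of $|\nabla_g W|^{\frac{2(d+2)}{d}}$, which holds for the fixed model.
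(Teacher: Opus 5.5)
Part (i) is the paper's argument. At the fixed time $T$ each solution is replaced, up to $\varepsilon$, by a linear profile in a time-shifted frame; the shift preserves orthogonality; and Lemma~\ref{le.4v4}(ii) concludes. Your version is more careful than the paper's. The paper only appeals to Lemma~\ref{le.3v3}, which describes the solution on windows of length $O(N_k^{-2})$ around the concentration time. You also cover $N_k^2|T-t_k|\to\infty$ via \eqref{le.5v24} and the scattering of the hyperbolic model $W$. One slip: when $N_k^2(T-t_k)\to s$, the approximating profile $\pi_{h_k}T_{N_k}\bigl(\eta(\cdot/R)\,u(s)\bigr)$ lives in the frame $(N_k,0,h_k)$, which is equivalent to $(N_k,t_k-T,h_k)$. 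The frame $(N_k,T,h_k)$ you wrote is orthogonal to both when $T\neq 0$.

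For part (ii) your route is genuinely different. The paper builds no models. It uses the two concentration properties in \eqref{eq1.5v19}: smallness of $U_k$ off $S^R_{N_k,t_k,h_k}$, and uniform smallness on every finer cylinder $S^\delta_{N_k,S,h}$. When $N_k/N'_k\to\infty$ one has $S^R_{N_k,t_k,h_k}\subset S^\delta_{N'_k,t_k,h_k}$ for large $k$, and H\"older in critical Strichartz norms disposes of all the norms at once, without a rate.

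You instead replace each solution by a rescaled $C_0^\infty$ model and put the coarser factor in $L^\infty$. This gives an explicit gain $(N'_k/N_k)^{\theta}$ and reduces the comparable-scale case to disjoint supports, at the cost of a density step. That step must be carried out in the finite-exponent norms entering your H\"older bounds ($Z$, $\nabla_g(\cdot)\in L^{\frac{2(d+2)}{d}}_{t,x}$, $\nabla_g(\cdot)\in L^{\frac{2(d+2)}{d-2}}_tL^{\frac{2d(d+2)}{d^2+4}}_x$), not in $S^1(\mathbb{R})$. By energy conservation, a compactly supported model cannot approximate a global solution in $L^\infty_tH^1_x$.

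Your ``main obstacle'' is not one. For the models, the fine one has $\|\nabla_g(\cdot)\|_{L^{\frac{d+2}{d}}_{t,x}}\sim N_k^{-d/2}$ and the coarse one has $\|\nabla_g(\cdot)\|_{L^\infty}\sim (N'_k)^{d/2}$. So the $L^{\frac{d+2}{d}}$ term gains $(N'_k/N_k)^{d/2}$ exactly like the others. The fallback you describe (smallness of the coarse solution on shrinking sets) is precisely the paper's mechanism, the second line of \eqref{eq1.5v19}.

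Finally, ``handled the same way'' does not cover the exponent $\frac{d+2}{2(d-2)}$. It is not scale-invariant, and for $d\ge 7$ it is below $1$. Since every Strichartz factor has time exponent at least $2$, H\"older cannot bound your error terms there. The paper does not treat this term either, so this is a defect of the statement rather than of your approach.
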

	
	\begin{proof}
		(i) We choose $\epsilon_0 > 0$ to be sufficiently small. First, consider the hyperbolic frame case. Let $W$ be the solution of \eqref{eq1.1v1} with initial condition $W(0) = \phi$. By the invariance of \eqref{eq1.1v1} under the $\pi$-action, we have $U_k(0) = \pi_{h_k}(W(T))$. Next, assume that $\mathcal{O} = (N_k, h_k)_k$ with $N_k \to \infty$ and that $\mathcal{O}$ is equivalent to $(N_k, h_k)_k$. By applying Lemma \ref{le.3v3}, we have that there exists some $R > 0$ such that	
\begin{align*} 
\| U_k(T) - \tilde{u}_k(T) \|_{H^1(\mathbb{H}^d)} \leq \frac{\epsilon_0}{2}, 
\end{align*} 
for sufficiently large $k$. More specifically, when $k$ is large enough, 
\begin{align*} 
\left\| U_k(T) - \pi_{h_k} T_{N_k}\bigl( \eta(\frac{\cdot}{R}) u(T_0,\cdot) \bigr) \right\|_{H^1(\mathbb{H}^d)} \leq \epsilon_0.
\end{align*} 
Thus, we can substitute $U_k(T)$ and $U_k'(T)$ with a linear profile in new frames $\mathcal{O}_T$ and $\mathcal{O}_T'$ (up to an error $\epsilon_0$) such that $\mathcal{O}_T$ and $\mathcal{O}_T'$ are orthogonal if and only if $\mathcal{O}$ and $\mathcal{O}'$ are orthogonal. Then \eqref{le.5v31} is a consequence of Lemma \ref{le.4v4} (ii).
		
		(ii) Let us demonstrate that the first norm in \eqref{le.5v32} decays; similar reasoning will apply to the second norm. Fix an arbitrary $\epsilon > 0$. Then, by Lemma \ref{le.3v3} when $U_k$ arises from a Euclidean frame, we deduce
		\begin{align*} 
			\| U_k \|_{S^1(\mathbb{R})} + \| U_k' \|_{S^1(\mathbb{R})} \leq M < \infty, 
		\end{align*} 
		and that there exist $R$ and $\delta$ such that
		\begin{align}\label{eq1.5v19} 
			&\|\nabla_g U_k\|_{L_t^{\frac{2(d+2)}{d-2}} L_x^{\frac{2d(d+2)}{d^2+4}} \cap L_{t,x}^{\frac{2(d+2)}{d}}\bigl((\mathbb{R}\times\mathbb{H}^d)\setminus S_{N_k,t_k,h_k}^{R}\bigr)} + \|U_k\|_{L_{t,x}^{\frac{2(d+2)}{d-2}}\bigl((\mathbb{R}\times\mathbb{H}^d)\setminus S_{N_k,t_k,h_k}^{R}\bigr)} \leq \epsilon,\nonumber\\ 
			&\sup_{\substack{S\in\mathbb{ R }\\h\in SO(d,1)} } \Bigl( \|\nabla_g U_k\|_{L_t^{\frac{2(d+2)}{d-2}} L_x^{\frac{2d(d+2)}{d^2+4}} \cap L_{t,x}^{\frac{2(d+2)}{d}}(S_{N_k,S,h}^{\delta})} + \|U_k\|_{L_{t,x}^{\frac{2(d+2)}{d-2}}(S_{N_k,S,h}^{\delta})} \Bigr) \leq \epsilon, 
		\end{align} 
		where
		\begin{align}\label{eq1.5v20} 
			S_{N,T,h}^{a} := \left\{ (t,x) \in \mathbb{R} \times \mathbb{H}^d : |t-T| \leq N^{-2}a^2 \text{ and } d_g(h^{-1}x,\mathbf{0}) \leq N^{-1}a \right\}. 
		\end{align} 
		The same $R$ and $\delta$ also yield a similar claim for $U'_k$. 
		
		In the case where $\frac{N_k}{N'_k} \to \infty$, for sufficiently large $k$, we obtain
		\begin{align*} 
			\left\| U_k \nabla_g U_k' \right\|_{L_t^{\frac{d+2}{d-2}} L_x^{\frac{d(d+2)}{d^2-d+2}}} 
			\leq& \left\| U_k \right\|_{L_{t,x}^{\frac{2(d+2)}{d-2}}} \left\| \nabla_g U_k' \right\|_{L_t^{\frac{2(d+2)}{d-2}} L_x^{\frac{2d(d+2)}{d^2+4}}(S_{N_k',t_k,h_k}^{\delta})} \\ 
			&+ \left\| U_k \right\|_{L_{t,x}^{\frac{2(d+2)}{d-2}}\bigl((\mathbb{R}_+\times\mathbb{H}^d)\setminus S_{N_k,t_k,h_k}^{R}\bigr)} \left\| \nabla_g U_k' \right\|_{L_t^{\frac{2(d+2)}{d-2}} L_x^{\frac{2d(d+2)}{d^2+4}}} \lesssim_{M} \epsilon . 
		\end{align*} 
		The situation where $\frac{N_k'}{N_k} \to \infty$ is analogous. Otherwise, we may assume that $C^{-1} \leq \frac{N_k}{N'_k} \leq C$ for all $k$. Then, for sufficiently large $k$, we can ensure that $S_{N_k,t_k,h_k}^R \cap S_{N_k',t_k',h_k'}^R = \emptyset$. Thus, by \eqref{eq1.5v19}, we obtain as before that
		\begin{align*} 
			\left\| U_k \nabla_g U_k' \right\|_{L_t^{\frac{d+2}{d-2}} L_x^{\frac{d(d+2)}{d^2-d+2}}} \lesssim_{M} \epsilon . 
		\end{align*} 
		So, in all cases, 
		\begin{align*} 
			\lim_{k\to\infty} \left\| U_k \nabla_g U_k' \right\|_{L_t^{\frac{d+2}{d-2}} L_x^{\frac{d(d+2)}{d^2-d+2}}} \lesssim_{M} \epsilon . 
		\end{align*} 
		Thus, the first term in \eqref{le.5v32} converges to 0.
	\end{proof}
	
Next, we turn to prove Theorem \ref{th1v1}.
	\begin{proof}[Proof of Theorem \ref{th1v1}]
	Suppose, for the sake of contradiction, that $E_{\max} < +\infty$. We first demonstrate that there exists a solution $u \in C((-T_*, T^*):H^1)$ to \eqref{eq1.1v1} satisfying
	\begin{align}\label{eq1.5v21} 
		E(u) = E_{\max}, \qquad \| u \|_{Z(-T_*,0)} = \| u \|_{Z(0,T^*)} = +\infty . 
	\end{align} 
	By our assumption, we can find a sequence of solutions $u_k$ defined on intervals $I_k = (-T_k, T^k)$ with $E(u_k) \leq E_{\max}$ and $\|u_k\|_{Z(I_k)} \to +\infty$. Choosing appropriate times $t_k \in (-T_k, T^k)$ fits the hypotheses of Proposition \ref{pr.2v8}. Hence, after passing to a subsequence, there exist isometries $h_k \in \mathbb{G}$ such that $\pi_{h_k}(u_k(t_k)) \to w_0$ strongly in $H^1$. 
	
	Now let $u \in C((-T_*,T^*) : H^1(\mathbb{H}^d))$ be the maximal solution of \eqref{pr.2v51} with initial data $w_0$, as stated in Proposition \ref{pr.2v6}. By the stability result of Proposition \ref{pr.2v6}, if $\|u\|_{Z(0,T^*)} < +\infty$, then we would have $T^* = +\infty$ and $\|u_k\|_{Z(I_k,+\infty)} \leq C(\|u\|_{Z(0,+\infty)})$, which is impossible. A similar argument shows $\|u\|_{Z(-T_*,0)} = +\infty$, establishing \eqref{eq1.5v21}. 
	
	Next, we argue that the solution $u$ obtained above can be extended globally. By Proposition \ref{pr.2v5}, it is enough to find a $\delta > 0$ such that for every time $t \in (-T_*, T^*)$, 
	\begin{align*} 
		\| u \|_{Z((t-\delta, t+\delta) \cap (-T_*, T^*))} \leq 1 . 
	\end{align*} 	
If this fails, there would exist a sequence $\delta_k \to 0$ and times $t_k \in (-T_* + \delta_k, T^* - \delta_k)$ with
\begin{align}\label{eq1.5v22} 
		\| u \|_{Z(t_k-\delta_k, t_k+\delta_k)} \geq 1 . 
	\end{align} 
	We apply Proposition \ref{pr.2v8} to the constant sequence $u_k = u$. Up to a subsequence, we obtain isometries $h_k \in \mathbb{G}$ such that $\pi_{h_k}(u(t_k)) \to w$ strongly in $H^1$ for some $w$. Let $z$ be the maximal nonlinear solution with initial data $w$. The local theory provided by Proposition \ref{pr.2v5} yields a $\delta > 0$ such that	
	\begin{align*} 
		\| z \|_{Z(-\delta,\delta)} \leq \frac{1}{2}. 
	\end{align*} 
	Proposition \ref{pr.2v6} then implies $\|u\|_{Z(t_k-\delta_k,t_k+\delta_k)} \leq 1/2 + o_k(1)$, contradicting \eqref{eq1.5v22}. Thus, we have shown that when $E_{\max} < \infty$, there exists a global solution $u \in C(\mathbb{R}:H^1)$ of \eqref{eq1.1v1} such that
	\begin{align*} 
		E(u) = E_{\max}, \qquad \| u \|_{Z(-\infty,0)} = \| u \|_{Z(0,\infty)} = +\infty . 
	\end{align*} 	
	We now claim that for some $\delta > 0$,	
	\begin{align*} 
		\left\| u(t) \right\|_{L^{\frac{2d}{d-2}}} \geq \delta \quad \text{for all } t . 
	\end{align*} 
	Indeed, if this were false, we could pick a sequence $t_k \in (0, \infty)$ with $u(t_k) \to 0$ in $L^{\frac{2d}{d-2}}$. Applying Proposition \ref{pr.2v8} to this sequence and passing to a subsequence yields isometries $h_k \in \mathbb{G}$ such that $\pi_{h_k}(u(t_k)) \to w$ in $H^1$ with $w=0$. But this contradicts the conservation of energy. 
	
	Finally, we have reached a contradiction with the Morawetz estimate \eqref{pr.2v71}. Hence, $E_{\max} = +\infty$ as desired.
\end{proof}

\end{document}